\begin{document}

\def\hom{\mathop{\cH om\skp}}
\def\ext{\mathop{\cE xt\skp}}

\newtheorem{prop}{Proposition}[section]
\newtheorem{theo}{Theorem}[section]
\newtheorem{lemm}{Lemma}[section]
\newtheorem{coro}{Corollary}[section]
\newtheorem{rema}{Remark}[section]
\newtheorem{exam}{Example}[section]
\newtheorem{defi}{Definition}[section]
\newtheorem{conj}{Conjecture}[section]


\def\begeq{\begin{equation}}
\def\endeq{\end{equation}}
\def\begarr{\begin{array}{rll}}
\def\endarr{\end{array}}
\def\earr{\begin{equation}\begin{array}{rll}}
\def\eearr{\end{array}\end{equation}}

\def\and{\quad{\rm and}\quad}
\def\dd{\bf $\diamond$}

\def\U{\mathcal{U}}
\def\V{\mathcal{V}}
\def\W{\mathcal{W}}

\def\bl{\bigl(}
\def\br{\bigr)}
\def\lbe{_{\beta}}
\def\bN{{\mathbf N}}
\def\bs{{\mathbf s}}
\def\dist{{\mathbf dist}}
\def\<{\langle}
\def\>{\rangle}

\def\clo{\mathop{\rm cl\skp}}
\def\cdott{\!\cdot\!}
\def\cu{^{{\rm cu}}}

\def\lab{\label}

\def\Dint{\displaystyle\int}
\def\Dfrac{\displaystyle\frac}
\def\a{\alpha}
\def\b{\beta}
\def\r{\gamma}
\def\di{\displaystyle}

\title[Differential Harnack inequalities]{ Differential Harnack inequalities on Riemannian manifolds I : linear heat equation}

\author{Junfang Li}
\address{Department of Mathematics\\
         University of Alabama at Birmingham\\
         Birmingham, AL 35294, USA.}
\email{jli@math.uab.edu}
\author{Xiangjin Xu }
\address{Department of Mathematical Sciences\\
         Binghamton University\\
         Binghamton, New York, 13902, USA.}
\email{xxu@math.binghamton.edu}

\begin{abstract}
In the first part of this paper, we get new Li-Yau type gradient
estimates for positive solutions of heat equation on Riemmannian
manifolds with $Ricci(M)\ge -k$, $k\in \mathbb R$. As applications,
several parabolic Harnack inequalities are obtained and they lead to
new estimates on heat kernels of manifolds with Ricci curvature
bounded from below. In the second part, we establish a Perelman type
Li-Yau-Hamilton differential Harnack inequality for heat kernels on
manifolds with $Ricci(M)\ge -k$, which generalizes a result of L. Ni
\cite{NL1,NL4}. As applications, we obtain new Harnack inequalities and
heat kernel estimates on general manifolds. We also obtain various entropy monotonicity formulas for all compact Riemannian manifolds.
\end{abstract}
\thanks{Research of the second author was supported in part by the NSF grant DMS-0602151 and DMS-0852507.}

\maketitle
\tableofcontents
\section{\bf Introduction and main results}
Let $(M^n,g)$ be a complete Rimannian manifold. In the fundamental paper \cite{LY}, Li and Yau studied the heat equation solutions
\begeq\label{heat equ}
\partial_t u=\Delta_{g} u
\endeq
on general Riemannian manifolds. The results in \cite{LY} has tremendous impact in the field of geometric analysis. One of the fundamental results is the following important gradient estimates for heat equations.\\

\noindent{\bf Theorem} \label{LY}{(\bf Li-Yau \cite{LY})}\quad{\em
Let $(M,g)$ be a complete Riemannian manifold. Assume that on the ball $B_{2R}$, $Ric(M)\ge -k$. Then for any $\alpha>1$, we have that
\earr\label{LY equ1}
\displaystyle\sup_{B_R}\Big( \frac{|\nabla u|^2}{u^2} - \alpha\frac{u_t}{u} \Big)\le&\frac{C\alpha^2}{R^2}\Big(\frac{\alpha^2}{\alpha^2-1}+\sqrt k R \Big)+\frac{n\alpha^2k}{2(\alpha-1)}+\frac{n\alpha^2}{2t}.
\eearr
Moveover, when $(M,g)$ has nonnegative Ricci curvature, letting $R\rightarrow \infty$ and $\alpha\rightarrow 1$, (\ref{LY equ1}) gives the sharp estimate (a Hamilton-Jacobi inequality):
\earr\label{LY equ2}
 \frac{|\nabla u|^2}{u^2} - \frac{u_t}{u}\le\frac{n}{2t}.
\eearr
}

When $Ricci(M)\ge 0$, (\ref{LY equ2}) gives a clean sharp estimate.
In general, on a complete Riemannian manifold, if $Ricci(M)\ge -k$,
by letting $R\rightarrow \infty$ in (\ref{LY equ2}), one obtains
$$
\frac{|\nabla u|^2}{u^2} -
\alpha\frac{u_t}{u}\le\frac{n\alpha^2k}{2(\alpha-1)}+\frac{n\alpha^2}{2t}.$$

In \cite{D}, Davies improved this estimate to
\begin{equation}\label{Davies}
\frac{|\nabla u|^2}{u^2} - \alpha\frac{u_t}{u}
\le\frac{n\alpha^2k}{4(\alpha-1)}+\frac{n\alpha^2}{2t}.
\end{equation}
Let's denote the right hand side to be $\varphi(t)$. Clearly, when $t$ is big, $\varphi(t)$
converges to $\frac{n\alpha^2k}{4(\alpha-1)}$ which is greater or
equal to $nk$ for any $\alpha>1$. Namely, the optimal estimate for $t$ large one
can get from this estimate is $nk$, which can be obtained by choosing $\alpha=2$. For small time $t$, the dominant term of
$\varphi(t)$ is $\frac{n\alpha^2}{2t}$. By checking examples for
heat kernels on hyperbolic spaces, one finds that when $t$ is small,
the leading term should be $\frac{n}{2t}$. This suggests one should
choose $\alpha$ close to $1$ and when time $t$ is small, the sharp
form is $\frac{n}{2t}$. In (\ref{LY equ2}) and Davies' improved
estimate, if one lets $\alpha \rightarrow 1$, then $\varphi(t)$ will
blow up. This phenomena suggests that there is still room to improve
the estimate.
\\

It is a long time question : can one find a sharp (explicit) form for general manifolds with $Ricci(M)\ge-k$? (see Problem 10.5 in book \cite{Ce}, page 393.) In this paper, we make some progress for this question.  \\

The first main theorem in this paper is the following local gradient
estimate.
\begin{theo}\label{main thm}
Let $(M,g)$ be a complete Riemannian manifold. Let $B_{2R}$ be a
geodesic ball centered at $O\in M$. We assume $Ricci(B_{2R})\ge-k$
with $k\ge0$. If $u$ is a positive solution of the heat equation
\[
(\Delta-\partial_t)u(x,t)=0\quad {\rm on}\quad
B_{2R}\times(0,T],
\]
where $0<T\le \infty$ and let $f=\ln u$, then we get the following Li-Yau
type gradient estimate in $B_R$ \earr\label{main thm equ2}
\sup_{B_R}( |\nabla f|^2 - \alpha f_t-\varphi)(x,t)\le
\frac{nC}{R^2}+\frac{nC\sqrt k}{R}\coth(\sqrt k\cdot
R)+\frac{n^2C}{R^2\tanh(kt)}. \eearr where $C$ depends on $n$, 
$\alpha(t)=1+\frac{\sinh(kt)\cosh(kt)-kt}{\sinh^2(kt)}$ and
$\varphi(t)=\frac{nk}{2}\big[\coth
(kt)+1\big]$.\\

Moveover, letting $R\rightarrow \infty$, if  $Ric(M)\ge -k$ on the
complete manfiold, then \earr\label{main thm equ3}
 |\nabla f|^2 - (1+\frac{\sinh(kt)\cosh(kt)-kt}{\sinh^2(kt)}) f_t\le &\frac{nk}{2}\big[\coth (kt)+1\big] .\\
\eearr
\end{theo}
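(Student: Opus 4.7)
The plan is to run the Li--Yau maximum principle argument on the quantity
\[
F(x,t) \;:=\; |\nabla f|^2 \;-\; \alpha(t)\, f_t \;-\; \varphi(t),
\]
where $f=\ln u$ satisfies $f_t=\Delta f+|\nabla f|^2$, and to choose the time--dependent coefficients $\alpha(t),\varphi(t)$ so that the resulting algebraic inequality at a maximum forces $F\le 0$. For the first step I would apply the Bochner--Weitzenb\"ock formula and the Kato inequality $|\nabla^2 f|^2\ge\tfrac{1}{n}(\Delta f)^2$, together with the assumption $Ric\ge -k$, to obtain
\[
(\Delta-\partial_t)|\nabla f|^2 \;\ge\; \tfrac{2}{n}(\Delta f)^2 - 2k|\nabla f|^2 - 2\langle\nabla f,\nabla|\nabla f|^2\rangle,
\]
together with the identity $(\Delta-\partial_t)f_t=-2\langle\nabla f,\nabla f_t\rangle$. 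Combining these gives
\[
(\Delta-\partial_t)F \;\ge\; \tfrac{2}{n}(f_t-|\nabla f|^2)^2 - 2k|\nabla f|^2 - 2\langle\nabla f,\nabla F\rangle + \alpha'(t)f_t + \varphi'(t).
\]

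The core of the argument is then to substitute $|\nabla f|^2 = F+\alpha f_t+\varphi$ at an interior maximum (where $\nabla F=0$, $\Delta F\le 0$, $F_t\ge 0$) and collect powers of $y:=f_t$. One obtains a quadratic inequality
\[
(\alpha-1)^{2} y^{2} + \Bigl[\tfrac{4(\alpha-1)(F_0+\varphi)}{n}-2k\alpha+\alpha'\Bigr]\tfrac{n}{2}\,y + \tfrac{n}{2}\Bigl[\tfrac{2(F_0+\varphi)^{2}}{n}-2k(F_0+\varphi)+\varphi'\Bigr] \;\le\; 0,
\]
where $F_0:=F_{\max}$. The main technical step is now to verify that the stated ansatz
\[
\varphi(t)=\tfrac{nk}{2}[\coth(kt)+1],\qquad \alpha(t)=1+\coth(kt)-\tfrac{kt}{\sinh^{2}(kt)}
\]
solves the two consistency ODEs
\[
\tfrac{2\varphi^{2}}{n}-2k\varphi+\varphi'=0,\qquad \tfrac{4(\alpha-1)\varphi}{n}-2k\alpha+\alpha'=0.
\]
These identities collapse the $y$--quadratic to $(\alpha-1)^{2}y^{2}+2(\alpha-1)F_0\,y+\tfrac{n}{2}\cdot\tfrac{2F_0}{n}(2\varphi+F_0-nk)\le 0$. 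Assuming $F_0>0$, the leading coefficient is strictly positive, so solvability in $y$ forces the discriminant to be nonnegative, which simplifies to $\varphi\le \tfrac{nk}{2}$; this contradicts $\varphi(t)>\tfrac{nk}{2}$, proving $F_{\max}\le 0$. Letting $R\to\infty$ (and invoking a standard completeness/cutoff argument to ensure the maximum is attained in the global case) gives (\ref{main thm equ3}).

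For the localized version (\ref{main thm equ2}) I would introduce a standard radial cutoff $\psi=\eta(d(x,O)/R)$, with $\eta\equiv 1$ on $[0,1]$ and supported in $[0,2]$, and apply the maximum principle to $\psi F$ on $B_{2R}\times(0,T]$. Laplacian comparison under $Ric\ge -k$ yields $\Delta d\le (n-1)\sqrt{k}\coth(\sqrt{k}\,d)$, which converts $\Delta\psi$ and $|\nabla\psi|^2/\psi$ into the boundary contributions $\tfrac{C}{R^{2}}$ and $\tfrac{C\sqrt{k}}{R}\coth(\sqrt{k}R)$. The extra cross terms $-2\langle\nabla\psi,\nabla F\rangle/\psi$ appearing at the maximum point must be absorbed using Cauchy--Schwarz against the good $(\alpha-1)^{2}y^{2}$ term, and the time--dependent factor $1/\tanh(kt)$ enters exactly from the $\alpha(t)-1\sim 1/\tanh(kt)$ scaling at small $t$, producing the $\tfrac{n^{2}C}{R^{2}\tanh(kt)}$ term.

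\emph{Main obstacle.} The non-routine part is finding and verifying the precise ansatz for $\alpha(t),\varphi(t)$ that simultaneously kills the $y$--independent and $y$--linear terms above; once this system of ODEs is solved explicitly in closed form (via $\coth(kt)$ and $kt/\sinh^{2}(kt)$), everything else is the standard Li--Yau / Hamilton maximum principle bookkeeping. The secondary technical point is tracking the small-$t$ behavior carefully enough in the cutoff estimate to recover exactly the stated $\tanh(kt)^{-1}$ blow-up rather than the cruder $t^{-1}$ appearing in classical Li--Yau.
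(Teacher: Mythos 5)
Your global (compact, or $R\to\infty$) argument is correct and is, up to repackaging, the paper's own: your two ODEs for $\alpha,\varphi$ are exactly the paper's system from Lemma~2.2, and after they are imposed your residual quadratic in $y=f_t$ collapses, via $F_0+(\alpha-1)y=-(\Delta f+\varphi)$, to $(\Delta f+\varphi)^2+(2\varphi-nk)F_0\le 0$. This is precisely the sum of the completed square and the $2k\coth(kt)F$ term that the paper obtains (using $nk\coth(kt)=2\varphi-nk$), so your ``discriminant nonnegative'' step is the same contradiction, phrased less directly. So far, so good.

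The localized estimate~\eqref{main thm equ2} is where the proposal has a genuine gap. First, a technical error: you write ``$\alpha(t)-1\sim 1/\tanh(kt)$'' for small $t$, but in fact $\alpha(t)-1=\frac{\sinh(kt)\cosh(kt)-kt}{\sinh^2(kt)}\sim\frac{2}{3}kt\sim\frac{2}{3}\tanh(kt)$, so $\alpha-1\to 0$ and it is $1/(\alpha-1)$, not $\alpha-1$, that blows up like $1/\tanh(kt)$. More importantly, your plan is to apply the maximum principle directly to $\psi F$. At the (positive) maximum point $(x_0,t_0)$ of $\psi F$, after Cauchy--Schwarz you get a bound of the schematic form
\[
(\psi F)(x_0,t_0)\;\le\;\frac{nC\,\alpha^2(t_0)}{R^2}+\frac{nC\,\alpha^2(t_0)\sqrt k}{R}\coth(\sqrt k R)+\frac{n^2 C}{R^2}\cdot\frac{\alpha^4(t_0)}{\alpha(t_0)-1},
\]
and the coefficient $\alpha^4(t_0)/(\alpha(t_0)-1)$ is evaluated at the \emph{unknown} maximum time $t_0\in(0,T]$; it tends to $+\infty$ as $t_0\to 0$, so the bound is vacuous. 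This is exactly what the paper's multiplier $\beta(t)=\tanh(kt)$ is for: working with $G=\beta F$ replaces this coefficient by $\beta(t_0)\alpha^4(t_0)/(\alpha(t_0)-1)$, which is uniformly bounded on $(0,\infty)$ (it has finite limits at $0$ and $\infty$), and since $\beta$ is increasing one may replace $\beta(t_0)$ by $\beta(T)$ in the remaining terms; dividing $G=\beta F$ by $\beta(T)$ at the end is what produces the stated $\tfrac{n^2C}{R^2\tanh(kt)}$. Without this step (and the accompanying technical Lemma~2.4, which shows that $4(\alpha-1)\varphi-n\alpha^2\bigl(2k\coth(kt)-\beta'/\beta\bigr)\le 0$, i.e.\ that the zeroth-order coefficient of $\phi G$ can be dropped), the cutoff argument does not close. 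Your sentence about absorbing the cross term ``against the good $(\alpha-1)^2y^2$ term'' is also slightly misaimed: the cross term $-2\langle\nabla\psi,\nabla F\rangle$ brings in $|\nabla f|$, and is absorbed against the $|\nabla f|^2$ coefficient $\tfrac{4\phi(\alpha-1)}{n\alpha^2}(\phi G)$, not the $f_t^2$ piece. In short: the ansatz, the ODEs, and the global argument are all right; what is missing is the auxiliary factor $\beta(t)=\tanh(kt)$ that tames the $1/(\alpha-1)$ blow-up at small time and is responsible for the exact form of the last term in~\eqref{main thm equ2}.
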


\begin{rema}
When $Ricci\ge0$, letting $k\rightarrow0$, we recovered the celebrated sharp Li-Yau
gradient estimates. Our estimate also explains why in the Li-Yau
gradient estimates for general case (\ref{LY equ1}), one tends to
assume the blow-up parameter $\alpha>1$. The reason is one can view
the parameter $\alpha$ as a function of time $t$, i.e.
$\alpha(t)=1+\frac{\sinh(kt)\cosh(kt)-kt}{\sinh^2(kt)}$, which
indeed is greater than $1$ for all $t>0$ in case of $Ricci(M)\ge-k$
with $k>0$. Since we only assume $k\in \mathbb R$ in the proof, our
estimate in fact also works for Ricci positive case as well. However, in the positive
Ricci case,
$\alpha(t)=1+\frac{\sinh(kt)\cosh(kt)-kt}{\sinh^2(kt)}<1$.
\end{rema}

A linearized version of Theorem \ref{main thm} is the
following.
\begin{theo}\label{linear thm}
Let $(M,g)$ be a complete Riemannian manifold. Let $B_{2R}$ be a
geodesic ball centered at $O\in M$. We assume $Ricci(B_{2R})\ge-k$
with $k\ge0$. If $u$ is a positive solution of the
\[
(\Delta-\partial_t)u(x,t)=0\quad {\rm on}\quad
B_{2R}\times(0,T],
\]
where $0<T\le \infty$ and let $f=\ln u$, then we get the following
Li-Yau type gradient estimate in $B_R$ \earr\label{linear thm equ2}
\sup_{B_R}( |\nabla f|^2 - \alpha f_t-\varphi)(x,t)\le
\frac{C\alpha^2(t)}{R^2}+\frac{C\alpha^2(t)\sqrt k}{R}\coth(\sqrt
k\cdot R)+\frac{C\alpha^4(t)\coth(kt)}{R^2}, \eearr
where $C$ is a constant depending only on $n$, $\alpha=1+\frac{2}{3}kt$ and $\varphi(t)=\frac{n}{2t}+\frac{nk}{2}(1+\frac{1}{3}kt)$.\\

Moveover, letting $R\rightarrow \infty$, if  $Ric(M)\ge -k$ on the
complete manfiold, then \earr\label{linear thm equ3}
 |\nabla f|^2 - (1+\frac{2}{3}kt) f_t\le &\frac{n}{2t}+\frac{nk}{2}(1+\frac{1}{3}kt) .\\
\eearr
\end{theo}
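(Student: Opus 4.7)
The plan is to follow the same Li--Yau/Bochner scheme used for Theorem \ref{main thm}, but with the simpler polynomial ansatz $\alpha(t)=1+\tfrac{2}{3}kt$ and $\varphi(t)=\tfrac{n}{2t}+\tfrac{nk}{2}(1+\tfrac{1}{3}kt)$. Setting $f=\ln u$, the heat equation becomes $\Delta f-f_t=-|\nabla f|^2$, and by Bochner's formula combined with $\mathrm{Ric}\ge -k$ and the refined Kato inequality $|\nabla^2 f|^2\ge (\Delta f)^2/n$, a direct computation yields the key inequality
\begin{equation*}
(\Delta-\partial_t)G \;\ge\; \tfrac{2}{n}(\Delta f)^2 - 2\langle\nabla G,\nabla f\rangle - 2k|\nabla f|^2 + \alpha'(t)\,f_t + \varphi'(t),
\end{equation*}
where $G=|\nabla f|^2-\alpha(t)f_t-\varphi(t)$. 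This is the engine of the proof; the rest is a maximum-principle analysis.

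For the global estimate (\ref{linear thm equ3}) I would argue as follows. Suppose $G$ has an interior maximum at $(x_0,t_0)$; then $\nabla G=0$, $(\Delta-\partial_t)G\le 0$, and the differential inequality reduces to the algebraic statement
\begin{equation*}
\tfrac{2}{n}(\Delta f)^2 \;\le\; 2k|\nabla f|^2 - \alpha'f_t - \varphi'.
\end{equation*}
Introducing $s=|\nabla f|^2-f_t=-\Delta f$ and using $G=s-(\alpha-1)f_t-\varphi$ to eliminate $f_t$, this becomes a quadratic inequality in $s$ whose maximum over $s$, evaluated with the explicit $\alpha=1+\tfrac{2}{3}kt$ and $\varphi=\tfrac{n}{2t}+\tfrac{nk}{2}(1+\tfrac{1}{3}kt)$, forces $G\le 0$. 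The point is that $\alpha'=\tfrac{2k}{3}$ and $\varphi'=-\tfrac{n}{2t^2}+\tfrac{nk^2}{6}$ are exactly the values that make the leading $1/t$ and constant-in-$t$ contributions cancel; this is the linearization in $kt$ of the exact cancellation that occurs in Theorem \ref{main thm}.

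For the local estimate (\ref{linear thm equ2}), I would localize by introducing a cutoff $\psi(x)=\eta(r(x,O)/R)$ with $\eta$ smooth, non-increasing, $\eta\equiv 1$ on $[0,1]$, $\eta\equiv 0$ on $[2,\infty)$, satisfying the standard bounds $(\eta')^2/\eta\le C$, $\eta''\ge -C$. Using the Laplacian comparison theorem (with Calabi's trick at the cut locus) under $\mathrm{Ric}\ge -k$,
\begin{equation*}
\Delta r \le (n-1)\sqrt{k}\coth(\sqrt{k}\,r)+\tfrac{C}{r},
\end{equation*}
one gets $|\nabla\psi|^2/\psi\le C/R^2$ and $\Delta\psi\ge -C(1+\sqrt{k}\,R\coth(\sqrt{k}\,R))/R^2$. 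Apply the maximum principle to $\psi G$ at its space-time maximum $(x_0,t_0)$ (possibly multiplying by $t$ first to absorb the $t\to 0^+$ singularity of $\varphi$): at this point $\psi\nabla G=-G\nabla\psi$, so the cross-term $-2\psi\langle\nabla G,\nabla f\rangle$ becomes $2G\langle\nabla\psi,\nabla f\rangle$, which is then absorbed into the $\tfrac{2}{n}(\Delta f)^2$ and $-2k|\nabla f|^2$ terms via Cauchy--Schwarz/Young's inequality. The resulting inequality is quadratic in $\psi G^*$, and solving it produces the stated bound with the three boundary terms scaled by powers of $\alpha(t)$.

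The main obstacle is the last step: balancing the Young's inequality choices to produce precisely the $\alpha^2(t)/R^2$, $\alpha^2(t)\sqrt k\coth(\sqrt k R)/R$, and $\alpha^4(t)\coth(kt)/R^2$ terms in (\ref{linear thm equ2}). The powers of $\alpha$ enter because, in order to use $\tfrac{2}{n}(\Delta f)^2=\tfrac{2}{n}(f_t-|\nabla f|^2)^2$ to dominate the cross-terms, one must express $f_t$ in terms of $G$ and $|\nabla f|^2$ using $\alpha$, and the coefficient $\alpha$ propagates through the Cauchy--Schwarz estimate; tracking this dependence carefully, together with the $\coth(kt)$ factor coming from the $\partial_t(\psi G)$ term once $t$ is incorporated, is the delicate bookkeeping that produces the exact exponents in the statement.
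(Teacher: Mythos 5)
Your proposal follows essentially the same Li--Yau/Bochner $+$ cutoff $+$ maximum-principle scheme as the paper, and the algebraic heart you identify is correct: with $\alpha=1+\tfrac{2}{3}kt$, $\varphi=\tfrac{n}{2t}+\tfrac{nk}{2}(1+\tfrac{1}{3}kt)$, and $s=-\Delta f$, the quadratic inequality at a positive interior maximum reduces (after completing the square in $s$) to $\tfrac{2G}{t}\le 0$, which is exactly the cancellation the theorem is built on. Using the crude Kato inequality $|f_{ij}|^2\ge\tfrac1n(\Delta f)^2$ on $f_{ij}$ rather than the shifted matrix $f_{ij}+(\tfrac1{2t}+\tfrac{k}{2})g_{ij}$ (as the paper's Lemma \ref{sLY lem2} does via an exact completion of the square) is mathematically equivalent — the difference $|f_{ij}|^2-\tfrac1n(\Delta f)^2$ is invariant under adding multiples of $g_{ij}$ — so this is only an organizational difference.

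There are, however, two points to tighten. First, your plan for the global estimate (\ref{linear thm equ3}) relies on an interior space-time maximum of $G$, which is guaranteed only on a closed manifold; on a complete noncompact manifold, $G$ need not attain its supremum, and the paper instead obtains (\ref{linear thm equ3}) as the $R\to\infty$ limit of the local estimate (\ref{linear thm equ2}). You should route the noncompact global statement through the local one rather than through a direct maximum-principle argument. Second, the weight you multiply by matters for reproducing the precise form of (\ref{linear thm equ2}): the paper takes $\beta(t)=\tanh(kt)$ (not $t$), so that $\beta/(\alpha-1)$ stays bounded as $t\to 0^+$ (needed to control the Young's-inequality error $\tfrac{n\alpha^2|\nabla\phi|^2}{4\phi(\alpha-1)}$) \emph{and} $\beta$ is monotone bounded for large $t$, which is exactly what produces the $\coth(kt)$ factor in the final local bound. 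With $\beta=t$ the same argument goes through but yields a differently normalized right-hand side, so if you want the exact statement (\ref{linear thm equ2}) you need the $\tanh(kt)$ weight.
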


\begin{rema}
The global estimate (\ref{linear thm equ3}) in Theorem \ref{linear
thm} was also obtained in \cite{BQ} by a different method. The local
estimate (\ref{linear thm equ2}) is new. Our proof seems to be
simpler and is more of the local spirit of the classical Li-Yau's
result. Moreover, the method we used can be extended to a matrix
version.
\end{rema}

\begin{rema}
(\ref{linear thm equ3}) is in the same spirit of (\ref{main thm
equ3}) without blow-up parameter $\alpha$. In addition, one can see
from the proof that the first variation vanishes if $M$ is an
Einstein manifold and $u$ satisfies the following gradient Ricci
soliton equation, (a concept first introduced by R. Hamilton in the
study of Ricci flow \cite{Ha93}) \begeq\label{linear thm equality}
\frac{1}{2}R_{ij}-\nabla_i\nabla_j(\ln u)-\frac{1}{2t}g_{ij}\equiv
0.\\
\endeq
\end{rema}

In spirit, Theorem \ref{main thm} and Theorem \ref{linear thm} are
very close. The difference is the choice of $\alpha(t)$ and
$\varphi(t)$. Inspecting the following series expansion of functions
$\alpha(t)$ and $\varphi(t)$, one can compare these two theorems.
\[
\begarr
\frac{nk}{2}\big[\coth
(kt)+1\big]&=&\frac{n}{2t}+\frac{1}{2}nk+\frac{nk}{6}(kt)-\frac{nk}{90}(kt)^3+O((kt)^{5})\\
\frac{\sinh(kt)\cosh(kt)-kt}{\sinh^2(kt)}&=&\frac{2}{3}kt-\frac{4}{45}(kt)^3-\frac{4}{315}(kt)^5+O((kt)^7).
\endarr
\]
Indeed, functions in Theorem \ref{linear thm} are the leading terms
of the expansions of functions in Theorem \ref{main thm}. Moreover,
one can show by computations that
$1+\frac{\cosh(kt)\sinh(kt)-kt}{\sinh^2(kt)}\le 1+\frac{2}{3}kt$ and
$\frac{nk}{2}\big[\coth (kt)+1\big]\le
\frac{n}{2t}+\frac{n}{2}k+\frac{n}{6}k^2t$. This implies that
Theorem \ref{main thm} yields sharper estimate than its linearized
version, Theorem \ref{linear thm}.\\


In \cite{Yau1,Yau2}, Yau established the following gradient
estimate: if $Ricci(M)\ge-k$ with $k\ge0$, then \earr\label{Yau
harnack improved} {|\nabla f|^2} - f_t\le \sqrt{2nk}\sqrt{|\nabla
f|^2+\frac{n}{2t}+2nk}+\frac{n}{2t}, \eearr for all $t>0$. Later,
Bakry-Qian \cite{BQ} improved the inequality to the following
\earr\label{Bakry-Qian harnack improved}
{|\nabla f|^2} - f_t\le \sqrt{nk}\sqrt{|\nabla f|^2+\frac{n}{2t}+\frac{nk}{4}}+\frac{n}{2t}.\\
\eearr

The righthand sides of
(\ref{Yau harnack improved}) and (\ref{Bakry-Qian harnack improved})
blow up as $\frac{n}{2t}+O(\frac{1}{\sqrt t})$ when $t$
is small, while (\ref{main thm equ3}) and (\ref{linear thm equ3})
give sharper estimates which has
blow up order of $\frac{n}{2t}$.\\

In another direction, Hamilton \cite{Ha93matrix} proved

\begeq\label{hamilton} \frac{|\nabla u|^2}{u^2} -
e^{2kt}\frac{u_t}{u} \le e^{4kt}\frac{n}{2t},
\endeq
which is also sharp in the leading term for small $t$. But when
$t\rightarrow \infty$, the righthand sides of (\ref{linear thm
equ3}) and (\ref{hamilton}) will blow up, while the estimate (\ref{main thm
equ3}) in the main theorem stays bounded which implies a better estimate.  In regard of Li-Yau-Davies estimates (\ref{LY
equ2})-(\ref{Davies}) and Hamilton's estimate, one can see that the
new estimate (\ref{main thm equ3}) works for both large and small
time $t$. 

We can extend (\ref{linear thm equ3}) and (\ref{main thm equ3}) to
the following : under the same hypothesis of Theorem \ref{main thm},
the following holds
\[
\alpha(t)u_t+\varphi(t)u+2Du(V)+u|V|^2\ge0,
\]
for any vector field $V$, where $\alpha(t)$ and $\varphi(t)$ are
defined as in Theorem \ref{main thm} and \ref{linear thm}
respectively. When $k=0$, this form of Li-Yau estimate was first
pointed out in Hamilton's work \cite{Ha93matrix}. Choosing $V\equiv
0$, we get \earr\label{V=0} -\alpha(t)f_t\le \varphi(t), {\rm\quad
for\ all\ }t>0, \eearr where $f=\ln u$. One immediate application of
(\ref{V=0}) for is that
$t^\frac{n}{2}(1+\frac{2}{3}kt)^{-\frac{n}{8}}e^{\frac{n}{4}kt}u$ is
monotonic in $t$. When $k=0$, the monotonicity of $t^\frac{n}{2}u$
is known.\\


The sharp Li-Yau gradient estimate has tremendous impact in the past twenty years. On one hand, this gradient estimate is a differential Harnack inequality. Namely, it leads to a classical parabolic Harnack inequality which further yields powerful estimates for heat kernels on manifold with nonnegative Ricci curvature. 
There is a vast literature in studying heat kernel even before Li-Yau's work. On the other hand, the idea of Li-Yau leads to Hamilton's Harnack inequalities in the study of Ricci flow which plays a central role in his famous program. We will discuss more along this direction in the second part of this paper.\\

Along the line of Li-Yau, we find applications of the new gradient estimates in deriving Harnack inequalities and new estimates on heat kernels.
For example, we use our gradient estimates to obtain the following Harnack inequatlity.

\begin{theo}\label{Li-Yau Harnack thm1 local}
If $M$ is a complete, noncompact Riemannian manifold with
$Ricci(M)\ge -k$. If $u(x,t):M\times(0,\infty)\rightarrow \mathbb
R^+$ is a positive solution of the heat equation on $M$, then for
$\forall x_1,x_2\in M$, $0<t_1<t_2<\infty$, the following inequality
holds: \begeq\label{Li-Yau Harnack thm1 local equ1} u(x_1,t_1)\le
u(x_2,t_2)A_1(t_1,t_2)\cdot \exp\Bigg[
\frac{dist^2(x_2,x_1)}{4(t_2-t_1)}(1+A_2(t_1,t_2)) \Bigg]
\endeq
where $x_1,x_2\in M$, $0<t_1<t_2<\infty$, $dist(x_1,x_2)$ is the distance between $x_1$ and $x_2$,
$A_1=\Big(\frac{ e^{2kt_2}-2kt_2-1}{ e^{2kt_1}-2kt_1-1}\Big)^\frac{n}{4} $, and $A_2(t_1,t_2)=\frac{t_2\coth (kt_2)-t_1\coth(kt_1)}{t_2-t_1}$.\\
\end{theo}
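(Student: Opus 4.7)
The plan is to adapt the classical Li-Yau integration trick: integrate the differential Harnack estimate along a space-time path. From Theorem \ref{main thm} (taking $R\to\infty$), the global gradient inequality
\[
\alpha(s)\,f_s \;\ge\; |\nabla f|^2 - \varphi(s)
\]
holds for $f=\ln u$ with $\alpha(s)=1+\frac{\sinh(ks)\cosh(ks)-ks}{\sinh^2(ks)}$ and $\varphi(s)=\frac{nk}{2}[\coth(ks)+1]$. I would fix $x_1,x_2\in M$ and $0<t_1<t_2$, and take $\gamma:[t_1,t_2]\to M$ to be a minimizing geodesic from $x_1$ to $x_2$ parametrized with constant speed $|\dot\gamma|=d(x_1,x_2)/(t_2-t_1)$.

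Setting $\eta(s)=f(\gamma(s),s)$, compute
\[
\frac{d\eta}{ds} \;=\; \langle\nabla f,\dot\gamma\rangle + f_s \;\ge\; \langle\nabla f,\dot\gamma\rangle + \frac{|\nabla f|^2}{\alpha(s)} - \frac{\varphi(s)}{\alpha(s)} \;\ge\; -\frac{\alpha(s)\,|\dot\gamma|^2}{4} - \frac{\varphi(s)}{\alpha(s)},
\]
where the final step is completing the square in $\nabla f$ (and uses $\alpha(s)>0$). Integrating from $t_1$ to $t_2$ and then exponentiating $f=\ln u$ yields
\[
u(x_1,t_1) \;\le\; u(x_2,t_2)\,\exp\!\left[\,\frac{d^2(x_1,x_2)}{4(t_2-t_1)^2}\int_{t_1}^{t_2}\!\alpha(s)\,ds \,+\, \int_{t_1}^{t_2}\frac{\varphi(s)}{\alpha(s)}\,ds\,\right].
\]

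What remains is purely algebraic: matching the two integrals against $A_1$ and $A_2$. For the first, I would observe the antiderivative identity $\tfrac{d}{ds}[s\coth(ks)]=\coth(ks)-\tfrac{ks}{\sinh^2(ks)}=\alpha(s)-1$, so that $\int_{t_1}^{t_2}\alpha(s)\,ds=(t_2-t_1)(1+A_2(t_1,t_2))$, producing exactly the coefficient of $d^2(x_1,x_2)/[4(t_2-t_1)]$ in the claimed exponential. For the second, using $1+\coth(kt)=e^{kt}/\sinh(kt)$ and $2e^{kt}\sinh(kt)=e^{2kt}-1$, one finds
\[
\frac{\varphi(t)}{\alpha(t)} \;=\; \frac{nk(e^{2kt}-1)}{2(e^{2kt}-2kt-1)} \;=\; \frac{n}{4}\,\frac{d}{dt}\ln\bigl(e^{2kt}-2kt-1\bigr),
\]
so exponentiating its integral from $t_1$ to $t_2$ gives precisely $A_1(t_1,t_2)$. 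The main (mild) obstacle is recognizing that both antiderivatives telescope cleanly; this is the algebraic content that makes the specific forms of $\alpha$ and $\varphi$ in Theorem \ref{main thm} so convenient for the Harnack application.
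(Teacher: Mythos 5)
Your proof is correct and follows essentially the same route as the paper: integrate the Hamilton-Jacobi inequality obtained from (\ref{main thm equ3}) along a space-time path joining $(x_1,t_1)$ to $(x_2,t_2)$, complete the square in $|\nabla f|$, and evaluate $\int\alpha$ and $\int\varphi/\alpha$ in closed form. The paper parametrizes the path over $[0,1]$ running backward in time, while you parametrize directly by $s\in[t_1,t_2]$, but this is a cosmetic difference; your explicit antiderivative identities for $\alpha-1$ and $\varphi/\alpha$ match what the paper records (after rewriting $\sinh(2kt)+\cosh(2kt)=e^{2kt}$) and correctly reproduce $A_1$ and $A_2$.
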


\begin{rema}
Easy to see $ \di\lim_{k\rightarrow 0}A_1(t_1,t_2)=\Big(
\frac{t_2}{t_1} \Big)^\frac{n}{2}$, $ \di\lim_{k\rightarrow
0}A_2(t_1,t_2)=0. $
\end{rema}

Similarly, the linearized gradient estimate also yields a
corresponding Harnack inequality.
\begin{theo}\label{Li-Yau Harnack thm1 linear}

If $M$ is a complete, noncompact Riemannian manifold with
$Ricci(M)\ge -k$. If $u(x,t):M\times(0,\infty)\rightarrow \mathbb
R^+$ is a positive solution of the heat equation on $M$, then for
$\forall x_1,x_2\in M$, $0<t_1<t_2<\infty$, the following inequality
holds: \begeq\label{Li-Yau Harnack thm1 local linear equ1} u(x_1,t_1)\le
u(x_2,t_2)\Big(\frac{t_2}{t_1}
\Big)^\frac{n}{2}\cdot\Big(\frac{1+\frac{2}{3}kt_2}{1+\frac{2}{3}kt_1}
\Big)^{-\frac{n}{8}}\cdot \exp\Big(
\frac{dist^2(x_2,x_1)}{4(t_2-t_1)}\big(1+\frac{1}{3}k(t_2+t_1)\big)+\frac{n}{4}k(t_2-t_1)
\Big)
\endeq
where $x_1,x_2\in M$, $0<t_1<t_2<\infty$, $dist(x_1,x_2)$ is the distance between $x_1$ and $x_2$.\\
\end{theo}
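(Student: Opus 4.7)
The plan is to run the classical Li-Yau path-integration argument, fed by the linearized differential Harnack inequality $(\ref{linear thm equ3})$ rather than the original one. Writing $f = \ln u$, $\alpha(t) = 1 + \tfrac{2}{3}kt$, and $\varphi(t) = \tfrac{n}{2t} + \tfrac{nk}{2}(1+\tfrac{1}{3}kt)$, that estimate rearranges to $f_t \ge (|\nabla f|^2 - \varphi)/\alpha$, which will be the only analytic input. The right-hand side of the target inequality $(\ref{Li-Yau Harnack thm1 local linear equ1})$ then has to emerge entirely from integrating this bound along a geodesic.

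First, fix a minimizing geodesic $\gamma : [t_1, t_2] \to M$ with $\gamma(t_i) = x_i$, parametrized so that $|\dot\gamma| \equiv \dist(x_1,x_2)/(t_2-t_1)$. Compute
\[
\frac{d}{dt} f(\gamma(t), t) = f_t + \langle \nabla f, \dot\gamma\rangle \ge f_t - |\nabla f|\,|\dot\gamma| \ge \frac{|\nabla f|^2 - \varphi(t)}{\alpha(t)} - |\nabla f|\,|\dot\gamma|,
\]
using Cauchy--Schwarz and then $(\ref{linear thm equ3})$. The right-hand side is a quadratic in the quantity $|\nabla f|$; minimizing over that quantity (the minimum occurs at $|\nabla f| = \alpha |\dot\gamma|/2$) gives the pointwise-in-$t$ bound
\[
\frac{d}{dt} f(\gamma(t),t) \ge -\frac{\alpha(t)|\dot\gamma|^2}{4} - \frac{\varphi(t)}{\alpha(t)}.
\]
Integrating from $t_1$ to $t_2$ and exponentiating will produce an inequality of exactly the form $(\ref{Li-Yau Harnack thm1 local linear equ1})$; it remains only to identify the two integrals on the right.

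The geodesic integral is direct: $\int_{t_1}^{t_2} \tfrac{\alpha(t)|\dot\gamma|^2}{4}\,dt = \tfrac{\dist(x_1,x_2)^2}{4(t_2-t_1)^2}\int_{t_1}^{t_2}(1+\tfrac{2}{3}kt)\,dt = \tfrac{\dist(x_1,x_2)^2}{4(t_2-t_1)}\bigl(1 + \tfrac{k(t_1+t_2)}{3}\bigr)$, which is exactly the distance term in the exponent. For the second integral, use the partial-fraction identity $\frac{1}{t(1+\frac{2}{3}kt)} = \frac{1}{t} - \frac{2k/3}{1+\frac{2}{3}kt}$ on the $\tfrac{n}{2t}$ piece of $\varphi$, and substitute $u = 1+\tfrac{2}{3}kt$ (so $1+\tfrac{1}{3}kt = \tfrac{u+1}{2}$) on the remaining piece. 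Collecting antiderivatives gives $\int_{t_1}^{t_2}\tfrac{\varphi}{\alpha}\,dt = \tfrac{n}{2}\ln(t_2/t_1) - \tfrac{n}{8}\ln\bigl(\tfrac{1+\frac{2}{3}kt_2}{1+\frac{2}{3}kt_1}\bigr) + \tfrac{n}{4}k(t_2-t_1)$, which matches the polynomial prefactors and the $\tfrac{n}{4}k(t_2-t_1)$ in the exponent. The main obstacle is really just bookkeeping: one must track that the $\tfrac{n}{2}\ln$ contribution from $\tfrac{n}{2t}/\alpha$ and the $\tfrac{3n}{8}\ln$ contribution from the $\tfrac{nk}{2}(1+\tfrac{1}{3}kt)/\alpha$ combine to the correct $-\tfrac{n}{8}\ln$ power. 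Nothing else is delicate; in particular, the geodesic need only exist and be minimizing, which is standard on a complete manifold (and the Harnack inequality is insensitive to cut-locus issues because the argument is lower-semicontinuous in $\dist$).
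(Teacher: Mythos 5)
Your proof is correct and follows essentially the same route as the paper's: integrate the linearized differential Harnack estimate along a constant-speed minimizing geodesic, optimize the resulting quadratic in $|\nabla f|$, and evaluate the two remaining $t$-integrals. The only cosmetic difference is that you parametrize the path-in-spacetime by $t\in[t_1,t_2]$ directly rather than by $s\in[0,1]$ as in the paper, and you carry out explicitly the integral $\int_{t_1}^{t_2}\varphi/\alpha\,dt$ that the paper leaves to the reader; your partial-fraction and $u=1+\tfrac{2}{3}kt$ substitution calculation is right and does produce the stated $-\tfrac{n}{8}$ power and $\tfrac{n}{4}k(t_2-t_1)$ term.
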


One should compare the above theorems with others when $t$ is large
or small and when $d(x_1,x_2)$ is large or small.
When $k=0$, they reduce to the classical result.\\

As standard, we find a lower bound of the heat kernel as well by using the Harnack inequality.
\begin{theo}\label{lowerbound heat kernel thm1 intro}
Let $M$ be a complete (or compact with convex boundary) Riemannian
manifold possibly with $Ricci(M)\ge -k$. Let $H(x,y,t)$ be the
(Neumann) heat kernel. Then \earr\label{lowerbound heat kernel thm1
equ1 intro} H(x,y,t)\ge& (4\pi
t)^{-\frac{n}{2}}2^{-\frac{n}{4}}\frac{(2kt)^\frac{n}{2}}{(e^{2kt}-2kt-1)^\frac{n}{4}}\cdot
 \exp\Bigg[-
\frac{d^2(x,y)}{4t}\Big(1+\frac{kt\coth (kt)-1}{kt}\Big)
\Bigg]\\
{ and}&\\
 H(x,y,t)\ge& (4\pi
t)^{-\frac{n}{2}}\exp\Big[-\frac{d(x,y)^2}{4t}(1+\frac{1}{3}kt)-\frac{n}{4}kt
\Big], \eearr for all $x,y\in M$ and $t>0$.
\end{theo}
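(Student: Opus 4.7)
The natural strategy is the classical Li--Yau device: combine the parabolic Harnack inequalities already established in Theorem~\ref{Li-Yau Harnack thm1 local} and Theorem~\ref{Li-Yau Harnack thm1 linear} with the short-time on-diagonal asymptotic of the heat kernel. Fix $y\in M$ and apply the inequality to $u(z,\tau)=H(z,y,\tau)$, which is a positive solution of the heat equation in $(z,\tau)$. With $x_1=y$, $t_1=\varepsilon$, $x_2=x$, $t_2=t$ (for $0<\varepsilon<t$), Theorem~\ref{Li-Yau Harnack thm1 local} gives
\begin{equation*}
H(y,y,\varepsilon)\;\le\;H(x,y,t)\,A_1(\varepsilon,t)\,\exp\!\left[\frac{d^2(x,y)}{4(t-\varepsilon)}\bigl(1+A_2(\varepsilon,t)\bigr)\right],
\end{equation*}
which, after rearranging, yields the lower bound
\begin{equation*}
H(x,y,t)\;\ge\;\frac{H(y,y,\varepsilon)}{A_1(\varepsilon,t)}\,\exp\!\left[-\frac{d^2(x,y)}{4(t-\varepsilon)}\bigl(1+A_2(\varepsilon,t)\bigr)\right].
\end{equation*}

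The next step is to let $\varepsilon\to 0^+$ and use the on-diagonal short-time asymptotic $H(y,y,\varepsilon)\sim(4\pi\varepsilon)^{-n/2}$, which is valid on complete Riemannian manifolds (and for the Neumann heat kernel on compact manifolds with convex boundary, by a reflection comparison). The Taylor expansions $e^{2k\varepsilon}-2k\varepsilon-1\sim 2k^{2}\varepsilon^{2}$ and $\varepsilon\coth(k\varepsilon)\to 1/k$ as $\varepsilon\to 0^+$ give
\begin{equation*}
A_1(\varepsilon,t)\sim\left(\frac{e^{2kt}-2kt-1}{2k^{2}\varepsilon^{2}}\right)^{\!n/4},\qquad A_2(\varepsilon,t)\longrightarrow \coth(kt)-\frac{1}{kt}=\frac{kt\coth(kt)-1}{kt}.
\end{equation*}
The factor $\varepsilon^{-n/2}$ from $H(y,y,\varepsilon)$ exactly cancels the $\varepsilon^{n/2}$ coming from $A_1(\varepsilon,t)^{-1}$, and collecting constants $(4\pi)^{-n/2}\cdot(2k^{2})^{n/4}=(4\pi t)^{-n/2}\cdot 2^{-n/4}(2kt)^{n/2}\big/t^{0}$ produces the first stated inequality.

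For the second estimate I apply exactly the same procedure using Theorem~\ref{Li-Yau Harnack thm1 linear}. The Gaussian exponent limits to $\frac{d^{2}(x,y)}{4t}(1+\tfrac{1}{3}kt)+\tfrac{n}{4}kt$ directly, while the prefactor $(t/\varepsilon)^{n/2}$ now cancels $\varepsilon^{-n/2}$ from $H(y,y,\varepsilon)$ to give $(4\pi t)^{-n/2}$; the remaining factor $(1+\tfrac{2}{3}kt)^{n/8}\ge 1$ is simply dropped to obtain the clean form stated.

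\textbf{Main obstacle.} The delicate point is the justification of the limit $\varepsilon\to 0^+$: one needs that $\varepsilon^{n/2}H(y,y,\varepsilon)$ is bounded below by a positive quantity converging to $(4\pi)^{-n/2}$. On complete manifolds this follows from the Minakshisundaram--Pleijel / Varadhan short-time asymptotic; for the Neumann case with convex boundary it follows from the standard reflection comparison, since convexity ensures that the Neumann kernel dominates (a portion of) the Dirichlet or Euclidean model kernel on small scales. Once this short-time behavior is in place, the remaining computation is a bookkeeping exercise in expanding $A_1$, $A_2$ and the analogous linear coefficients.
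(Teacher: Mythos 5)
Your proof is correct and follows essentially the same route as the paper: apply the Harnack inequality of Theorem~\ref{Li-Yau Harnack thm1 local} (resp.\ \ref{Li-Yau Harnack thm1 linear}) to the heat kernel with one endpoint on the diagonal at small time, and then pass to the limit using the on-diagonal asymptotic $\lim_{s\to 0}(4\pi s)^{n/2}H(y,y,s)=1$. The only cosmetic difference is the parametrization of the limit (the paper keeps $t_2-t_1=t$ fixed via $t_1=s$, $t_2=t+s$ and lets $s\to 0$, whereas you fix $t_2=t$ and send $t_1=\varepsilon\to 0$), which yields identical limits for $A_1$, $A_2$ and the same final constants; your observation that the extra factor $(1+\tfrac{2}{3}kt)^{n/8}\ge 1$ in the linear case may simply be dropped is also accurate.
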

One should compare this theorem with Corollay 2.3 in
\cite{Ha93matrix}.
\begin{rema}
By going through Li-Yau's paper carefully, one can get similar results on the estimates of Greene's function,
lower bounds of Dirichelet or Neumann eigenvalues, betti numbers, etc.
The new contribution will be that explicit dependence of various constants can be established.
\end{rema}

\vspace{1cm}
In the second part of this paper, we will discuss the relation between Li-Yau type gradient estimate,
Hamilton's gradient estimate, and Perelman type differential Harnack inequality.
Motivated by Li-Yau's fundamental work, Hamilton proved the following gradient estimate.\\

\noindent{\bf Theorem} \label{LY}{(\bf R. Hamilton \cite{Ha93matrix})}\quad{\em
Let $(M,g)$ be a closed Riemannian manifold with $Ricci(M)\ge-k$. Let $u(x,t)$ be the positive solution to the heat equation. Assume $u\le A$, then
\earr\label{Ha equ1}
t|\nabla u|^2\le(1+2kt)u^2\ln(\frac{A}{u}).\\
\eearr
}

Ground breaking progress in the study of Ricci flow and complete proof of Poincar\'e conjecture was made by G. Perelman in 2002-2003. Some important tools which enable Perelman to make the breakthrough were related to Li-Yau \cite{LY}, and Hamilton's earlier work \cite{Ha93matrix,Ha93,Ha93RF}, (see also \cite{CH}). More specifically, Hamilton systematically studied the differential Harnack inequalities in Ricci flow along the line of Li-Yau. Perelman discovered a new sharp differential Harnack inequality for Ricci flow which plays a crucial role in his work. One new feature of Perelman's work is that no curvature assumption is assumed. Moreover, Perelman's differential Harnack is modelled on shrinking Ricci soliton and works for all dimension.
\\

A natural question is whether Perelman's new discovery could shed some lights on the results for linear heat equations. Indeed, one could find highly similarities between backward conjugate heat equation along Ricci flow and heat equation solutions on static Riemmannian manifolds. In \cite{NL1}, one of the main results is the following analogue of Perelman's differential Harnack inequality for heat kernels.\\


\noindent{\bf Theorem} \label{LY}{(\bf L. Ni \cite{NL1, NL4})}\quad{\em
Let $(M,g)$ be a closed Riemannian manifold with nonnegative Ricci curvature. Let $u(x,t)=H(x,t;y,o)$ be the positive heat kernel. Then
\earr\label{Ni equ1}
t(2\Delta f-|\nabla f|^2)+f-n\le0,
\eearr
where $u=\frac{e^{-f}}{(4\pi t)^\frac{n}{2}}$.\\
}

Easy to see, this type of differential Harnack quantity $t(2\Delta f-|\nabla f|^2)+f-n$ is a hybrid of Li-Yau's estimate on $|\nabla f|^2-\alpha (\Delta f+|\nabla f|^2)$ and Hamilton's estimate on $|\nabla f|^2+(\frac{1}{t}+2k)f$. As we have seen in section \ref{sec Harnack}, from Li-Yau type gradient estimate, one could get a Hamilton-Jacobi inequality which leads to the generalization to a classical parabolic Harnack inequality of Moser. This powerful method was started by Li and Yau. Hamilton extended this method further for heat equations. Moreover, he established similar estimates in the study of Ricci flow. This method now is generally referred as Li-Yau-Hamilton estimate (LYH) (cf. \cite{NL3}).\\

In regard of the nice curvature free feature of Perelman's LYH type
differential Harnack inequality under Ricci flow, and our new
discovery of Li-Yau gradient estimate, one may ask : can one find a
Perelman type of differential Harnack inequality for heat kernels on
any closed Riemannian manifolds? We answer this question
affirmatively. The following is the second main theorem in our
paper.

\begin{theo}\label{thm1.2}
Suppose $M^n$ is a closed manifold. Let $u$ be the positive heat kernel and $k\ge0$ is any constant satisfying $R_{ij}(x)\ge -kg_{ij}$ for all $x\in M^n$, then
\begeq\label{harnack inequ}
\begin{array}{rll}
v:=\big[t\Delta f+t(1+kt)(\Delta f -|\nabla f|^2)+f-n(1+\frac{1}{2}kt)^2\big]u\le 0,
\end{array}
\endeq
for all $t>0$ with $u=\frac{e^{-f}}{(4\pi t)^\frac{n}{2}}$. When $k=0$, this theorem is due to L. Ni.

Moreover, 
\begeq
(\frac{\partial}{\partial t}-\Delta)v=-2t\big|\nabla_i\nabla_j f-(\di\frac{1}{2t}+\frac{k}{2})g_{ij}\big|^2u-2t(R_{ij}+kg_{ij})f_if_ju.
\endeq
\end{theo}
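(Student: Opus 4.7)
The plan is to follow the Perelman--Ni template: derive a heat-type evolution equation for $v$, use $R_{ij}\ge -kg_{ij}$ to see that the right-hand side is pointwise $\le 0$, and then conclude $v\le 0$ by the maximum principle on the closed manifold, taking the short-time behavior of the heat kernel as the initial condition.

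The central step is the evolution equation itself. Writing $u=e^{-f}/(4\pi t)^{n/2}$, the heat equation $u_t=\Delta u$ translates to
\[ f_t=\Delta f-|\nabla f|^2-\tfrac{n}{2t}, \]
and from $\nabla u=-u\nabla f$ one obtains the product rule
\[ (\partial_t-\Delta)(hu)=\bigl[(\partial_t-\Delta)h+2\langle\nabla h,\nabla f\rangle\bigr]u \]
for any smooth $h$. So with $w:=v/u$ the task reduces to computing $(\partial_t-\Delta)w+2\langle\nabla w,\nabla f\rangle$ from the building blocks
\[ (\partial_t-\Delta)f=-|\nabla f|^2-\tfrac{n}{2t},\qquad (\partial_t-\Delta)\Delta f=-\Delta|\nabla f|^2, \]
together with Bochner's formula
\[ (\partial_t-\Delta)|\nabla f|^2=-2|\nabla\nabla f|^2-2R_{ij}f_if_j-2\langle\nabla|\nabla f|^2,\nabla f\rangle. \]
Applied to the middle block $t(1+kt)(\Delta f-|\nabla f|^2)$, the $f_i(\Delta f)_i$ and $f_if_jf_{ij}$ terms cancel against $2\langle\nabla(\,\cdot\,),\nabla f\rangle$, leaving a clean $(1+2kt)(\Delta f-|\nabla f|^2)$. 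Bochner then converts $-t\Delta|\nabla f|^2+2t\langle\nabla\Delta f,\nabla f\rangle$ coming from the first block into $-2t|\nabla\nabla f|^2-2tR_{ij}f_if_j$, and the constant $-n(1+kt/2)^2$ together with the $-n/(2t)$ from $(\partial_t-\Delta)f$ supplies exactly the scalar needed to complete the square $2(1+kt)\Delta f-2t|\nabla\nabla f|^2$ into $-2t\bigl|\nabla\nabla f-(\tfrac{1}{2t}+\tfrac{k}{2})g\bigr|^2$. The surviving $-2kt|\nabla f|^2$ pairs with $-2tR_{ij}f_if_j$ to give $-2t(R_{ij}+kg_{ij})f_if_j$, as claimed. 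The delicate point is that the coefficients $t(1+kt)$ and $n(1+kt/2)^2$ in the definition of $v$ are not free parameters: they are forced by the requirement that all $\Delta f$, $|\nabla f|^2$, and constant residues assemble into the perfect square plus the Ricci term, with no leftover.

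Granted this evolution identity, the hypothesis $R_{ij}+kg_{ij}\ge 0$ makes $(\partial_t-\Delta)v\le 0$, so $v$ is a subsolution of the heat equation on the closed manifold. The maximum principle then gives $v\le 0$ for all $t>0$, provided $\limsup_{t\to 0^+}\max_M v\le 0$. Verifying this initial condition is the principal remaining obstacle, since as $t\to 0$ the kernel concentrates at the base point $o$ and $f\sim d(x,o)^2/(4t)$, so each constituent of $w$ blows up individually and one must see cancellation in the limit. In the Euclidean model $f=|x|^2/(4t)$ a direct substitution yields $w=-\tfrac{k}{4}|x|^2-\tfrac{nkt}{2}-\tfrac{nk^2t^2}{4}\le 0$, which shows the sign is correct; on a general manifold, the standard short-time asymptotic expansion of $H(x,o,t)$, combined with derivative bounds of the Li--Yau type established earlier in the paper, controls the curvature corrections and produces the desired initial bound, paralleling Ni's argument in the $k=0$ case.
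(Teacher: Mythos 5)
Your derivation of the evolution equation for $v$ is correct and matches the paper's Proposition~\ref{prop2.1}: once you write $v=\mathcal{Z}(f+\frac{n}{2}\ln(4\pi t),t)\,u$ and use the product rule $(\partial_t-\Delta)(hu)=[(\partial_t-\Delta)h+2\langle\nabla h,\nabla f\rangle]u$, the transport terms cancel and the bracket assembles into the perfect square plus the Ricci term exactly as you describe. The coefficients $t(1+kt)$ and $n(1+kt/2)^2$ are indeed forced. That part is fine, and it is essentially the paper's computation.

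The genuine gap is in the second half: your route to the inequality $v\le0$ is a pointwise parabolic maximum principle, which requires verifying $\limsup_{t\to0^+}\max_M v\le 0$, and you label this the ``principal remaining obstacle'' without actually closing it. This is not a minor technicality. At the base point $y$ the bracket tends to a finite negative number while $u\to\infty$; away from $y$ the bracket grows like a polynomial in $1/t$ while $u$ decays exponentially; near the cut locus the short-time asymptotics are not smooth. Whether the pointwise supremum over $M$ stays nonpositive in the limit is precisely the delicate claim, and the paper deliberately does \emph{not} attempt it. Instead the paper tests $v$ against a positive solution $h$ of the \emph{backward} heat equation, showing that $\mathcal{W}_h(t)=\int_M v\,h\,d\mu$ is nonincreasing (by your evolution equation and $R_{ij}+kg_{ij}\ge0$) and that $\limsup_{t\to0}\mathcal{W}_h(t)\le 0$; since this holds for every positive $h$, $v\le 0$ follows. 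The integral limit is tractable because averaging against $Fh$ converts the competing blow-ups into quantities like $\int \frac{d^2}{4t}Fh\,d\mu\to\frac{n}{2}h(0,x)$, and is controlled using the appendix's heat-kernel asymptotics, the Stroock--Turetsky/Hsu derivative bounds, and the Cheeger--Yau and Davies--Mandouvalos comparisons. Your appeal to ``derivative bounds of the Li--Yau type established earlier in the paper'' is also off the mark: Remark~\ref{trick} explicitly notes that the proof does \emph{not} rely on the Li--Yau-type gradient estimates. Your Euclidean sanity check ($w=-\frac{k}{4}|x|^2-\frac{nkt}{2}-\frac{nk^2t^2}{4}\le0$) is correct and encouraging, but it is a model computation, not a proof of the general initial bound; in particular it does not address the cut-locus contributions that the paper's integral argument and mean-value selection of times $t_i\to0$ are designed to handle.
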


\begin{rema}
The right handside of the evolution equation of $v$ vanishes if the
manifold is Einstein and $f$ satisfies a gradient Ricci soliton
equation. See discussions in section \ref{sec
entropy}.  \\
\end{rema}

\begin{rema}
The proof of the above Perelman type LYH differential Harnack inequality is independent
of the Li-Yau type estimate. In fact, besides the discovery of this
differential Harnack quantity for general manifolds, we have
overcome a technical difficulty in the proof, which in the
$Ricci(M)\ge 0$ case, see \cite{NL1,NL4}, previously one has to appeal to
Perelman's reduced distance function, see Remark \ref{trick}. \\
\end{rema}


Point-wise differential Harnack inequalities and monotonicity formulas for entropy functionals are closely related. Usually, a point-wise differential Harnack quantity easily yields a monotonicity formula for the related functional. But reversely, it is more difficult to find the corresponding differential Harnack quantity from a functional monotonicity. In this paper, we will analyze this relation and give various different new entropy monotonicity formulas for heat solution. In fact, this served as one of the motivations of this paper.\\

We introduce the following Li-Yau type entropy formula $\mathcal W_{LY}$ and Perelman type entropy formula $\mathcal W_{P}$, which were formulated from our new point-wise Li-Yau differential Harnack quantity and the new Perelman type differential Harnack quantity respectively.
\earr\label{}
{\mathcal W_{LY}}(u,t)=&-\Dint_{M^n}\sinh^2(kt)\Big[\Delta \ln u + \frac{nk}{2}\big[\coth (kt)+1\big] \Big]ud\mu\\
{\rm or}&\\=&-\Dint_{M^n}t^2\Big[\Delta \ln u + \frac{n}{2t}+\frac{nk}{2}(1+\frac{1}{3}kt) \Big]ud\mu,\\
\\
{\mathcal W_{P}}(f,\tau)=&\Dint_{M^n}\Big(\tau|\nabla f|^2+f-n(1+\textstyle\frac{1}{2}k\tau)^2\Big)\frac{e^{-f}}{(4\pi\tau)^{\frac{n}{2}}}d\mu,
\eearr
where $u$ is a positive solution of the heat equation, $\frac{d\tau}{dt}=1$, and $\frac{e^{-f}}{(4\pi\tau)^{\frac{n}{2}}}=u$ in $\mathcal W_{P}$. We shall prove that these entropy functionals are nonpositive and monotonically nonincreasing for all $t>0$ on manifolds with $Ricci(M)\ge -k$. Moreover, the first variation vanishes if and only if the manifold is Einstein and $\ln u$ satisfies a gradient Ricci soliton equation.\\

We also discuss estimates for Nash entropy on a closed manifold with $Ricci(M)\ge-k$. \\


\begin{rema}
As in Li-Yau, one could extend all the results in this paper to heat equations with potentials. In particular, one can obtain better Harnack inequality, and lower, upper bounds for the fundamental solution. This was treated by the authors in a separated paper \cite{LX3}.
\end{rema}

This paper is organized as following. In section \ref{sec gradient},
we prove the generalized Li-Yau gradient estimates for manifolds with Ricci curvature bounded from below.
In section \ref{sec Harnack}, we discuss applications of the
gradient estimates and obtain classical Harnack inequalities and
estimates for heat kernels. In section \ref{sec LYHP2}, we prove
the Perelman type Li-Yau-Hamilton Theorem \ref{thm1.2}. In section \ref{sec LYHP Har}, as applications of Theorem \ref{thm1.2},
we derive another parabolic Harnack inequality for
heat kernels. In section \ref{sec entropy}, we discuss
various entropy formulas with monotonicity for heat equations.\\


In this paper, we will use Einstein convention.

\section{\bf Li-Yau type gradient estimates on general manifolds}\label{sec gradient}
We start with the following lemmas.

\begin{lemm}\label{sLY lem1}
Let $M^n$ be a Riemannian manifold, and $u(x,t)$ is a positive
solution of the heat equation. Let $\alpha(t)$ and $\varphi(t)$ be
functions depending on $t$ and $F=|\nabla f|^2-\alpha f_t-\varphi$.
If $f=\ln u$, then
\earr\label{sLY lem1 equ1}
f_t &=& \Delta f+|\nabla f|^2\\
(\Delta -\partial_t)f_t &=& -2\nabla f\nabla f_t\\
(\Delta -\partial_t)|\nabla f|^2&=& 2|f_{ij}|^2-2\nabla |\nabla f|^2\nabla f+2R_{ij}f_if_j \\
(\Delta -\partial_t)F&=& 2|f_{ij}|^2-2\nabla F\nabla
f+2R_{ij}f_if_j+\alpha' f_t +\varphi'
\eearr
\end{lemm}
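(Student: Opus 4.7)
The plan is to derive the four identities in order, using only the heat equation, the fact that $\partial_t$ commutes with the spatial operators $\nabla$ and $\Delta$ on the static metric $g$, and the Bochner--Weitzenböck formula. No preparation beyond these is needed, so the proof will be a direct computation.

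First I would establish identity (i) by plugging $u=e^f$ into the heat equation: since $u_t=e^f f_t$ and $\Delta u=e^f(\Delta f+|\nabla f|^2)$, dividing through by $u>0$ gives $f_t=\Delta f+|\nabla f|^2$. For identity (ii), I would differentiate identity (i) in $t$ and use that $(\Delta f)_t=\Delta f_t$ on a static manifold, together with $\partial_t|\nabla f|^2=2\nabla f\cdot\nabla f_t$; the resulting equation rearranges directly to $(\Delta-\partial_t)f_t=-2\nabla f\cdot\nabla f_t$.

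For identity (iii), the main input is the Bochner formula
\[
\Delta|\nabla f|^2=2|f_{ij}|^2+2\,\nabla f\cdot\nabla(\Delta f)+2R_{ij}f_if_j,
\]
together with $\partial_t|\nabla f|^2=2\nabla f\cdot\nabla f_t$. Subtracting, the two inner-product terms combine as $2\nabla f\cdot\nabla(\Delta f-f_t)=-2\nabla f\cdot\nabla|\nabla f|^2$ by identity (i), producing the stated formula.

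Identity (iv) is then a linear combination: since $\alpha$ and $\varphi$ depend only on $t$, one checks that $(\Delta-\partial_t)(\alpha f_t)=\alpha(\Delta-\partial_t)f_t-\alpha' f_t$ and $(\Delta-\partial_t)\varphi=-\varphi'$. Substituting (ii) and (iii) into the definition $F=|\nabla f|^2-\alpha f_t-\varphi$, the two cross terms $-2\nabla|\nabla f|^2\cdot\nabla f+2\alpha\nabla f_t\cdot\nabla f$ assemble into $-2\nabla F\cdot\nabla f$ via $\nabla F=\nabla|\nabla f|^2-\alpha\nabla f_t$. There is no genuine obstacle in this lemma; the only point that needs care is the sign bookkeeping when the operator $(\Delta-\partial_t)$ is pushed through the time-dependent coefficients $\alpha(t)$ and $\varphi(t)$, which is where the extra terms $\alpha' f_t$ and $\varphi'$ enter.
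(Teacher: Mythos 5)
Your proof is correct and follows essentially the same route as the paper: derive (i) directly from the heat equation, obtain (ii) by commuting $\partial_t$ with $\nabla$ and $\Delta$ on the static metric, use the Bochner formula together with (i) for (iii), and then assemble (iv) as a linear combination, accounting for the extra $\alpha' f_t$ and $\varphi'$ terms that arise when $(\Delta-\partial_t)$ hits the time-dependent coefficients. The sign bookkeeping you flag is exactly the only subtle point, and you have handled it correctly.
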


\begin{proof}
From $u_t=\Delta u$ and $f=\ln u$, we get $f_t=\Delta f + |\nabla
f|^2$.  Hence,
\[
\begarr (\Delta -\partial_t)(\Delta f+|\nabla f|^2)
&=&\Delta(\Delta f+|\nabla f|^2)-\Delta f_t-2\nabla f_t\nabla f\\
&=&-2\nabla f_t\nabla f.
\endarr
\]

The second identity follows from Bochner formula.
\[
\begarr (\Delta -\partial_t)|\nabla f|^2
&=&\Delta|\nabla f|^2-2\nabla f_t\nabla f\\
&=&2|f_{ij}|^2+2\nabla \Delta f\nabla f+2R_{ij}f_if_j-2\nabla f_t\nabla f\\
&=&2|f_{ij}|^2-2\nabla |\nabla f|^2\nabla f+2R_{ij}f_if_j.
\endarr
\]
The last identity in (\ref{sLY lem1 equ1}) follows from the definition of $F$ and the first
three identities.
\end{proof}

\begin{lemm}\label{sLY lem2}
Let $M^n$ be a Riemannian manifold. Suppose $u$, $f$, and $F$ are
defined the same as in Lemma \ref{sLY lem1}. By choosing different
sets of $\alpha(t)$ and $\varphi(t)$, we have the following.
\begin{enumerate}
  \item If $\alpha(t)=1+\frac{2}{3}kt$ and $\varphi(t)=\frac{n}{2t}+\frac{n}{2}k+\frac{n}{6}k^2t$, then
  \earr\label{sLY lem2 identity 1}
  (\Delta -\partial_t)F=&2|f_{ij}+\frac{1}{2t}g_{ij}+\frac{k}{2}g_{ij}|^2-2\nabla F\nabla f+\frac{2}{t}F+2(R_{ij}+kg_{ij})f_if_j\\
  \eearr

  \item If $\alpha(t)=1+\frac{\cosh(kt)\sinh(kt)-kt}{\sinh^2(kt)}$ and $\varphi(t)=\frac{nk}{2}\big[\coth (kt)+1\big]$, then
  \earr\label{sLY lem2 identity 2}
  (\Delta -\partial_t)F=&2|f_{ij}+\frac{\varphi}{n}g_{ij}|^2-2\nabla F\nabla f+2k\coth (kt)F+2(R_{ij}+kg_{ij})f_if_j
  \eearr
\end{enumerate}
\end{lemm}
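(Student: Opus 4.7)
The plan is to derive both identities purely algebraically from the fourth line of Lemma \ref{sLY lem1}, namely
\[
(\Delta - \partial_t) F \;=\; 2|f_{ij}|^2 - 2\nabla F\cdot\nabla f + 2R_{ij}f_if_j + \alpha'(t) f_t + \varphi'(t),
\]
by completing the square in $|f_{ij}|^2$ and then using the heat equation plus the definition of $F$ to collect the remaining terms. Concretely, given a target ``soliton coefficient'' $\lambda(t)$, one writes
\[
2|f_{ij}|^2 \;=\; 2\bigl|f_{ij}+\lambda g_{ij}\bigr|^2 - 4\lambda\,\Delta f - 2n\lambda^2,
\]
substitutes $\Delta f = f_t - |\nabla f|^2$ (from the first line of Lemma \ref{sLY lem1}) and uses $|\nabla f|^2 = F + \alpha f_t + \varphi$. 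This converts the right-hand side into an expression linear in $f_t$, linear in $|\nabla f|^2$, plus a pure function of $t$, together with the desired terms $-2\nabla F\cdot\nabla f$, $2(R_{ij}+k g_{ij})f_if_j$, and a multiple of $F$.

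For part (1) I take $\lambda = \frac{1}{2t}+\frac{k}{2}$. The $|\nabla f|^2$-coefficient on the right is $-4\lambda + \frac{2}{t} + 2k$, which vanishes identically; the $f_t$-coefficient is $4\lambda - \frac{2\alpha}{t}$, and a direct computation with $\alpha(t)=1+\tfrac{2}{3}kt$ shows this equals $\alpha'(t)=\tfrac{2k}{3}$; the constant part is $2n\lambda^2-\tfrac{2\varphi}{t}$, which one checks equals $\varphi'(t)=-\tfrac{n}{2t^2}+\tfrac{nk^2}{6}$. These three coefficient matches are the content of the identity.

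For part (2) the same strategy applies with $\lambda = \varphi/n = \tfrac{k}{2}[\coth(kt)+1]$ and coefficient $2k\coth(kt)$ in front of the $F$-term. The $|\nabla f|^2$-coefficient $-4\lambda + 2k\coth(kt)+2k$ again cancels by direct computation. To match the $f_t$-coefficient I differentiate the implicit relation $(\alpha-1)\sinh^2(kt) = \sinh(kt)\cosh(kt)-kt$, which after using $\cosh^2-\sinh^2=1$ yields the clean formula $\alpha'(t) = 2k\bigl[1-(\alpha-1)\coth(kt)\bigr]$; on the other side $4\lambda-2k\coth(kt)\alpha$ simplifies to the same expression. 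The constant term $2n\lambda^2 - 2k\coth(kt)\varphi$ reduces via $\coth^2 - 1 = \mathrm{csch}^2$ to $-\tfrac{nk^2}{2}\mathrm{csch}^2(kt)$, which is $\varphi'(t)$.

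The main (though modest) obstacle is the bookkeeping in part (2): the hyperbolic identities must be applied cleanly to recognize $\alpha'$ and $\varphi'$ from the raw expressions. Everything else is a completion of squares and the substitution $\Delta f = f_t - |\nabla f|^2$; no curvature or analytic input beyond Lemma \ref{sLY lem1} is needed.
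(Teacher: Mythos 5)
Your proposal is correct, and it is essentially the same proof as in the paper: complete the square in $|f_{ij}|^2$ with $\lambda = \varphi/n$ (resp.\ $\lambda = \tfrac{1}{2t}+\tfrac{k}{2}$), add and subtract $2k|\nabla f|^2$ to convert $R_{ij}$ to $R_{ij}+kg_{ij}$, substitute $\Delta f = f_t - |\nabla f|^2$, and then match the coefficients of $|\nabla f|^2$, $f_t$, and the $t$-dependent constant against $cF$. The paper packages the same three matching conditions as a system of equations for $\alpha$ and $\varphi$ (e.g.\ $\varphi' = -\tfrac{2\varphi^2}{n}+2k\varphi$, $\tfrac{4}{n}\varphi - 2k = 2k\coth(kt)$, $\alpha = \tfrac{\frac{4}{n}\varphi - \alpha'}{\frac{4}{n}\varphi - 2k}$), which is exactly what you verify directly.
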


\begin{rema}
By looking at the series expansion, one can see that the first set
of $\alpha(t)$ and $\varphi(t)$ are the linear approximation of the
second set. Moreover, one can show that
$1+\frac{\cosh(kt)\sinh(kt)-kt}{\sinh^2(kt)}\le 1+\frac{2}{3}kt$ and
$\frac{nk}{2}\big[\coth (kt)+1\big]\le
\frac{n}{2t}+\frac{n}{2}k+\frac{n}{6}k^2t$.
\end{rema}

\begin{proof}
We only prove (\ref{sLY lem2 identity 2}). The proof of (\ref{sLY
lem2 identity 1}) is similar. From lemma \ref{sLY lem1}, we get
\earr\label{sLY lem2 identity 2 proof equ1}
(\Delta -\partial_t)F=& 2|f_{ij}|^2-2\nabla F\nabla f+2R_{ij}f_if_j+\alpha' f_t +\varphi'\\
\\
=&2|f_{ij}+\frac{\varphi}{n}g_{ij}|^2-\frac{4}{n}\varphi\Delta f-\frac{2\varphi^2}{n}-2\nabla F\nabla f\\
\\
&-2k|\nabla f|^2+2(R_{ij}+kg_{ij})f_if_j+\alpha' f_t +\varphi'\\
\\
=&2|f_{ij}+\frac{\varphi}{n}g_{ij}|^2-2\nabla F\nabla f+2(R_{ij}+kg_{ij})f_if_j\\
\\
&-2k|\nabla f|^2-\frac{4}{n}\varphi(f_t-|\nabla f|^2)-\frac{2\varphi^2}{n}+\alpha' f_t +\varphi'\\
\\
=&2|f_{ij}+\frac{\varphi}{n}g_{ij}|^2-2\nabla F\nabla f+2(R_{ij}+kg_{ij})f_if_j\\
\\
&+(\frac{4}{n}\varphi-2k)\Big(|\nabla f|^2-\frac{\frac{4}{n}\varphi-\alpha'}{\frac{4}{n}\varphi-2k}f_t-\varphi\Big)-\frac{2\varphi^2}{n} +\varphi'+(\frac{4}{n}\varphi-2k)\varphi\\
\eearr Direct computations yields that,
$\alpha(t)=1+\frac{\cosh(kt)\sinh(kt)-kt}{\sinh^2(kt)}$ and
$\varphi(t)=\frac{nk}{2}\big[\coth (kt)+1\big]$, satisfy the
following system
\begin{equation}
\left\{
\begin{array}{rll}
\varphi'=&-\frac{2\varphi^2}{n} +2k\varphi\\

\frac{4}{n}\varphi-2k=&2k\coth(kt)\\

\alpha=&\frac{\frac{4}{n}\varphi-\alpha'}{\frac{4}{n}\varphi-2k}
\end{array}
\right.
\end{equation}
Plug into (\ref{sLY lem2 identity 2 proof equ1}), we
get
\[
 (\Delta -\partial_t)F=|f_{ij}+\frac{\varphi}{n}g_{ij}|^2-2\nabla F\nabla f+2k\coth (kt)F+2(R_{ij}+kg_{ij})f_if_j,
\]
which completes the proof.
\end{proof}

We also have

\begin{prop}\label{entropy prop1}
Let $M^n$ be a Riemannian manifold. Suppose $u$ is a positive heat solution, $f=-\ln u$, and $F=|\nabla f|^2+\alpha(t)f_t-\varphi(t)$. By choosing different
sets of $\alpha(t)$ and $\varphi(t)$, we have the following indentities respectively.
\begin{enumerate}
  \item If $\alpha(t)=1+\frac{2}{3}kt$ and $\varphi(t)=\frac{n}{2t}+\frac{n}{2}k+\frac{n}{6}k^2t$, then
  \earr\label{entropy prop1 identity 1}
  (\Delta -\partial_t)t^2Fu=&2t^2|f_{ij}-\frac{1}{2t}g_{ij}-\frac{k}{2}g_{ij}|^2u+2t^2(R_{ij}+kg_{ij})f_if_j\\
  \eearr

  \item If $\alpha(t)=1+\frac{\cosh(kt)\sinh(kt)-kt}{\sinh^2(kt)}$ and $\varphi(t)=\frac{nk}{2}\big[\coth (kt)+1\big]$, then
  \earr\label{entropy prop1 identity 2}
  (\Delta -\partial_t)\sinh^2(kt)Fu=&2\sinh^2(kt)|f_{ij}+\frac{\varphi}{n}g_{ij}|^2u+2\sinh^2(kt)(R_{ij}+kg_{ij})f_if_j.
  \eearr
\end{enumerate}

\end{prop}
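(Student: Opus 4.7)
The plan is to reduce both identities to the pointwise formulas for $(\Delta-\partial_t)F$ already established in Lemma \ref{sLY lem2}, by multiplying by a time-dependent weight $\phi(t)$ and by $u$ itself and then invoking the heat equation to collapse the drift and linear-in-$F$ terms.

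I first observe a notational caveat: Lemma \ref{sLY lem2} is stated with $f=\ln u$, whereas Proposition \ref{entropy prop1} uses $f=-\ln u$, so under this sign change the Hessian $f_{ij}$ flips sign, the drift direction reverses, and the gradient satisfies $\nabla u=-u\nabla f$. Rerunning the Bochner-based computation of Lemma \ref{sLY lem2} with this new convention (using $f_t=\Delta f-|\nabla f|^2$ and the same scalar identities relating $\alpha,\alpha',\varphi,\varphi'$ verified in its proof) one obtains, in either case,
\[
(\Delta-\partial_t)F=2\bigl|f_{ij}-c(t)g_{ij}\bigr|^{2}+2\nabla f\cdot\nabla F+\beta(t)F+2(R_{ij}+kg_{ij})f_if_j,
\]
where $(c(t),\beta(t))=(\tfrac{1}{2t}+\tfrac{k}{2},\tfrac{2}{t})$ in case (1), and $(c(t),\beta(t))=(\tfrac{\varphi}{n},2k\coth(kt))$ in case (2).

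Next, since $u$ solves the heat equation, for any smooth $\phi(t)$ the product rule gives
\[
(\Delta-\partial_t)(\phi Fu)=\phi u\,(\Delta-\partial_t)F+2\phi\,\nabla F\cdot\nabla u-\phi' Fu=\phi u\,(\Delta-\partial_t)F-2\phi u\,\nabla F\cdot\nabla f-\phi' Fu.
\]
Substituting the previous display, the transport contributions $\pm 2\phi u\,\nabla f\cdot\nabla F$ cancel exactly, leaving
\[
(\Delta-\partial_t)(\phi Fu)=2\phi\bigl|f_{ij}-c(t)g_{ij}\bigr|^{2}u+2\phi(R_{ij}+kg_{ij})f_if_j\,u+\bigl(\beta(t)\phi-\phi'\bigr)Fu.
\]
The weight $\phi(t)$ prescribed in the proposition is precisely the one that kills the residual linear-in-$F$ term, namely the solution of the ODE $\phi'=\beta(t)\phi$: the choice $\phi(t)=t^{2}$ satisfies $\phi'/\phi=2/t$ for case (1), and $\phi(t)=\sinh^{2}(kt)$ satisfies $\phi'/\phi=2k\coth(kt)$ for case (2). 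Inserting these choices yields the two displayed identities.

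The main obstacle is really just bookkeeping: tracking the sign convention between Lemma \ref{sLY lem2} (where $f=\ln u$) and the proposition (where $f=-\ln u$), and recognizing that the two weights $t^2$ and $\sinh^2(kt)$ are exactly the solutions of the ODE $\phi'=\beta(t)\phi$ that eliminate the leftover $F$-term. Once these two observations are in hand, the proposition follows by the cancellation of the drift terms against the $\nabla F\cdot\nabla u$ produced by the product rule, combined with the scalar ODEs for $\alpha,\varphi$ already checked in the proof of Lemma \ref{sLY lem2}.
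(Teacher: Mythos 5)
Your proof is correct and takes essentially the same route as the paper's: both start by applying Lemma~\ref{sLY lem2} with the sign flip $f=-\ln u$, then use $u_t=\Delta u$ to reduce the weighted quantity $(\Delta-\partial_t)(\phi F u)$. Where the paper compresses the remainder into ``direct computations,'' you make the mechanism explicit: the product-rule term $2\phi\nabla F\cdot\nabla u=-2\phi u\,\nabla F\cdot\nabla f$ exactly cancels the drift term, and the weights $\phi=t^2$ and $\phi=\sinh^2(kt)$ are precisely the solutions of $\phi'=\beta\phi$ that annihilate the residual $F$-term. One small observation your derivation surfaces: after the sign flip the Hessian term in case (2) comes out as $\bigl|f_{ij}-\tfrac{\varphi}{n}g_{ij}\bigr|^2$, so the $+$ sign in the proposition's display for identity (2) is a typo (it should match the $-$ sign pattern of identity (1)).
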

\begin{proof}
We will only prove the first identity. The second one is similar. Apply Lemma \ref{sLY lem2} to $u=e^{-f}$, we get 
\[
 (\Delta -\partial_t)F=2|-f_{ij}+\frac{1}{2t}g_{ij}+\frac{k}{2}g_{ij}|^2+2\nabla F\nabla f+\frac{2}{t}F+2(R_{ij}+kg_{ij})f_if_j\]
 Using $u_t=\Delta u$ and direct computations, we can show that 
 \[
   (\Delta -\partial_t)t^2Fu=2t^2|f_{ij}-\frac{1}{2t}g_{ij}-\frac{k}{2}g_{ij}|^2u+2t^2(R_{ij}+kg_{ij})f_if_j.\]

\end{proof}

Consequently, the following estimates on closed
Riemannian manifolds hold.
\begin{theo}\label{closed manifold main}
Let $M^n$ be a compact Riemannian manifold possibly with boundary
and with $Ricci(M)\ge -k$. Suppose $u(x,t)$ is a positive solution
of the heat equation. If $\partial M\neq \emptyset$,  assume that
$\partial M$ is convex, and  $u(x,t)$ satisfies the Neumann boundary
condition
\[
\frac{\partial u}{\partial \nu}=0\quad {\rm on}\quad \partial
M\times(0,\infty),
\]
where $\frac{\partial u}{\partial \nu}$ denotes the outer normal of
$\partial M$. If we let $f=\ln u$, then we have \earr\label{main thm
closed}
 |\nabla f|^2 - (1+\frac{\sinh(kt)\cosh(kt)-kt}{\sinh^2(kt)}) f_t\le &\frac{nk}{2}\big[\coth (kt)+1\big] .\\
\eearr

On the other hand, the following linearized version is also true.
\earr\label{linear thm closed}
 |\nabla f|^2 - (1+\frac{2}{3}kt) f_t\le &\frac{n}{2t}+\frac{n}{2}k+\frac{n}{6}k^2t.\\
\eearr
\end{theo}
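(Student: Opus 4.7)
The plan is to deduce Theorem \ref{closed manifold main} directly from Proposition \ref{entropy prop1} via the parabolic maximum principle, using compactness of $M$ to eliminate the spatial cutoff that was needed in Theorem \ref{main thm}.

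For the sharp estimate \eqref{main thm closed}, set $U:=\sinh^2(kt)\,F\,u$ with $F=|\nabla f|^2-\alpha(t)f_t-\varphi(t)$ and $\alpha,\varphi$ as in Theorem \ref{main thm}. Identity \eqref{entropy prop1 identity 2} gives
\[
(\Delta-\partial_t)U=2\sinh^2(kt)\bigl|f_{ij}+\tfrac{\varphi}{n}g_{ij}\bigr|^2 u+2\sinh^2(kt)(R_{ij}+kg_{ij})f_if_j\,u,
\]
and under $Ricci\ge -k$ both terms on the right are nonnegative, so $U$ is a subsolution of the heat operator on the closed manifold $M$. The parabolic maximum principle then forces $\max_{x\in M}U(\cdot,t)$ to be nonincreasing in $t$. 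A direct bound gives $U=O(t)$ as $t\to 0^+$, since $\sinh^2(kt)=O(t^2)$ while $Fu=|\nabla u|^2/u-\alpha u_t-\varphi u$ is at worst $O(1/t)$ — the dominant singularity coming from $-\varphi(t)u\sim -nu/(2t)$ for bounded $u$. Hence $\max_M U(\cdot,t)\to 0$ as $t\to 0^+$, so $U\le 0$ on $M\times(0,\infty)$; dividing by the strictly positive factor $\sinh^2(kt)\,u$ yields \eqref{main thm closed}. The linearized statement \eqref{linear thm closed} is obtained by the identical argument with $V:=t^2Fu$ and identity \eqref{entropy prop1 identity 1}, with the same $O(t)$ vanishing at $t=0^+$.

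For the Neumann boundary case (compact $M$ with convex $\partial M$), the same argument works once $\partial U/\partial\nu\le 0$ is verified on $\partial M$. Differentiating $\partial u/\partial\nu=0$ in time gives $\partial u_t/\partial\nu=0$, and the standard boundary computation for a function with vanishing normal derivative yields $\partial|\nabla f|^2/\partial\nu=-2\,\mathrm{II}(\nabla f,\nabla f)$, where $\mathrm{II}$ is the second fundamental form of $\partial M$; convexity of $\partial M$ makes this $\le 0$, so $\partial F/\partial\nu\le 0$ and hence $\partial U/\partial\nu\le 0$, and Hopf's lemma prevents a positive spatial maximum on $\partial M$. The step I expect to be most delicate is the $t\to 0^+$ limit for a positive heat solution not assumed smooth up to $t=0$, where the $O(1/t)$ bound on $F$ needs justification from parabolic regularity; this is handled by running the maximum principle on $M\times[\varepsilon,T]$ and sending $\varepsilon\to 0$ after noting that the claimed estimate does not depend on initial data.
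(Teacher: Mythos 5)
Your proof is correct, but it follows a route that differs from the one the paper indicates. The paper points to the ``standard parabolic Maximum Principle (cf.\ [SY])'' argument, which, following the template of the proof of Theorem \ref{main thm}, would apply the maximum principle to $F=|\nabla f|^2-\alpha(t)f_t-\varphi(t)$ (or to $\beta(t)F$ with $\beta=\tanh(kt)$) using Lemma \ref{sLY lem2}: there the evolution equation for $F$ carries the zeroth--order term $2k\coth(kt)F$ with strictly positive coefficient, so at any positive interior space--time maximum of $F$ one gets $0\ge 2k\coth(kt_0)F(x_0,t_0)>0$, a contradiction, and the case $t_0\to 0^+$ is ruled out because $\varphi(t)\to\infty$ forces $F(\cdot,t)<0$ for small $t$. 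You instead pass to the $u$--weighted quantity $U=\sinh^2(kt)Fu$ (resp.\ $t^2Fu$) and invoke Proposition \ref{entropy prop1}, which trades the drift term $-2\nabla F\cdot\nabla f$ and the zeroth--order term away for a pure subsolution $(\Delta-\partial_t)U\ge 0$, and then closes the argument with the nonincreasing--maximum observation plus the $t\to 0^+$ limit $U=O(t)$. Both are legitimate maximum--principle proofs; your version is structurally cleaner (no drift, no choice of $\beta$, and it reuses exactly the divergence--form identity the paper introduces for the entropy monotonicity in Theorem \ref{Li-Yau entropy thm1}), at the cost of needing the small--time asymptotics of $U$, which you correctly flag as the delicate point and which requires either smooth bounded initial data or the time--shift reduction you outline. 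Your treatment of the Neumann/convex boundary case via $\partial_\nu|\nabla f|^2=-2\,\mathrm{II}(\nabla f,\nabla f)\le 0$ and Hopf's lemma is the standard one and is right. Two small caveats: (i) Proposition \ref{entropy prop1} as printed omits a factor of $u$ in the Ricci term (compare with Proposition \ref{prop2.1}), but your use of it implicitly assumes the corrected form, which is what the computation actually gives; and (ii) when you pass from ``$\max_M U$ is nonincreasing'' to ``$U\le 0$'', remember to first run the argument on $M\times[\varepsilon,T]$ and let $\varepsilon\to 0^+$ precisely because the interior regularity of $u$ guarantees $U(\cdot,\varepsilon)$ is finite but does not by itself control $U$ at $t=0$ for general initial data.
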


\begin{proof}
The proof is by the standard parabolic Maximum Principle (cf. \cite{SY}). We will
skip the details.
\end{proof}

To prove the main theorem Theorem \ref{main thm}, we need the following technical lemma.
\begin{lemm}\label{Lemma
main theorem} For $x>0$, the following is true,
\[
4(\alpha-1)\varphi-n\alpha^2\big(2k\coth(x)-\frac{k}{\cosh(x)\sinh(x)}\big)\le
0,
\]
where $\alpha(x)=1+\frac{\sinh(x)\cosh(x)-x}{\sinh^2(x)}$ and
$\varphi(x)=\frac{nk}{2}\Big(\coth(x)+1\Big)$.
\end{lemm}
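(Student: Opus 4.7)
The plan is to substitute the explicit formulas for $\alpha(x)$ and $\varphi(x)$, clear denominators, and reduce the claim to a purely algebraic inequality in $s := \sinh(x)$ and $c := \cosh(x)$ and $x$. Writing $\alpha(x) - 1 = (sc - x)/s^2$ and $\varphi(x) = nk(c+s)/(2s)$, and using $2\cosh^2 x - 1 = \cosh^2 x + \sinh^2 x$ to rewrite $2k\coth x - k/(\cosh x \sinh x)$ as $k(c^2+s^2)/(cs)$, the case $k=0$ becomes trivial. Assuming $k>0$ and multiplying through by the positive factor $s^5 c /(nk)$, the inequality to prove will take the clean form
$$(c^2+s^2)(s^2+sc-x)^2 \;-\; 2 s^2 c (c+s)(sc-x) \;\ge\; 0.$$

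Next I would introduce the auxiliary variable $A := sc - x$, which is strictly positive for $x > 0$ since $\sinh(2x)/2 > x$ on $(0,\infty)$. Expanding $(s^2+A)^2$ and collecting, the left-hand side becomes a quadratic in $A$,
$$(c^2+s^2)\,A^2 \;+\; 2 s^3(s-c)\,A \;+\; (c^2+s^2)\,s^4,$$
whose leading coefficient $(c^2+s^2)$ is positive. It therefore suffices to show that the discriminant of this quadratic is strictly negative; then the quadratic is strictly positive for every real value of $A$, and in particular at the relevant value $A = sc - x$.

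The discriminant equals $4 s^4 \bigl[s^2(s-c)^2 - (c^2+s^2)^2\bigr]$, so the whole argument reduces to showing $s^2(s-c)^2 - (c^2+s^2)^2 < 0$ for $x > 0$. This is the one step where care is needed, and I expect it to be the main (though still elementary) obstacle. Using $c^2 - s^2 = 1$, a direct expansion should yield the factorization $-\,c\,(2s^3 + s^2 c + c^3)$, which is manifestly negative. With this in hand, the discriminant is negative, the quadratic is positive, and the lemma follows. The argument is purely algebraic and relies neither on calculus nor on series expansion, which is what makes the substitution $A = sc - x$ the right move.
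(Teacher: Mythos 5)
Your proof is correct, and it takes a genuinely different route from the paper's. The paper rewrites everything in terms of $e^{2x}$, multiplies through, and completes squares to arrive at
\[
-e^{4x}(\alpha-1)^2-\bigl[e^{2x}-(\alpha-1)\bigr]^2+(\alpha-1)^2-\alpha^2\le 0,
\]
which then reduces to the simple observation $0\le \alpha-1\le\alpha$. In contrast, you work with $s=\sinh x$, $c=\cosh x$, isolate $A=sc-x$, and observe that the cleared inequality is a quadratic in $A$ with positive leading coefficient $(c^2+s^2)$ and negative discriminant. Your key algebraic step,
\[
s^2(s-c)^2-(c^2+s^2)^2 = -\,c\,(2s^3+s^2c+c^3) < 0 \quad (x>0),
\]
checks out by direct expansion. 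One nice feature of your argument is that it proves the quadratic is positive for \emph{all} real $A$, so no estimate on the size or sign of $sc-x$ is ever needed; the paper's argument, by contrast, does lean on $\alpha-1\ge 0$, i.e.\ on $sc-x\ge 0$. Your route is a bit heavier in the expansion stage, whereas the paper's exponential substitution makes the completion of squares fall out more quickly, but both are short, elementary, and rigorous.
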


\begin{proof}

Equivalently, we need to prove
\[
\begarr
2(\alpha-1)(\frac{\cosh(x)}{\sinh(x)}+1)-\alpha^2\frac{2\cosh^2(x)-1}{\cosh(x)\sinh(x)}\le& 0\\
\\
2(\alpha-1)\cdot (e^{2x}+1)-\alpha^2(e^{2x}+e^{-2x})\le& 0\\
\\
2(\alpha-1)\cdot (e^{4x}+e^{2x})-\alpha^2(e^{4x}+1)\le& 0\\
\\
-e^{4x}(\alpha-1)^2-e^{4x}+2(\alpha-1)e^{2x}-\alpha^2\le&0\\
\\
-e^{4x}(\alpha-1)^2-\big[e^{2x}-(\alpha-1)\big]^2+(\alpha-1)^2-\alpha^2\le&0
\endarr
\]
Since $\alpha=1+\frac{\sinh(x)\cosh(x)-x}{sinh^2(x)}$, we get
$0\le\alpha-1\le \alpha$. Hence the last inequality is true, which
finishes the proof of this lemma.
\end{proof}

We now prove the main theorem,  Theorem \ref{main thm}.\\

\begin{proof}{\bf(Proof of Theorem \ref{main thm})}
Let's denote $F=|\nabla f|^2-\alpha(t)f_t-\varphi(t)$ and
$G=\beta(t)F$, where $\alpha(t)$ and $\varphi(t)$ are defined as in
the main theorem and $\beta(t)$ is a positive function of $t$ to be determined.
Applying $(1)$ in Lemma \ref{sLY lem2} to $G=\beta(t)F$, we obtain
the following \earr\label{equation for G}
(\Delta -\partial_t)G=&2\beta|f_{ij}+\frac{\varphi}{n}g_{ij}|^2-2\nabla G\nabla f+G\big(2k\coth(kt)-\frac{\beta'}{\beta}\big)+2\beta (R_{ij}-k)f_if_j\\
\ge&2\beta|f_{ij}+\frac{\varphi}{n}g_{ij}|^2-2\nabla G\nabla
f+G\big(2k\coth(kt)-\frac{\beta'}{\beta}\big).

\eearr \\

Recall that one can construct a cut-off function $\phi$ as in the
proof of Theorem 4.2 in \cite{SY}, which satisfies {\em supp}
$\phi\subset B_{2R}$ and $\phi\big|B_R\equiv 1$. Moreover,
\earr\label{local version main proof equ1}
\frac{|\nabla \phi|^2}{\phi}&\le& \frac{C}{R^2}\\
\\
\Delta \phi&\ge&-\frac{C}{R^2}\Big(1+\sqrt k\cdot R\coth(\sqrt
k\cdot R)\Big), \eearr 
where $C$ depends only on $n$. We want to apply maximum principle to $\phi
G$ on $B_{2R}\times[0,T]$. If $\phi G$ attains its maximum at
$(x_0,t_0)\in B_{2R}\times [0,T]$, then $(\phi G)(x_0,t_0)>0$
without loss of generality. So $x_0\in B_{2R}$, $t_0>0$, and by
maximum principle, at $(x_0,t_0)$, \earr\label{local version main
proof equ2}
0=\nabla(\phi G)&=&G\nabla \phi +\phi \nabla G\\
\\
\Delta(\phi G)&\le&0\\
\\
\partial_t(\phi G)&=&\phi G_t\ge0.
\eearr

Notice that for an $n\times n$ matrix $A$, we have
$|A|^2\geq\frac{1}{n}\Big({\bf tr} A\Big)^2$, and
\begin{eqnarray}
\qquad {\bf tr}\big(f_{ij}+\frac{\varphi}{n}g_{ij}\big)=\Delta
f+\varphi=-\frac{1}{\alpha}\Big[\frac{G}{\beta}+(\alpha -1)(|\nabla
f|^2-\varphi)\Big] \label{trace}
\end{eqnarray}

In the sequel, all computations will be at the maximal point
$(x_0,t_0)$ and we will frequently use property (\ref{local version
main proof equ2}) whenever necessary. Applying (\ref{equation for
G}), we have \earr\nonumber
0\ge& (\Delta-\partial_t) (\phi G)= G\Delta\phi -2G\frac{|\nabla \phi|^2}{\phi}+\phi (\Delta -\partial_t) G\\
\\
\ge &G\Big(\Delta\phi -2\frac{|\nabla
\phi|^2}{\phi}\Big)-2G\nabla\phi \nabla f
+2\phi\beta\big|f_{ij}+\frac{\varphi}{n}g_{ij}\big|^2+\phi G\cdot
\big(2k\coth(kt)-\frac{\beta'}{\beta}) \eearr

Multiplying by $\phi$, and applying (\ref{trace}), we have
\earr\nonumber
0\ge &(\phi G)\Big(\Delta\phi -2\frac{|\nabla \phi|^2}{\phi}+ 2k\coth(kt)\phi-\frac{\beta'}{\beta}\Big)-2(\phi G)|\nabla \phi||\nabla f|\\
&+\frac{2\phi^2\beta}{n\alpha^2} \Big[\frac{G}{\beta}+(\alpha -1)(|\nabla f|^2-\varphi)\Big]^2\\
\\
=&(\phi G)\Big(\Delta\phi -2\frac{|\nabla \phi|^2}{\phi}+ 2k\coth(kt)\phi-\frac{\beta'}{\beta}\Big)-2(\phi G)|\nabla \phi||\nabla f|\\
\\
&+\frac{2\phi^2}{n\alpha^2\beta}G^2+ \frac{2\phi^2(\alpha -1)^2\beta}{n\alpha^2}(|\nabla f|^2-\varphi)^2+(\phi G)\cdot\frac{4\phi(\alpha-1)}{n\alpha^2}(|\nabla f|^2-\varphi)\\
\\
=&(\phi G)\Big(\Delta\phi -2\frac{|\nabla \phi|^2}{\phi}+
2k\coth(kt)\phi-\frac{\beta'}{\beta}-\frac{4\phi(\alpha-1)}{n\alpha^2}\varphi\Big)+
\frac{2\phi^2}{n\alpha^2\beta}G^2\\
\\
&+ \frac{2\phi^2(\alpha -1)^2\beta}{n\alpha^2}(|\nabla f|^2-\varphi)^2+(\phi G)\cdot(\frac{4\phi(\alpha-1)}{n\alpha^2}|\nabla
f|^2-2|\nabla \phi||\nabla f|) \eearr

Applying inequality $ax^2-bx\ge -\frac{b^2}{4a}$, $(a>0)$, to the
last term, and also drop the second last term which is nonnegative,
we get \earr\nonumber 0\ge &(\phi G)\Big(\Delta\phi -2\frac{|\nabla
\phi|^2}{\phi}+
2k\coth(kt)\phi-\frac{\beta'}{\beta}-\frac{4\phi(\alpha-1)}{n\alpha^2}\varphi-\frac{n\alpha^2|\nabla
\phi|^2}{4\phi(\alpha-1)}\Big)+\frac{2}{n\alpha^2\beta}(\phi G)^2.
\eearr

Since $(\phi G)(x_0,t_0)>0$, we get
\earr
(\phi G)(x_0,t_0)\leq&
\frac{\beta}{2}\Big[4(\alpha-1)\varphi-n\alpha^2\big(2k\coth(kt)-\frac{\beta'}{\beta}\big)\Big]\phi\\
&+\frac{n\alpha^2\beta}{2}\Big(-\Delta\phi
+2\frac{|\nabla \phi|^2}{\phi}+\frac{n\alpha^2|\nabla
\phi|^2}{4\phi(\alpha-1)} \Big),
\eearr
where the righthand side is evaluated at $(x_0,t_0)$ which depends
on the function.

Hence on $B_R\times [0,T]$, applying estimates (\ref{local version
main proof equ1}) on $\phi$, we have \earr\label{local version main
proof equ3}
G(x,t)\le& (\phi G)(x_0,t_0)\\
\\
\le&\frac{\beta}{2}\Big[4(\alpha-1)\varphi-n\alpha^2\big(2k\coth(kt)-\frac{\beta'}{\beta}\big)\Big]\phi
\\
&+\frac{n\alpha^2\beta}{2}\Big(\frac{3C}{R^2}+\frac{C\sqrt
k}{R}\coth(\sqrt k\cdot
R)+\frac{n\alpha^2}{4(\alpha-1)}\frac{C}{R^2} \Big). \eearr

Next, we choose $\beta(t)=\tanh(kt)$. Hence,
$\frac{\beta'}{\beta}=\frac{k}{\sinh(kt)\cosh(kt)}$. Denote $x=kt$
and applying Lemma \ref{Lemma main theorem}, we have
\earr\label{local version main proof equ4}
4(\alpha-1)\varphi-n\alpha^2\big(2k\coth(kt)-\frac{\beta'}{\beta}\big)\le
0. \eearr

On the other hand, by definitions,
$\beta(t)=\tanh(kt)$, $\alpha(t)-1=\frac{\sinh(kt)\cosh(kt)-kt}{\sinh^2(kt)}$, as $t\rightarrow 0$, we have $\frac{\beta\alpha^4}{\alpha-1}\rightarrow 2$; as $t\rightarrow
\infty$,  we have $\frac{\beta\alpha^4}{\alpha-1}\rightarrow 1$. This
implies
\begin{equation}\label{local version main proof equ4.1}\frac{\beta\alpha^4}{\alpha-1}\le C,\end{equation}
where $C$ is a universal constant.

Recall that all the computations are at $(x_0,t_0)$ and $(x_0,t_0)$
is the maximum point, $t_0\le T$ and $\beta(t)=\tanh(kt)$ is
non-decreasing. Plug (\ref{local version main proof equ4}) and
(\ref{local version main proof equ4.1}) into (\ref{local version
main proof equ3}), we get
\[
\begarr (\phi G)(x,T)\le &(\phi
G)(x_0,t_0)\le\frac{n\alpha^2(t_0)\beta(t_0)}{2}\Big(\frac{3C}{R^2}+\frac{C\sqrt
k}{R}\coth(\sqrt k\cdot R)\Big)+\frac{n^2C}{R^2}
\\
\le&\beta(T)\Big(\frac{nC}{R^2}+\frac{nC\sqrt k}{R}\coth(\sqrt
k\cdot R)\Big)+\frac{n^2C}{R^2},
\endarr
\]
where in the last inequality, we have used the fact that $\alpha(t)$ is uniformly bounded over $(0,\infty)$. 
But $\phi\equiv1$ on $B_R$, hence, from $G=\beta F$, we have
\[
\begarr \sup_{B_R}F(x,T)\le&\frac{nC}{R^2}+\frac{nC\sqrt
k}{R}\coth(\sqrt k\cdot R)+\frac{n^2C}{R^2\tanh(kT)},
\endarr
\]
since $T$ is arbitrary, the theorem has been proved.

\end{proof}

Similarly, choosing a different set of $\alpha(t)$ and $\varphi(t)$,
one can prove the linearized local version, Theorem \ref{linear thm}.

\begin{proof}{\bf(Proof of Theorem \ref{linear thm})}
The proof is similar to the proof of Theorem \ref{main thm}. The
difference is the choices of $\varphi(t)$ and $\alpha(t)$. Denote
$F=|\nabla f|^2-\alpha(t)f_t-\varphi(t)$ and $G=\beta F$, where
$\alpha(t)=1+\frac{2}{3}kt$ and
$\varphi(t)=\frac{n}{2t}+\frac{nk}{2}(1+\frac{1}{3}kt)$ are defined
as in Theorem \ref{linear thm}. Applying $(1)$ of Lemma \ref{sLY
lem2} to $G=tF$, we obtain the following \earr\label{equation for G
linear}
(\Delta -\partial_t)G=&2\beta|f_{ij}+(\frac{1}{2t}+\frac{k}{2})g_{ij}|^2-2\nabla G\nabla f+\Big(\frac{2}{t}-\frac{\beta'}{\beta}\Big)G+2\beta (R_{ij}-k)f_if_j\\
\ge&2\beta|f_{ij}+(\frac{1}{2t}+\frac{k}{2})g_{ij}|^2-2\nabla
G\nabla f+\Big(\frac{2}{t}-\frac{\beta'}{\beta}\Big)G.

\eearr \\

Construct the cut-off function $\phi$ as before, which satisfies
{\em supp} $\phi\subset B_{2R}$ and $\phi\big|B_R\equiv 1$.
Moreover, \earr\label{local version linear proof equ1}
\frac{|\nabla \phi|^2}{\phi}&\le& \frac{C}{R^2}\\
\\
\Delta \phi&\ge&-\frac{C}{R^2}\Big(1+\sqrt k\cdot R\coth(\sqrt
k\cdot R)\Big). \eearr

Apply maximum principle to $\phi G$ on $B_{2R}\times[0,T]$. If $\phi
G$ attains its maximum at $(x_0,t_0)\in B_{2R}\times [0,T]$, then
$(\phi G)(x_0,t_0)>0$ without loss of generality. So $x_0\in
B_{2R}$, $t_0>0$, and by maximum principle, at $(x_0,t_0)$
\earr\label{local version linear proof equ2}
0=\nabla(\phi G)&=&G\nabla \phi +\phi \nabla G\\
\\
\Delta(\phi G)&\le&0\\
\\
\partial_t(\phi G)&=&\phi G_t\ge0.
\eearr

Using the matrix inequality $|A|^2\geq\frac{1}{n}\Big({\bf tr}
A\Big)^2$, we have
\begin{equation} \label{trace linear}
\qquad {\bf
tr}\big(f_{ij}+(\frac{1}{2t}+\frac{k}{2})g_{ij}\big)=\Delta
f+\frac{n}{2}(\frac{1}{t}+k)=-\frac{1}{\alpha}\Big[\frac{G}{\beta}+(\alpha
-1)|\nabla f|^2-(\frac{nk}{3}+\frac{1}{6}nk^2t)\Big]
\end{equation}

In the sequel, all computations will be at the maximal point
$(x_0,t_0)$. Applying (\ref{equation for G linear}), we have
\earr\nonumber
0\ge& (\Delta-\partial_t) (\phi G)= G\Delta\phi -2G\frac{|\nabla \phi|^2}{\phi}+\phi (\Delta -\partial_t) G\\
\\
\ge &G\Big(\Delta\phi -2\frac{|\nabla
\phi|^2}{\phi}\Big)-2G\nabla\phi \nabla f +2\phi
\beta\big|f_{ij}+(\frac{1}{2t}+\frac{k}{2})g_{ij}\big|^2+\Big(\frac{2}{t}-\frac{\beta'}{\beta}\Big)\phi
G \eearr

Multiplying by $\phi$, and applying (\ref{trace linear}), we have
\earr\nonumber
0\ge &(\phi G)\Big(\Delta\phi -2\frac{|\nabla \phi|^2}{\phi}+ \big(\frac{2}{t}-\frac{\beta'}{\beta}\big)\phi\Big)-2(\phi G)|
\nabla \phi||\nabla f|\\
&+\frac{2\phi^2\beta}{n\alpha^2} \Big[\frac{G}{\beta}+(\alpha -1)|\nabla f|^2-(\frac{nk}{3}+\frac{1}{6}nk^2t)\Big]^2\\
\\
=&(\phi G)\Big(\Delta\phi -2\frac{|\nabla \phi|^2}{\phi}+ \big(\frac{2}{t}-\frac{\beta'}{\beta}\big)
\phi\Big)-2(\phi G)|\nabla \phi||\nabla f|+\frac{2\phi^2}{n\alpha^2\beta}G^2\\
\\
&+ \frac{2\phi^2\beta}{n\alpha^2}\Big[(\alpha -1)|\nabla f|^2-(\frac{nk}{3}+\frac{1}{6}nk^2t) \Big]^2+(\phi G)\cdot\frac{4\phi}{n\alpha^2}\Big[(\alpha -1)|\nabla f|^2-(\frac{nk}{3}+\frac{1}{6}nk^2t) \Big]\\
\\
=&(\phi G)\Big(\Delta\phi -2\frac{|\nabla \phi|^2}{\phi}+
\big(\frac{2}{t}-\frac{\beta'}{\beta}\big)\phi-\frac{4}{n\alpha^2}(\frac{nk}{3}+\frac{1}{6}nk^2t)\phi\Big)+
\frac{2\phi^2}{n\alpha^2\beta}G^2\\
\\&+ \frac{2\phi^2\beta}{n\alpha^2}\Big[(\alpha -1)|\nabla f|^2-(\frac{nk}{3}+\frac{1}{6}nk^2t) \Big]^2
+(\phi G)\cdot(\frac{4\phi(\alpha-1)}{n\alpha^2}|\nabla
f|^2-2|\nabla \phi||\nabla f|) \eearr

Applying inequality $ax^2-bx\ge -\frac{b^2}{4a}$, $(a>0)$, to the
last term, and also drop the second last term which is nonnegative,
we get \earr\nonumber 0\ge &(\phi G)\Big(\Delta\phi -2\frac{|\nabla
\phi|^2}{\phi}+
\big(\frac{2}{t}-\frac{\beta'}{\beta}\big)\phi-\frac{2}{\alpha^2}(\frac{2k}{3}+\frac{1}{3}k^2t)\phi
-\frac{n\alpha^2|\nabla
\phi|^2}{4\phi(\alpha-1)}\Big)+\frac{2}{n\alpha^2\beta}(\phi G)^2.
\eearr

Since $(\phi G)(x_0,t_0)>0$, we get
\begin{eqnarray}
(\phi G)(x_0,t_0)\leq
\frac{n\alpha^2\beta}{2}\Big[\big(\frac{\beta'}{\beta}-\frac{2}{t}\big)+\frac{2}{\alpha^2}(\frac{2k}{3}+\frac{1}{3}k^2t)
\Big]\phi+ \frac{n\alpha^2\beta}{2}\Big(-\Delta\phi +2\frac{|\nabla
\phi|^2}{\phi}+\frac{n\alpha^2|\nabla \phi|^2}{4\phi(\alpha-1)}
\Big)\nonumber,
\end{eqnarray}
where the righthand side is evaluated at $(x_0,t_0)$ which depends
on the function.

Hence on $B_R\times [0,T]$, applying estimates (\ref{local version
main proof equ1}) on $\phi$, we have \earr\label{local version
linear proof equ3}
G(x,t)\le& (\phi G)(x_0,t_0)\\
\\
\le&\frac{n\beta}{2t}\Big[t\alpha^2\big(\frac{\beta'}{\beta}-\frac{2}{t}\big)+2(\frac{2kt}{3}+\frac{1}{3}k^2t^2)\Big]\phi
+\frac{n\alpha^2\beta}{2}\Big(\frac{3C}{R^2}+\frac{C\sqrt
k}{R}\coth(\sqrt k\cdot
R)+\frac{n\alpha^2}{4(\alpha-1)}\frac{C}{R^2} \Big). \eearr

Next, we choose $\beta(t)=\tanh(kt)$. Then,
$\frac{\beta'}{\beta}=\frac{k}{\sinh(kt)\cosh(kt)}$. Denote $x=kt$
and recall $\alpha(t)=1+\frac{2}{3}kt$. It is not hard to show that
for $x>0$, (see the comments after the proof of this theorem), \earr\label{local version linear proof equ4}

(\frac{x}{\sinh(x)\cosh(x)}-2)(1+\frac{4}{3}x+\frac{4}{9}x^2)+(\frac{4}{3}x+\frac{6}{9}x^2))\le
0, \eearr this yields that
\[
t\alpha^2\big(\frac{\beta'}{\beta}-\frac{2}{t}\big)+2(\frac{2kt}{3}+\frac{1}{3}k^2t^2)\le0.
\]

On the other hand, by definitions, as $t\rightarrow 0$,
$\beta(t)=O(t)$ and $\alpha(t)-1=\frac{2}{3}kt$. This implies
\begin{equation}\label{local version linear proof equ4.1}\frac{\beta}{\alpha-1}\le C,\end{equation}
where $C$ is a constant.

Recall that all the computations are at $(x_0,t_0)$ and $(x_0,t_0)$
is the maximum point, $t_0\le T$, $\alpha(t)=1+\frac{2}{3}kt$ and
$\beta(t)=\tanh(kt)$ are non-decreasing. Plug (\ref{local version
linear proof equ4}) and (\ref{local version linear proof equ4.1})
into (\ref{local version linear proof equ3}), we get
\[\begarr
(\phi G)(x,T)\le&(\phi G)(x_0,t_0)\le
\frac{n\alpha^2(t_0)\beta(t_0)}{2}\Big(\frac{3C}{R^2}+\frac{C\sqrt
k}{R}\coth(\sqrt k\cdot
R)\Big)+\frac{Cn^2\alpha^4(t_0)}{R^2} \\
\le&\frac{n\alpha^2(T)\beta(T)}{2}\Big(\frac{3C}{R^2}+\frac{C\sqrt
k}{R}\coth(\sqrt k\cdot R)\Big)+\frac{Cn^2\alpha^4(T)}{R^2}.
\endarr\]  Since $\phi\equiv1$ on $B_R$, we obtain the
following gradient estimate in $B_R$
\[
\begarr
\sup_{B_R}F(x,T)\le&\frac{C\alpha^2(T)}{R^2}+\frac{C\alpha^2(T)\sqrt
k}{R}\coth(\sqrt k\cdot R)+\frac{C\alpha^4(T)}{R^2\tanh(kT)}.
\endarr
\]
Since $T$ is arbitrary, we proved the first part of the theorem.

If the manifold is complete, for any fixed $T>0$, letting
$R\rightarrow \infty$, we get
\[
\begarr F(x,T)\le&0.
\endarr
\]
Since $T$ is arbitrary, equivalently, we obtain the global estimate
(\ref{linear thm equ3}).
\end{proof}

One way of proving (\ref{local version linear proof equ4})
is to prove an equivalent inequality as follows, \earr\label{local
version linear proof equ4.2}

I(x):=(e^{2x}-e^{-2x})(1+\frac{2}{3}x+\frac{1}{9}x^2)-2x(1+\frac{4}{3}x+\frac{4}{9}x^2)\ge
0, \eearr where function $I(x)$ is a real analytic function and all
the coefficients of its Taylor expansion are positive.
\section{\bf Harnack inequality and heat kernel estimates} \label{sec Harnack}
Along the line of Li-Yau, as an application of the gradient estimates in section \ref{sec gradient}, we can establish Harnack inequalities for positive solutions of the heat equation and deduce lower and upper bounds for the heat kernel. \\

We prove the Harnack inequality for noncompact manifold first.

\begin{proof}{\bf(Proof of Theorem \ref{Li-Yau Harnack thm1 local} and Theorem \ref{Li-Yau Harnack thm1
linear})} Let $f=\ln u$, then combining estimate (\ref{main thm
equ3}) and the heat equation, we have the following Hamilton-Jacobi
inequality,
\[
f_t\ge -\frac{1}{\alpha}(\varphi(t)-|\nabla f|^2).
\]

Let $\gamma $ be a shortest geodesic joining $x_1$ and $x_2$, $\gamma:[0,1]\rightarrow M$, $\gamma(0)=x_2$ $\gamma(1)=x_1$. Define a curve $\eta$ in $M\times (0,\infty)$, $\eta  :[0,1]\rightarrow M\times (0,\infty)$ by $\eta(s)=\big( \gamma(s), (1-s)t_2+st_1 \big)$. We have $\eta(0)=(x_2,t_2)$, $\eta(1)=(x_1,t_1)$.  If $\rho=d(x_1,x_2)$, then $|\dot \gamma|=\rho$. We have
\[
\begarr
f(x_1,t_1)-f(x_2,t_2)&=&\Dint_{0}^{1}\frac{d}{ds}f(\eta(s))ds\\
\\
&=&\Dint_{0}^{1}\Big(\<\dot \gamma,\nabla f\>-(t_2-t_1)f_t\Big)ds\\
\\
&\le&\Dint_{0}^{1}\Big(\rho|\nabla f|+\frac{t_2-t_1}{\alpha}(\varphi(t)-|\nabla f|^2)\Big)ds,
\endarr
\]
where $t=(1-s)t_2+st_1$.

The integrand is a quadratic polynomial in $|\nabla f|$, whose maximum value is
\[
\frac{\alpha \rho^2}{4(t_2-t_1)}+\frac{t_2-t_1}{\alpha}\varphi(t).
\]

Therefore, we obtain
\[
\begarr
f(x_1,t_1)-f(x_2,t_2)&\le&\Dint^{1}_{0}\Big(\frac{\rho^2}{4(t_2-t_1)}
\alpha(t)+\frac{t_2-t_1}{\alpha}\varphi(t)\Big)ds\\
\\
&=&\frac{\rho^2}{4(t_2-t_1)^2}\Dint^{t_2}_{t_1}
\alpha(t)dt+\Dint^{t_2}_{t_1}\frac{\varphi(t)}{\alpha}dt\\
\\
&=& \Big[\frac{\rho^2}{4(t_2-t_1)^2}\big(t+\frac{kt\coth(kt)-1}{k}\big)+\frac{n}{4}\ln\frac{ \sinh(2kt)-2kt+\cosh(2kt)-1}{2k}\Big]\Bigg|^{t_2}_{t_1},
\endarr
\]
where in the second identity we have used $t({s=0})=t_2$,
$t({s=1})=t_1$, $dt=-(t_2-t_1)ds$, and we have chosen
$\alpha(t)=1+\frac{\sinh(kt)\cosh(kt)-kt}{\sinh^2(kt)}$ and
$\varphi(t)=\frac{nk}{2}\big[\coth (kt)+1\big]$. Taking the
exponential of both sides and flip the quotient, we finish the proof
of Theorem \ref{Li-Yau Harnack thm1 local}. Similarly, choosing
$\alpha=1+\frac{2}{3}kt$ and
$\varphi(t)=\frac{n}{2t}+\frac{nk}{2}(1+\frac{1}{3}kt)$, we prove
Theorem \ref{Li-Yau Harnack thm1 linear}.

\end{proof}

Theorem \ref{Li-Yau Harnack thm1 local} extends Li-Yau Harnack estimate in several ways. When $k=0$, we recover Li-Yau's theorem of the case $Ricci(M)\ge0$. When $Ricci(M)\ge-k$ and $k>0$, this theorem finds the explicit form for Li-Yau's original theorem without parameters and it improves previous estimates.  When $Ricci(M)>0$, this formula is new. In the last case, the Harnack is true only for short time.\\

For a positive solution on compact manifolds, the Harnack inequality given by Theorem \ref{Li-Yau Harnack thm1 local} also holds.
\\

\begin{theo}\label{Li-Yau Harnack thm1 compact}
If $M$ is a compact Riemannian manifold possibly with boundary and
with $Ricci(M)\ge -k$. If $\partial M\neq \emptyset$, we assume
$\partial M$ is convex. $u(x,t)$ is a positive solution of the heat
equation on $M$, and $\frac{\partial u}{\partial \nu}=0$ if
$\partial M\neq \emptyset$. \begeq u(x_1,t_1)\le
u(x_2,t_2)A_1(t_1,t_2)\cdot \exp\Bigg[
\frac{dist^2(x_2,x_1)}{4(t_2-t_1)}(1+A_2(t_1,t_2)) \Bigg]
\endeq
where $x_1,x_2\in M$, $0<t_1<t_2<\infty$, $dist(x_1,x_2)$ is the
distance between $x_1$ and $x_2$,
$A_1=\Big(\frac{ e^{2kt_2}-2kt_2-1}{ e^{2kt_1}-2kt_1-1}\Big)^\frac{n}{4} $, and
$A_2(t_1,t_2)=\frac{t_2\coth (kt_2)-t_1\coth(kt_1)}{t_2-t_1}$.\\

\end{theo}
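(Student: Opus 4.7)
The plan is to mirror the proof of Theorem \ref{Li-Yau Harnack thm1 local}, but start from the global gradient estimate already established for compact manifolds in Theorem \ref{closed manifold main}, namely
\[
|\nabla f|^2 - \alpha(t) f_t \le \varphi(t)
\]
with $\alpha(t)=1+\frac{\sinh(kt)\cosh(kt)-kt}{\sinh^2(kt)}$ and $\varphi(t)=\frac{nk}{2}[\coth(kt)+1]$, where $f=\ln u$. Rewriting this as a Hamilton--Jacobi type inequality gives
\[
f_t \ge -\frac{1}{\alpha(t)}\bigl(\varphi(t) - |\nabla f|^2\bigr),
\]
valid on all of $M\times(0,\infty)$, including $\partial M$ in the boundary case (since the Neumann condition and the convexity of $\partial M$ preserve the estimate up to the boundary by the standard maximum principle argument in Theorem \ref{closed manifold main}).

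Next, I would join $x_1,x_2\in M$ by a length-minimizing geodesic $\gamma:[0,1]\to M$ with $\gamma(0)=x_2$, $\gamma(1)=x_1$. Here the convexity of $\partial M$ is used crucially: it guarantees that such a minimizing geodesic can be chosen to lie entirely in $M$, so that $|\dot\gamma|\equiv \rho:=\mathrm{dist}(x_1,x_2)$. Define the lifted path $\eta(s)=\bigl(\gamma(s),(1-s)t_2+st_1\bigr)$ in $M\times(0,\infty)$ from $(x_2,t_2)$ to $(x_1,t_1)$. Then, using the Hamilton--Jacobi inequality along $\eta$,
\[
f(x_1,t_1)-f(x_2,t_2)=\int_0^1 \bigl(\langle \dot\gamma,\nabla f\rangle - (t_2-t_1)f_t\bigr)\,ds \le \int_0^1 \Bigl(\rho|\nabla f| + \frac{t_2-t_1}{\alpha(t)}\bigl(\varphi(t)-|\nabla f|^2\bigr)\Bigr)ds.
\]
Maximizing the quadratic in $|\nabla f|$ pointwise yields the bound $\frac{\alpha(t)\rho^2}{4(t_2-t_1)}+\frac{t_2-t_1}{\alpha(t)}\varphi(t)$.

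The remaining step is the change of variables $t=(1-s)t_2+st_1$, $dt=-(t_2-t_1)\,ds$, which converts the $s$-integral to a $t$-integral over $[t_1,t_2]$:
\[
f(x_1,t_1)-f(x_2,t_2) \le \frac{\rho^2}{4(t_2-t_1)^2}\int_{t_1}^{t_2}\alpha(t)\,dt + \int_{t_1}^{t_2}\frac{\varphi(t)}{\alpha(t)}\,dt.
\]
Direct integration gives $\int \alpha(t)\,dt = t+\frac{kt\coth(kt)-1}{k}$ and $\int \frac{\varphi(t)}{\alpha(t)}\,dt = \frac{n}{4}\ln\frac{\sinh(2kt)-2kt+\cosh(2kt)-1}{2k}$, which simplifies to $\frac{n}{4}\ln(e^{2kt}-2kt-1) + \text{const}$. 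Exponentiating and rearranging produces exactly the claimed constants $A_1(t_1,t_2)$ and $A_2(t_1,t_2)$.

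The main obstacle is not computational but geometric: one must verify that the Hamilton--Jacobi integration can be carried out along a geodesic lying entirely in $M$, which forces the convexity hypothesis on $\partial M$ and the Neumann condition to propagate the estimate up to the boundary; once this is in place, every remaining step is a verbatim repetition of the noncompact argument, and the explicit evaluation of the two elementary integrals in $t$ closes the proof.
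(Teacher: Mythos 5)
Your proposal is correct and follows essentially the same route the paper intends: the paper's own ``proof'' of Theorem~\ref{Li-Yau Harnack thm1 compact} simply defers to Theorem~\ref{Li-Yau Harnack thm1 local}, and you reproduce exactly that argument, substituting the compact-manifold gradient estimate (Theorem~\ref{closed manifold main}) for the noncompact one and noting the role of boundary convexity in making the minimizing geodesic and the maximum-principle step go through. Your evaluation of the two integrals $\int\alpha\,dt = t + t\coth(kt) + \text{const}$ and $\int\varphi/\alpha\,dt = \frac{n}{4}\ln(e^{2kt}-2kt-1) + \text{const}$ matches the paper's expressions (the paper writes the latter as $\frac{n}{4}\ln\frac{\sinh(2kt)-2kt+\cosh(2kt)-1}{2k}$, which is the same up to an additive constant since $\sinh(2kt)+\cosh(2kt)=e^{2kt}$), so the constants $A_1$, $A_2$ come out as claimed.
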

\begin{proof}
The proof is similar to the proof of Theorem \ref{Li-Yau Harnack thm1 local}. We skip the details.\\
\end{proof}

Applying the linearized gradient estimate Theorem \ref{linear thm},
we obtain another Harnack inequality on compact manifolds as follows. The proof is
similar which we will also skip.

\begin{theo}\label{Li-Yau Harnack thm1 compact linear}
If $M$ is a compact Riemannian manifold possibly with boundary and with $Ricci(M)\ge -k$. If $\partial M\neq \emptyset$, we assume $\partial M$ is convex. $u(x,t)$ is a positive solution of the heat equation on $M$, and $\frac{\partial u}{\partial \nu}=0$ if $\partial M\neq \emptyset$. Then
\begeq\label{Li-Yau Harnack thm1 equ1}
u(x_1,t_1)\le u(x_2,t_2)\Big(\frac{t_2}{t_1}  \Big)^\frac{n}{2}\cdot
\exp\Big(  \frac{dist^2(x_2,x_1)}{4(t_2-t_1)}\big(1+\frac{1}{3}k(t_2+t_1)\big)+\frac{n}{4}k(t_2-t_1) \Big)
\endeq
where $x_1,x_2\in M$, $0<t_1<t_2<\infty$, $dist(x_1,x_2)$ is the distance between $x_1$ and $x_2$.
\end{theo}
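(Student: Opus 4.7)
The plan is to mimic the proof of Theorem~\ref{Li-Yau Harnack thm1 linear} (the noncompact linear case) essentially verbatim, simply replacing its appeal to the global linearized estimate (\ref{linear thm equ3}) with the corresponding compact-manifold version (\ref{linear thm closed}) from Theorem~\ref{closed manifold main}. In the present setting---closed $M$, or compact $M$ with convex boundary and Neumann data---Theorem~\ref{closed manifold main} applies, and with $f=\ln u$, $\alpha(t)=1+\tfrac{2}{3}kt$, and $\varphi(t)=\tfrac{n}{2t}+\tfrac{n}{2}k+\tfrac{n}{6}k^{2}t$, it yields the Hamilton--Jacobi-type inequality
\[
f_t \ge \frac{|\nabla f|^2-\varphi(t)}{\alpha(t)} \qquad \text{on } M\times(0,\infty).
\]

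Next I fix $x_1,x_2\in M$ and $0<t_1<t_2$, take a minimizing geodesic $\gamma:[0,1]\to M$ from $x_2$ to $x_1$ with constant speed $\rho:=d(x_1,x_2)$, and form the space-time path $\eta(s)=(\gamma(s),(1-s)t_2+st_1)$. Integrating $\frac{d}{ds}f(\eta(s))=\langle\dot\gamma,\nabla f\rangle-(t_2-t_1)f_t$ over $s\in[0,1]$, inserting the inequality above, and completing the square in $|\nabla f|$ via $\rho|\nabla f|-\tfrac{t_2-t_1}{\alpha}|\nabla f|^2\le\tfrac{\alpha\rho^2}{4(t_2-t_1)}$, I obtain after the change of variable $t=(1-s)t_2+st_1$
\[
f(x_1,t_1)-f(x_2,t_2) \le \frac{\rho^2}{4(t_2-t_1)^2}\int_{t_1}^{t_2}\alpha(t)\,dt + \int_{t_1}^{t_2}\frac{\varphi(t)}{\alpha(t)}\,dt.
\]
The first integral is elementary: $\int_{t_1}^{t_2}\alpha(t)\,dt=(t_2-t_1)\bigl(1+\tfrac{k}{3}(t_1+t_2)\bigr)$, producing the exponent $\tfrac{\rho^2}{4(t_2-t_1)}\bigl(1+\tfrac{k}{3}(t_1+t_2)\bigr)$ in the Harnack estimate.

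For the second integral, a one-line partial-fraction computation gives
\[
\frac{\varphi(t)}{\alpha(t)} \;=\; \frac{n}{2t}+\frac{nk}{4}-\frac{nk}{12\,\alpha(t)},
\]
so the antiderivative is $\tfrac{n}{2}\ln t+\tfrac{n}{4}kt-\tfrac{n}{8}\ln\alpha(t)$. Evaluating between $t_1$ and $t_2$ and discarding the non-positive boundary contribution $-\tfrac{n}{8}\ln(\alpha(t_2)/\alpha(t_1))$ (which is $\le0$ since $\alpha$ is increasing for $k\ge0$) produces the slightly weakened but cleaner bound $\tfrac{n}{2}\ln(t_2/t_1)+\tfrac{n}{4}k(t_2-t_1)$. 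Exponentiating the resulting inequality for $f(x_1,t_1)-f(x_2,t_2)$ yields precisely (\ref{Li-Yau Harnack thm1 equ1}).

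The argument is entirely routine, so there is no serious obstacle: the geometric ingredients (existence of a minimizing geodesic, and the global validity of the gradient estimate up to $\partial M$ under Neumann data with convex boundary) are already packaged into Theorem~\ref{closed manifold main}; the only mild bookkeeping is the partial-fraction identity above, which could also be bypassed by simply observing that $\varphi(t)/\alpha(t)\le n/(2t)+nk/4$ for $k,t\ge0$ and integrating this coarser pointwise bound directly.
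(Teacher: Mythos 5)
Your proposal is correct and follows exactly the route the paper intends (it states "the proof is similar which we will also skip"): replace the noncompact global estimate \eqref{linear thm equ3} with its compact-with-convex-boundary counterpart \eqref{linear thm closed} from Theorem \ref{closed manifold main}, then reuse the space-time path integration and completion-of-square argument from the proofs of Theorems \ref{Li-Yau Harnack thm1 local} and \ref{Li-Yau Harnack thm1 linear}. Your partial-fraction identity $\varphi/\alpha = \tfrac{n}{2t}+\tfrac{nk}{4}-\tfrac{nk}{12\alpha}$ checks out, and you correctly observe that discarding the resulting term $-\tfrac{n}{8}\ln\bigl(\alpha(t_2)/\alpha(t_1)\bigr)\le 0$ is what accounts for the absence of the factor $\bigl(\tfrac{1+\frac{2}{3}kt_2}{1+\frac{2}{3}kt_1}\bigr)^{-n/8}$ in the compact statement \eqref{Li-Yau Harnack thm1 equ1} relative to the noncompact statement \eqref{Li-Yau Harnack thm1 local linear equ1}.
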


It is well-known that Harnack inequality leads to lower bounds on
the heat kernel. Applying the Harnack estimates, we get\\

\noindent{\em {\bf Theorem \ref{lowerbound heat kernel thm1 intro}.}
Let $M$ be a complete (or compact with convex boundary) Riemannian
manifold possibly with $Ricci(M)\ge -k$. Let $H(x,y,t)$ be the
(Neumann) heat kernel. Then \earr\label{lowerbound heat kernel thm1
equ1 intro} H(x,y,t)\ge& (4\pi
t)^{-\frac{n}{2}}2^{-\frac{n}{4}}\frac{(2kt)^\frac{n}{2}}{(e^{2kt}-2kt-1)^\frac{n}{4}}\cdot
 \exp\Bigg[-
\frac{d^2(x_2,x_1)}{4t}\Big(1+\frac{kt\coth (kt)-1}{kt}\Big)
\Bigg]\\
{ and}&\\
 H(x,y,t)\ge& (4\pi
t)^{-\frac{n}{2}}\exp\Big[-\frac{d(x,y)^2}{4t}(1+\frac{1}{3}kt)-\frac{n}{4}kt
\Big], \eearr for all $x,y\in M$ and $t>0$. }

\begin{proof}
Since the proofs are similar, we will only prove the first
inequality for complete noncompact manifolds. Applying the Harnack inequality (\ref{Li-Yau Harnack
thm1 local equ1}) to the function
\[
u(y,t)=H(x,y,t),
\]
we obtain
\[
\begarr H(x,x,s)\le& H(x,y,t+s)\Big(\frac{ e^{2k(t+s)}-2k(t+s)-1}{
e^{2ks}-2ks-1}\Big)^\frac{n}{4}\\
&\cdot
 \exp\Bigg[
\frac{d^2(x_2,x_1)}{4t}\Big(1+\frac{(t+s)\coth
(k(t+s))-s\coth(ks)}{t}\Big) \Bigg],
\endarr
\]
for all $s>0$ and $t>0$. By local calculations one gets
\[
\di\lim_{s\rightarrow0}(4\pi s)^\frac{n}{2}H(x,x,s)=1,
\]
so, multiplying by $\lim_{s\rightarrow0}(4\pi s)^\frac{n}{2}$, we
get
\[
\begarr (4\pi s)^\frac{n}{2}H(x,x,s)\le&\di (4\pi t)^\frac{n}{2}
H(x,y,t+s)\frac{s^\frac{n}{2}}{(e^{2ks}-2ks-1)^\frac{n}{4}}\frac{(e^{2k(t+s)}-2k(t+s)-1)^\frac{n}{4}}{t^\frac{n}{2}}\\
&\cdot
 \exp\Bigg[
\frac{d^2(x_2,x_1)}{4t}\Big(1+\frac{(t+s)\coth
(k(t+s))-s\coth(ks)}{t}\Big) \Bigg]. \endarr
\]

Let $s\rightarrow 0$, we obtain
\[
1\le\di (4\pi t)^\frac{n}{2}
H(x,y,t)2^{\frac{n}{4}}\frac{(e^{2kt}-2kt-1)^\frac{n}{4}}{(2kt)^\frac{n}{2}}\cdot
 \exp\Bigg[
\frac{d^2(x_2,x_1)}{4t}\Big(1+\frac{kt\coth (kt)-1}{kt}\Big)
\Bigg].
\]
This completes the proof.
\end{proof}

As a direct corollary of Theorem \ref{lowerbound heat kernel thm1
intro}, we obtain the following estimate for $H(x,x,t)$.
\begin{coro}\label{lowerbound heat kernel thm1 cor1}
Under the same assumption as in Theorem \ref{lowerbound heat kernel
thm1 intro}, we have
\[
\begarr H(x,x,t)\ge& (4\pi
t)^{-\frac{n}{2}}2^{-\frac{n}{4}}\frac{(2kt)^\frac{n}{2}}{(e^{2kt}-2kt-1)^\frac{n}{4}},\\
{ and}&\\
H(x,x,t)\ge& (4\pi t)^{-\frac{n}{2}}\exp\Big[-\frac{n}{4}kt\Big].

\endarr
\]
\end{coro}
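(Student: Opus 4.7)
The plan is straightforward: specialize Theorem \ref{lowerbound heat kernel thm1 intro} to the diagonal by setting $y = x$, so that $d(x,y) = 0$ collapses every Gaussian exponential factor to $1$. No new estimates are needed; the corollary is just a diagonal readout of the off-diagonal bounds already in hand.

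Concretely, I would apply the first inequality of Theorem \ref{lowerbound heat kernel thm1 intro} with $y = x$. The exponential factor $\exp\bigl[-\tfrac{d^2(x,y)}{4t}\bigl(1+\tfrac{kt\coth(kt)-1}{kt}\bigr)\bigr]$ becomes $\exp(0) = 1$, so the inequality collapses to exactly the first claimed bound $H(x,x,t) \ge (4\pi t)^{-n/2} \cdot 2^{-n/4} (2kt)^{n/2}/(e^{2kt}-2kt-1)^{n/4}$. For the second bound, substitution $y = x$ in $H(x,y,t) \ge (4\pi t)^{-n/2}\exp\bigl[-\tfrac{d(x,y)^2}{4t}(1+\tfrac{1}{3}kt)-\tfrac{n}{4}kt\bigr]$ kills only the distance-dependent summand inside the exponential, leaving $\exp[-\tfrac{n}{4}kt]$, which is exactly the second claimed bound.

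There is no real obstacle here; the work is entirely contained in Theorem \ref{lowerbound heat kernel thm1 intro} and, upstream of it, in the Harnack estimates obtained by integrating the gradient inequalities along space-time paths. As a sanity check, I would verify the $k \to 0$ limit: both right-hand sides tend to $(4\pi t)^{-n/2}$, recovering the classical on-diagonal heat kernel lower bound that one expects under $\mathrm{Ric}(M)\ge 0$, which is consistent with Li--Yau's original result.
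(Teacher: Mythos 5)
Your proposal is correct and coincides with the paper's own (implicit) argument: the corollary is stated as a direct consequence of Theorem \ref{lowerbound heat kernel thm1 intro}, and specializing to $y=x$ so that $d(x,y)=0$ kills the distance-dependent exponential factors, yielding exactly the two claimed on-diagonal bounds. Your $k\to 0$ sanity check is a nice touch but not needed.
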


\begin{rema}
Results in this section hold for compact Riemannian manifold
with or without boundary condition. If the boundary is nonempty, we
need to assume the boundary is convex.
\end{rema}
\section{\bf A Perelman type differential Harnack inequality}\label{sec LYHP2}
We will devote this section to the proof of Theorem \ref{thm1.2}. Theorem \ref{thm1.2} may be the closet one to the differential Harnack inequality discovered by Perelman along Ricci flow. It's worthwhile to note that in this section we will follow a different notation which was used before by Yau in \cite{Yau2} and Perelman in \cite{P1}. Namely, we always assume $u=e^{-f}$ for positive heat solutions and $u=\frac{e^{-f}}{(4\pi t)^\frac{n}{2}}$ for positive heat kernels.  We start with a lemma. Let
\[
\begarr
\mathcal X(f,t)=&t\Delta f+f+\varphi(t),\quad {\rm with}\quad \varphi(t)=-\frac{n}{2}\ln (4\pi t)-n(1+\frac{1}{2}kt)^2\\

\mathcal Y(f,t)=&f_t=\Delta f-|\nabla f|^2,\\

 \mathcal Z(f,t)=&\mathcal X+t(1+kt)\mathcal Y.
\endarr
\]
Easy to see, $\int_M\mathcal Yudv=\int_Mf_tud\mu=\int_M(\Delta f-|\nabla f|^2)udv=0$. Hence, a multiple of $\int_M\mathcal Yudv$ with a time function will not affect the entropy functional.

\begin{lemm}\lab{lemm2.1}
Let $M$ be a complete Riemannian manifold. Let $u$ be a positive solution to the heat equation with $u=e^{-f}$. Then
\begin{enumerate}
  \item $(\frac{\partial}{\partial t}-\Delta)\mathcal X=-2t|\nabla_i\nabla_j f-(\frac{1}{2t}+\frac{k}{2})g_{ij}|^2-2\nabla \mathcal X\nabla f-(2kt+1)(\Delta f-|\nabla f|^2)-2t(R_{ij}+kg_{ij})f_if_j$\lab{lemm2.1-1}
  \item $(\frac{\partial}{\partial t}-\Delta)\mathcal Y=-2\nabla \mathcal Y\nabla f$\lab{lemm2.1-2}
  \item $(\frac{\partial}{\partial t}-\Delta)\mathcal Z=-2t|\nabla_i\nabla_j f-(\frac{1}{2t}+\frac{k}{2})g_{ij}|^2-2\nabla \mathcal Z\nabla f-2t(R_{ij}+kg_{ij})f_if_j$\lab{lemm2.1-3}
\end{enumerate}
\end{lemm}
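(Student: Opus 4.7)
The argument rests on three elementary facts: from $u_t=\Delta u$ with $u=e^{-f}$ one gets $f_t=\Delta f-|\nabla f|^2$; since the metric does not depend on $t$, one has $\partial_t\Delta f=\Delta f_t$; and the Bochner identity reads $\Delta|\nabla f|^2=2|\nabla_i\nabla_jf|^2+2\nabla f\cdot\nabla\Delta f+2R_{ij}f_if_j$. With these in hand, I would prove (2) first, then (1), then deduce (3) as a direct combination.

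Part (2) is almost immediate. Writing $\mathcal Y=f_t=\Delta f-|\nabla f|^2$, one has $\Delta\mathcal Y=\Delta f_t$ and $\partial_t\mathcal Y=\Delta f_t-2\nabla f\cdot\nabla f_t$; subtracting gives $(\partial_t-\Delta)\mathcal Y=-2\nabla f\cdot\nabla\mathcal Y$.

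For part (1), I would split $\mathcal X=t\Delta f+f+\varphi(t)$ and apply $(\partial_t-\Delta)$ termwise: $(\partial_t-\Delta)(t\Delta f)=\Delta f-t\Delta|\nabla f|^2$ (using $\Delta f_t=\Delta^2f-\Delta|\nabla f|^2$), $(\partial_t-\Delta)f=-|\nabla f|^2$, and $(\partial_t-\Delta)\varphi(t)=\varphi'(t)$. Expanding $-t\Delta|\nabla f|^2$ by Bochner yields
\[
(\partial_t-\Delta)\mathcal X=-2t|\nabla_i\nabla_jf|^2-2t\nabla f\cdot\nabla\Delta f-2tR_{ij}f_if_j+\Delta f-|\nabla f|^2+\varphi'(t).
\]
Next I would complete the square
\[
|\nabla_i\nabla_jf|^2=\bigl|\nabla_i\nabla_jf-\bigl(\tfrac{1}{2t}+\tfrac{k}{2}\bigr)g_{ij}\bigr|^2+\bigl(\tfrac{1}{t}+k\bigr)\Delta f-n\bigl(\tfrac{1}{2t}+\tfrac{k}{2}\bigr)^2,
\]
rewrite $-2t\nabla f\cdot\nabla\Delta f=-2\nabla\mathcal X\cdot\nabla f+2|\nabla f|^2$ (using $\nabla\varphi(t)=0$), and insert $\pm 2tk|\nabla f|^2$ to convert the curvature term to $R_{ij}+kg_{ij}$. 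Collecting coefficients, the $\Delta f$- and $|\nabla f|^2$-terms combine exactly into $-(1+2kt)(\Delta f-|\nabla f|^2)$, leaving only a time-dependent residual $\tfrac{n(1+kt)^2}{2t}+\varphi'(t)$. The crux of the argument, and the reason for the precise form of $\varphi$, is that $\varphi(t)=-\tfrac{n}{2}\ln(4\pi t)-n(1+\tfrac{kt}{2})^2$ satisfies $\varphi'(t)=-\tfrac{n(1+kt)^2}{2t}$, so this residual vanishes.

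Finally, part (3) follows by adding. Since $t(1+kt)$ is spatially constant and $\partial_t[t(1+kt)]=1+2kt$, part (2) gives
\[
(\partial_t-\Delta)\bigl[t(1+kt)\mathcal Y\bigr]=(1+2kt)\mathcal Y-2t(1+kt)\nabla f\cdot\nabla\mathcal Y.
\]
The $(1+2kt)\mathcal Y$ produced here cancels against the $-(1+2kt)\mathcal Y$ in part (1), while the two drift contributions merge into $-2\nabla\bigl[\mathcal X+t(1+kt)\mathcal Y\bigr]\cdot\nabla f=-2\nabla\mathcal Z\cdot\nabla f$, yielding the stated identity for $\mathcal Z$. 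I expect no serious obstacle beyond bookkeeping; the only genuinely structural step is the verification of $\varphi'(t)$ in part (1), which is precisely what selects the normalization of $\varphi$.
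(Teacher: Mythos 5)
Your proposal is correct and follows essentially the same route as the paper: verify $\mathcal Y$ via the Bochner identity, compute $(\partial_t-\Delta)\mathcal X$ termwise, use Bochner and completion of the square against $(\tfrac{1}{2t}+\tfrac{k}{2})g_{ij}$, check that $\varphi'(t)=-\tfrac{n(1+kt)^2}{2t}$ kills the residual, and then obtain $\mathcal Z$ by linear combination with cancellation of $(1+2kt)\mathcal Y$. The only deviation is cosmetic: the paper's first displayed line has a stray factor of $2$ in front of $t\Delta|\nabla f|^2$ (clearly a typo, since its subsequent Bochner expansion matches the $-t\Delta|\nabla f|^2$ you have), and your termwise decomposition is slightly cleaner.
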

\begin{proof}
By direct computations with the help of Bochner formula, (c.f. Lemma $2.2$ in \cite{NL1}), one can prove equality (\ref{lemm2.1-1}),
\[
\begarr
&(\frac{\partial}{\partial t}-\Delta)\mathcal X\\
\\
=& -2t\Delta|\nabla f|^2+(\Delta f-|\nabla f|^2)+\varphi'\\
\\
=& -2t|f_{ij}|^2-2t\nabla\Delta f\nabla f-2tR_{ij}f_if_j+(\Delta f-|\nabla f|^2)+\varphi'\\
\\
=& -2t|f_{ij}|^2-2\nabla\mathcal X\nabla f+2|\nabla f|^2+2kt|\nabla f|^2+(\Delta f-|\nabla f|^2)-2t(R_{ij}+kg_{ij})f_if_j+\varphi'\\
\\
=& -2t|f_{ij}-(\frac{1}{2t}+\frac{k}{2})g_{ij}|^2-2\nabla\mathcal X\nabla f-(2kt+1)(\Delta f-|\nabla f|^2)-2t(R_{ij}+kg_{ij})f_if_j.
\endarr
\]
Equality (\ref{lemm2.1-2}) is again by Bochner formula and it has already been proved in section \ref{sec gradient} Lemma \ref{sLY lem1}. Inequality (\ref{lemm2.1-3}) is immediate from equality (\ref{lemm2.1-1}) and equality (\ref{lemm2.1-2}),
\[
\begarr
(\frac{\partial}{\partial t}-\Delta)\mathcal Z=&(\frac{\partial}{\partial t}-\Delta)\mathcal X+t(1+kt)(\frac{\partial}{\partial t}-\Delta)\mathcal Y+(1+2kt)\mathcal Y\\
\\
=&-2t|\nabla_i\nabla_j f-(\frac{1}{2t}+\frac{k}{2})g_{ij}|^2-2\nabla \mathcal Z\nabla f-2t(R_{ij}+kg_{ij})f_if_j
\endarr
\]
\end{proof}

Now, we recall Theorem \ref{thm1.2}.\\

\noindent{\em {\bf Theorem \ref{thm1.2}}
Suppose $M^n$ is a closed manifold. Let $u$ be the positive heat kernel and $k\ge0$ is any constant satisfying $R_{ij}(x)\ge -kg_{ij}$ for all $x\in M^n$, then
\begeq\label{}
\begin{array}{rll}
v:=\big[t\Delta f+t(1+kt)(\Delta f -|\nabla f|^2)+f-n(1+\frac{1}{2}kt)^2\big]u\le 0,
\end{array}
\endeq
for all $t>0$ with $u=\frac{e^{-f}}{(4\pi t)^\frac{n}{2}}$.\\
}

We first derive the evolution equation equation of $v$.
\begin{prop}\label{prop2.1}
Let $u$, $f$, and $v$ be defined as in Theorem \ref{thm1.2}. Then
\begeq
(\frac{\partial}{\partial t}-\Delta)v=-2t\big|\nabla_i\nabla_j f-(\di\frac{1}{2t}+\frac{k}{2})g_{ij}\big|^2u-2t(R_{ij}+kg_{ij})f_if_ju.
\endeq
\end{prop}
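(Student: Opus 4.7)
My plan is to reduce the claim to Lemma \ref{lemm2.1}(\ref{lemm2.1-3}) via a change of variables that absorbs the heat kernel normalization. Concretely, I would introduce $\tilde f := f + \tfrac{n}{2}\ln(4\pi t)$, so that $u = e^{-\tilde f}$ and $\tilde f$ satisfies the heat-solution identity $\tilde f_t = \Delta \tilde f - |\nabla \tilde f|^2$ used as a hypothesis in Lemma \ref{lemm2.1}. Because $\ln(4\pi t)$ has no spatial dependence, $\nabla \tilde f = \nabla f$ and $\nabla_i\nabla_j \tilde f = \nabla_i\nabla_j f$; the substitution only affects the purely time-dependent piece.

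Next I would verify that the bracketed expression
\[
F := t\Delta f + t(1+kt)(\Delta f - |\nabla f|^2) + f - n(1+\tfrac{1}{2}kt)^2
\]
is exactly $\mathcal Z(\tilde f, t)$. Writing out $\mathcal X(\tilde f,t) = t\Delta \tilde f + \tilde f + \varphi(t)$ with $\varphi(t) = -\tfrac{n}{2}\ln(4\pi t) - n(1+\tfrac{1}{2}kt)^2$, the $+\tfrac{n}{2}\ln(4\pi t)$ coming from $\tilde f$ cancels the $-\tfrac{n}{2}\ln(4\pi t)$ in $\varphi$, so $\mathcal X(\tilde f,t) = t\Delta f + f - n(1+\tfrac{1}{2}kt)^2$. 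Adding $t(1+kt)\mathcal Y(\tilde f,t) = t(1+kt)(\Delta f - |\nabla f|^2)$ recovers $F$. This identity is the reason $\varphi(t)$ was chosen with the particular $-\tfrac{n}{2}\ln(4\pi t)$ term.

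With $F = \mathcal Z(\tilde f,t)$ established, Lemma \ref{lemm2.1}(\ref{lemm2.1-3}) applied to $\tilde f$ immediately yields
\[
(\partial_t - \Delta) F = -2t\bigl|\nabla_i\nabla_j f - (\tfrac{1}{2t}+\tfrac{k}{2})g_{ij}\bigr|^2 - 2\nabla F \cdot \nabla f - 2t(R_{ij}+kg_{ij})f_i f_j,
\]
since the spatial quantities for $\tilde f$ and $f$ agree. Finally I would compute $(\partial_t - \Delta)(Fu)$ with the product rule. Because $u$ solves $u_t = \Delta u$, one has
\[
(\partial_t - \Delta)(Fu) = \bigl[(\partial_t - \Delta)F\bigr]u - 2\nabla F \cdot \nabla u,
\]
and since $u = e^{-\tilde f}$ gives $\nabla u = -u\,\nabla f$, the transport term becomes $-2\nabla F \cdot \nabla u = +2u\,\nabla F \cdot \nabla f$, which cancels the $-2\nabla F \cdot \nabla f$ factor above. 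What remains is precisely the claimed evolution equation for $v$.

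The only subtle point is the bookkeeping that arranges $\varphi(t)$ to absorb the heat-kernel normalization so Lemma \ref{lemm2.1} applies verbatim; the rest is mechanical. An alternative route that avoids the $\tilde f$ substitution would be to redo the Bochner-formula computation of Lemma \ref{lemm2.1} directly under the heat-kernel normalization (picking up the extra $-\tfrac{n}{2t}$ in $f_t = \Delta f - |\nabla f|^2 - \tfrac{n}{2t}$), but this duplicates the work and is more error-prone.
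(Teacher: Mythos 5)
Your proof is correct and follows essentially the same approach as the paper: both introduce $\tilde f = f + \tfrac{n}{2}\ln(4\pi t)$ to fit the hypothesis of Lemma \ref{lemm2.1}, identify $v = \mathcal Z(\tilde f,t)\,u$, apply part (\ref{lemm2.1-3}), and use the product rule together with $u_t = \Delta u$ and $\nabla u = -u\nabla f$ to cancel the transport term $-2\nabla\mathcal Z\cdot\nabla f$.
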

\begin{proof}
Recall in Theorem \ref{thm1.2}, $u=\frac{e^{-f}}{(4\pi t)^\frac{n}{2}}$. One can use the change of variable idea, or simply observe that $f+\frac{n}{2}\ln(4\pi t)$ satisfies the assumption of Lemma \ref{lemm2.1}. Easy to see $v(f,t)=Z(f+\frac{n}{2}\ln(4\pi t),t)u$. Hence, by Lemma \ref{lemm2.1}
\earr
(\frac{\partial}{\partial t}-\Delta)v=&(\frac{\partial}{\partial t}-\Delta)(\mathcal Zu)\\
\\
=&(\frac{\partial}{\partial t}-\Delta)\mathcal Zu-2\nabla \mathcal Z\nabla u+\mathcal Z(\frac{\partial}{\partial t}-\Delta)u\\
\\
=&(\frac{\partial}{\partial t}-\Delta)\mathcal Zu+2u\nabla \mathcal Z\nabla f\\
\\
=&-2t\big|\nabla_i\nabla_j f-(\frac{1}{2t}+\frac{k}{2})g_{ij}\big|^2u-2t(R_{ij}+kg_{ij})f_if_ju,
\eearr
where we have used the fact that $(f+\frac{n}{2}\ln(4\pi t))_i=f_i$.
\end{proof}

The way we find the point-wise differential Harnack quantity can be used to find similar quantities for other geometric evolution equations, e.g., along Ricci flow equation. See an application in \cite{LX2}. \\

One can rewrite $v$ as $v=\big[t(2\Delta f -|\nabla f|^2)+f-n\big]u +kt^2\Delta u-nkt(1-\frac{1}{4}t)u$. Clearly, the first term is the one discussed by Ni for the $Ric\ge0$ case and the second term is divergence free. The last term contributes to the Ricci curvature term from Bochner formula. On the other hand, the difference between $v$ and the integrand of $\mathcal W_P$, cf. (\ref{new entropy}), is a divergence term $(t+kt^2)(\Delta f -|\nabla f|^2)u=(t+kt^2)\Delta u$. This term is crucial for finding the pointwise differential inequality. The evolution of $v$ also yields a proof to the monotonicity of $\mathcal W_P$ in section \ref{sec entropy}.\\

When $k=0$, our theorem reduces to the result of L. Ni in \cite{NL1} and \cite{NL4}. The reason we discuss the case of $k\ge0$ is because we will make essential use of the heat kernel comparison with hyperbolic space and Euclidean space, see Theorem A.3. in the Appendix. Going through more complicated computations, one could deal with $Ricci(M)>0$ case as well. Using ideas from this section, we also established a direct proof for Perelman's differential Harnack inequality along Ricci flow in \cite{LX2}.\\

\begin{rema}\label{trick}
Our proof of this differential inequality is different from the one of L. Ni for $Ric\ge0$ case in \cite{NL1} and \cite{NL4}. The main simplification is we do not need to use the gradient estimates for all positive solutions to the heat equation, and also other techniques such as the {\em nontrivial} reduced distance function introduced by Perelman in the study of Ricci flow.
\end{rema}



\begin{proof}{\bf (Proof of Theorem \ref{thm1.2}:)}  For any $t_0 > 0$, let $h$ be any positive function. We solve the backward heat equation $(\partial_t-\Delta)h(y,t_0-t)=0$ starting from $t_0$ with initial data $h$. We then have that,

\begin{eqnarray}\label{monotone}
\partial_t\int_M vh d\mu &=&\int_M (h_tv+hv_t) d\mu \nonumber\\
&=&\int_M (-\Delta hv+h\Delta v)d\mu-2t\int_M\big|\nabla_i\nabla_j f-(\frac{1}{2t}+\frac{k}{2})g_{ij}\big|^2hud\mu\\
&& -2t\Dint_M(R_{ij}+kg_{ij})f_if_juhd\mu \nonumber\\
&=& -2t\int_M\big|\nabla_i\nabla_j f-(\frac{1}{2t}+\frac{k}{2})g_{ij}\big|^2hud\mu -2t\Dint_M(R_{ij}+kg_{ij})f_if_juhd\mu\nonumber\\
&\le&0,\nonumber
\end{eqnarray}
where we have used Proposition \ref{prop2.1} and $h_t+\Delta h=0$.

$${ {\emph{\bf Claim}}:} \quad\quad \lim_{t\rightarrow 0}\int_M vh d\mu\leq 0 \hspace{3cm}$$

Combining (\ref{monotone}) and the {\bf\emph{claim}}, we have that
$$\int_M h v d\mu\leq 0$$
for any $t_0>0$ and any positive functions $h$. This implies that $v\leq 0$.

\end{proof}

The key point is to prove the {\emph{claim}}. We need an upper bound on derivatives of the logarithm of the heat kernel and  the small time asymptotic estimates on logarithmic derivatives of the heat kernel. These known results are summarized in the Appendix.

\begin{proof}{\bf (Proof of the {\em Claim}:)} Define
$${\mathcal W_{h}}(t,x)= \int_M h(t_0-t,y) v(t,x,y) d\mu.$$
The proof is along the line of \cite{NL4}. We first show that for any fixed $x\in M$, ${\mathcal W_{h}}(t)$ has a finite upper bound as $t\rightarrow 0$. For the fundamental solution $F(t,x,y)=e^{-f}$ , $v=\big[t\Delta f+t(1+kt)(\Delta f -|\nabla f|^2)+f-\frac{n}{2}\ln (4\pi t)-n(1+\frac{1}{2}kt)^2\big]F$. We can write
\begin{eqnarray}
{\mathcal W_{h}}(t)&=& t(2+kt)\int_M \Delta f Fhd\mu- t(1+kt)\int_M|\nabla f|^2Fh d\mu+\int_M \big(f-\frac{n}{2}\ln 4\pi t\big) F h d\mu \nonumber\\ &&-n(1+\frac{1}{2}kt)^2\int_M F h d\mu \nonumber\\
&=& I+ II +III +IV\nonumber
\end{eqnarray}
By Theorem {\bf A2} (3) in the Appendix, we have
\begin{eqnarray}
\Bigg|\frac{\nabla F(t,x,y)}{F(t,x,y)}\Bigg|\leq D_1\Big(\frac{dist(x,y)}{t}+\frac{1}{\sqrt{t}}\Big) \nonumber\\
\Bigg|\frac{\Delta F(t,x,y)}{F(t,x,y)}\Bigg|\leq D_2\Big(\frac{dist(x,y)}{t}+\frac{1}{\sqrt{t}}\Big)^2 \nonumber
\end{eqnarray}
Since $\Delta f=-\frac{\Delta F}{F}+\frac{|\nabla F|^2}{F^2}$, we have
\earr
|I|=&\int_M \Big|t(2+kt)\Big(-\frac{\Delta F}{F}+\frac{|\nabla F|^2}{F^2}\Big)\Big|F hd\mu \leq D|t(2+kt)|\int_M \Big(\frac{dist^2(x,y)}{t^2}+\frac{1}{t}\Big) F h d\mu\\
\\
|II|=&\Big|t(1+kt)\Big|\int_M \frac{|\nabla F|^2}{F^2}F hd\mu\leq D|t(1+kt)|\int_M \Big(\frac{dist^2(x,y)}{t^2}+\frac{1}{t}\Big) F h d\mu\nonumber\\
\eearr

From the asymptotic expansion of the heat hernel of $F(t,x,y)$, and elementary computations, we have
\begin{eqnarray}
&&\lim_{t\rightarrow 0}\int_M F(t,x,y) h(y) d\mu=h(0,x) \nonumber\\
&&\lim_{t\rightarrow 0}\int_M \frac{dist^2(x,y)}{4t}F(t,x,y) h(y) d\mu=\frac{n}{2}h(0,x) \label{distancesquare}\\
&&0\leq\lim_{t\rightarrow 0}\int_M  dist(x,y)F(t,x,y) h(y) d\mu \nonumber\\
&&\leq\lim_{t\rightarrow 0}\Big(\int_M \frac{dist^2(x,y)}{4t}F(t,x,y) h(y) d\mu\Big)^{1/2}\Big(t\int_MFh d\mu\Big)^{1/2} =0 \nonumber
\end{eqnarray}
where the second limit is by elementary computations on (\ref{Asymptotic}), (c.f. pg 8 in \cite{NL4}). Hence, we have $\limsup_{t\rightarrow 0}|I|+|II|\leq \tilde Dh(0,x)$.

When $k>0$, from Cheeger and Yau$'$s heat kernel comparison theorem for $Ric\ge-k$ and Davies and Mandouvalos$'$s lower bound estimate on heat kernel of space form, we have
\begin{eqnarray}
F(t,x,y)&\geq& C(n) (4\pi t)^{-n/2}\exp\Big(-\frac{r^2}{4t}-\frac{(n-1)kt}{4}-\frac{r\sqrt{(n-1)k}}{2}\Big)\nonumber\\
&&\times\Bigg(1+r\sqrt{\frac{k}{n-1}}+\frac{k}{n-1}t\Bigg)^{\frac{n-1}{2}-1}\Big(1+r\sqrt{\frac{k}{n-1} }\Big) \nonumber
\end{eqnarray}
where $r=dist(x,y)$. From $F=e^{-f}$, we have
\begin{eqnarray}
f-\frac{n}{2}\ln4\pi t&\leq& \frac{r^2}{4t}+\frac{(n-1)kt}{4}+\frac{r\sqrt{(n-1)k}}{2}-\ln C(n)\nonumber\\
&-&(\frac{n-1}{2}-1)\ln\Big(1+r\sqrt{\frac{k}{n-1}}+\frac{k}{n-1}t\Big)-\ln\Big(1+r\sqrt{\frac{k}{n-1} }\Big) \nonumber
\end{eqnarray}
Hence, from (\ref{distancesquare}), we have
\begin{eqnarray}
\limsup_{t\rightarrow 0}III =\limsup_{t\rightarrow 0}\int_M \big(f-\frac{n}{2}\ln(4\pi t)\big)Fh d\mu \leq \Big(\frac{n}{2}-\ln C(n)\Big)h(0,x) \nonumber
\end{eqnarray}
Similar arguments works for the case of $k=0$, where one need to deal with heat kernel of Euclidean space which has simpler form. On the other hand, $\limsup_{t\rightarrow 0}IV=-n h(0,x)$. Hence, we can conclude that $\limsup_{t\rightarrow 0}{\mathcal W_{h}}(t)$ is finite.\\

By the entropy monotonicity formula (\ref{monotone}), we know that the limit $\lim_{t\rightarrow 0}{\mathcal W_{h}}(t)=\gamma$ exists for some finite $\gamma$. Hence $\lim_{t\rightarrow 0}\Big[{\mathcal W_{h}}(t)-{\mathcal W_{h}}(\frac{t}{2})\Big]=0$. By (\ref{monotone}) and the mean-value theorem, we can find $t_i \rightarrow 0$ such that
\begin{eqnarray}
\lim_{t_i\rightarrow 0}t^2_i\int_M \Big|\nabla\nabla f-(\frac{1}{2t}+\frac{k}{2})g\Big|^2Fh d\mu =0 \nonumber
\end{eqnarray}
By the Cauchy-Schwartz inequality and the H\"older inequality, we have that
\[\begarr
&\lim_{t_i\rightarrow 0}t_i\Dint_M \Big(\Delta f-\frac{n}{2t_i}-\frac{1}{2}nk\Big)Fh d\mu \\
\\
=&t_i\lim_{t_i\rightarrow 0}\Dint_M \Big(\Delta f-\frac{n}{2t_i}\Big)Fh d\mu-\lim_{t_i\rightarrow 0}\frac{1}{2}nkt_i\int_M Fh d\mu \nonumber\\
\\
=&t_i\lim_{t_i\rightarrow 0}\Dint_M \Big(\Delta f-\frac{n}{2t_i}\Big)Fh d\mu =0.
\endarr\]
This yields
\[\begarr
\gamma=&\di\lim_{t_i\rightarrow 0}{\mathcal W_{h}}(t_i)\\
=&\di t_i(1-kt_i)\int_M  (\Delta f-\big|\nabla f|^2)Fh d\mu+\int_M\Big(f-\frac{n}{2}\ln(4\pi t_i)+\frac{n}{2}-n(1+\frac{1}{2}kt_i)^2\Big)Fhd\mu
\endarr\]

Using integration by parts, we have
\begin{eqnarray}
\int_M  (\Delta f-\Big|\nabla f|^2)Fh d\mu &=&-\int_M  \Delta F h d\mu  \nonumber\\
&=&-\int_M F\Delta h d\mu=-\Delta h(0,x) \nonumber
\end{eqnarray}
Hence we have
\begin{eqnarray}
\lim_{t_i\rightarrow 0}t_i(1+kt_i)\int_M \Big(\Delta f-\Big|\nabla f\Big|^2\Big)Fh d\mu =-\lim_{t_i\rightarrow 0}t_i(1+kt_i)\Delta h(0,x) =0 \nonumber
\end{eqnarray}
From (\ref{Asymptotic}) in the Appendix, we have
\begin{eqnarray}
\lim_{t_i\rightarrow 0}\Big(f-\frac{n}{2}\ln (4\pi t_i)-\frac{dist^2(x,y)}{4t_i}\Big)
&=&-\lim_{t_i\rightarrow 0}\ln\Big((4\pi t_i)^{n/2}e^{\frac{dist^2(x,y)}{4t_i}}F(t_i,x,y)\Big)\nonumber\\
&=&-\ln H_0(x,y)\nonumber
\end{eqnarray}
holds uniformly  on any compact subsets of $M \setminus Cut(x)$, and for $y = \exp_x(Y)$, $H_0(x, y)$ is given by the reciprocal of the square root of the Jacobian of $\exp_x$ at $Y$, and $H_0(x,x)=1$.
Hence we have
\begin{eqnarray}
\gamma&=&\lim_{t_i\rightarrow 0}\int_M\Big(f-\frac{n}{2}\ln(4\pi t_i)+\frac{n}{2}-n(1+\frac{1}{2}kt_i)^2\Big)Fhd\mu \nonumber\\
&=&\lim_{t_i\rightarrow 0}\int_{M} \Big(f-\frac{n}{2}\ln (4\pi t_i)-\frac{dist^2(x,y)}{4t_i}\Big)Fh d\mu \nonumber\\
&&+\lim_{t_i\rightarrow 0}\int_{M} \Big(\frac{dist^2(x,y)}{4t_i}-\frac{n}{2}\Big)Fh d\mu +n(kt_i+\frac{1}{4}k^2t^2_i)\lim_{t_i\rightarrow 0}\int_{M}Fh d\mu \nonumber
\nonumber\\
&\le& -\big(\ln H_0(x,x)\big)h(0,x)=0 \nonumber
\end{eqnarray}
The last inequality comes from uniformly convergence theorem with $Cut(x)$ zero measure for first term, and (\ref{distancesquare}).

Hence we prove $\gamma\le 0$ holds, which is our Claim.

\end{proof}

\section{\bf Another Harnack Inequality for Heat Kernels}\label{sec LYHP Har}
Combining differential Harnack inequality (\ref{harnack inequ}) with the heat equation $u_t=\Delta u$, $u=\frac{e^{-f}}{(4\pi t)^\frac{n}{2}}$, and $|\nabla f|^2-\Delta f +f_t +\frac{n}{2t}=0$ we have a Hamilton-Jacobi inequality
\begeq
|\nabla f|^2 + (2+kt)f_t+\frac{f}{t}\le k\frac{n}{4}(2+kt).
\endeq
The case of $k=0$ is due to L. Ni \cite{NL3}. Naturally, the differential Harnack inequality (\ref{harnack inequ}) will leads to the following Harnack type estimates. 

\begin{theo}\label{LYH harnack}
Let $M^n$ be a Riemannian manifold with $Ric\ge -kg$ ($k\ge0$). If we denote $\tilde t:=\tilde t(t)=\frac{t}{2+kt}$, then for any $x_1$, $x_2\in M^n$, and $0<t_1<t_2$,
the following Harnack type estimates hold,
\begeq
\sqrt {\tilde t_2} f(x_2,t_2)-\sqrt {\tilde t_1} f(x_1,t_1)\le \frac{dist^2(x_1,x_2)}{4(s(t_2)-s(t_1))}+\frac{n}{4}(\Phi(t_2)-\Phi(t_1)),
\endeq
where $\tilde{t}_1=\tilde t(t_1)$, $\tilde{t}_2=\tilde t(t_2)$, \\
$$\Phi(t)=k\displaystyle\int^t_0\sqrt{\tilde t}dt=k^{-\frac{1}{2}}[\sqrt{kt(kt+2)}-\ln(1+kt+\sqrt{kt(kt+2)})],$$
 and $s=s(t)$ is defined as following
\begin{enumerate}
  \item $s(t):=\displaystyle\int^t_0\frac{1}{\sqrt{\tilde t}}dt=k^{-\frac{1}{2}}[\sqrt{kt(kt+2)}+\ln(1+kt+\sqrt{kt(kt+2)})]$, for $k>0$;
  \item $s(t):=\sqrt t$, for $k=0$.
\end{enumerate}

\end{theo}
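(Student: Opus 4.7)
The plan is to derive the Harnack estimate by integrating the Hamilton--Jacobi inequality
$$|\nabla f|^2 + (2+kt)f_t + \frac{f}{t} \le \frac{nk(2+kt)}{4}$$
(which is stated immediately above the theorem and follows from the differential Harnack in Theorem~\ref{thm1.2} together with the heat equation) along a smooth spacetime curve $(\gamma(t),t)$ with $\gamma(t_1)=x_1,\ \gamma(t_2)=x_2$, in the manner of Li--Yau. The crucial observation is that the $f/t$ term forces an integrating factor $h(t)$ with $h'(t)/h(t)=1/[t(2+kt)]$, and elementary integration shows $h(t)=\sqrt{\tilde t(t)}$; this is precisely the weight $\sqrt{\tilde t}$ appearing on the left-hand side of the statement.

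With this choice, I would compute $\frac{d}{dt}\bigl[\sqrt{\tilde t}\,f(\gamma(t),t)\bigr]$ via the chain rule and substitute the H--J inequality to bound $f_t$. By the design of the integrating factor the $f$-terms cancel, leaving
$$\frac{d}{dt}\bigl[\sqrt{\tilde t}\,f\bigr]\;\le\;\sqrt{\tilde t}\,\langle\dot\gamma,\nabla f\rangle-\frac{\sqrt{\tilde t}}{2+kt}|\nabla f|^2+\frac{nk\sqrt{\tilde t}}{4}.$$
Young's inequality applied to the first two terms (with the cross-product coefficient tuned so that the $|\nabla f|^2$ pieces cancel exactly) bounds their sum by a purely kinetic expression of the form $A(t)|\dot\gamma|^2$ with $A(t)=(2+kt)\sqrt{\tilde t}/4$. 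The remaining curvature term $\tfrac{nk}{4}\sqrt{\tilde t}$ integrates to $\tfrac{n}{4}\bigl(\Phi(t_2)-\Phi(t_1)\bigr)$ directly from the definition of $\Phi$.

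It then remains to minimize the residual kinetic integral $\int_{t_1}^{t_2}A(t)|\dot\gamma|^2\,dt$ over curves $\gamma$ from $x_1$ to $x_2$. Cauchy--Schwarz yields
$$\int_{t_1}^{t_2}A(t)|\dot\gamma|^2\,dt\;\ge\;\frac{\mathrm{dist}^2(x_1,x_2)}{\int_{t_1}^{t_2}A(t)^{-1}\,dt},$$
with equality for a curve whose speed is proportional to $1/A$; equivalently, after reparameterizing by $\sigma=s(t)$ (with $s'(t)=1/\sqrt{\tilde t}$), the problem reduces to the standard length-versus-energy comparison on $M$, and the denominator becomes $4\bigl(s(t_2)-s(t_1)\bigr)$. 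I expect the main obstacle to be the algebraic bookkeeping that lines up $A(t)^{-1}$ with $s'(t)$ through the factor $(2+kt)$ so that the Cauchy--Schwarz denominator reproduces the precise $s$ defined in the statement, together with the standard Li--Yau smoothing argument needed to justify differentiation of $f\circ\gamma$ when $\gamma$ crosses the cut locus of $x_1$.
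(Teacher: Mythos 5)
Your plan reproduces the paper's own argument: integrate the Hamilton--Jacobi inequality along a space-time curve using the integrating factor $\sqrt{\tilde t}$ (so the $f/t$ term cancels), complete the square in $|\nabla f|$ to obtain a kinetic term $A(t)|\dot\gamma|^2$ with $A(t)=\tfrac14\sqrt{t(2+kt)}$, integrate the curvature term to get $\tfrac{n}{4}\bigl(\Phi(t_2)-\Phi(t_1)\bigr)$, and minimize the kinetic integral over curves via Cauchy--Schwarz. Up to and including the identification of $A(t)$ this is exactly what the paper does.

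The concern you raise in your final sentence, however, is a genuine gap --- both in your write-up and in the paper's own proof --- and it does not resolve. For the reparametrization $\sigma=s(t)$ to turn $\int A(t)|\dot\gamma|^2\,dt$ into $\tfrac14\int|\gamma'(\sigma)|^2\,d\sigma$, one needs $A(t)\,s'(t)=\tfrac14$, i.e.\ $s'(t)=1/\sqrt{t(2+kt)}$. But the $s$ in the statement satisfies $s'(t)=1/\sqrt{\tilde t}=\sqrt{(2+kt)/t}$, and
\[
A(t)\cdot\frac{1}{\sqrt{\tilde t}}
=\frac{\sqrt{t(2+kt)}}{4}\cdot\sqrt{\frac{2+kt}{t}}
=\frac{2+kt}{4}\neq\frac14 ,
\]
so the factor $(2+kt)$ you flagged does not cancel. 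Equivalently, the Cauchy--Schwarz denominator is $\int_{t_1}^{t_2}A(t)^{-1}\,dt=4\int_{t_1}^{t_2}\frac{dt}{\sqrt{t(2+kt)}}$, whose primitive (for $k>0$) is $k^{-1/2}\ln\bigl(1+kt+\sqrt{kt(kt+2)}\bigr)$ --- only the logarithmic half of the $s$ defined in the theorem. Since the stated $s$ carries the additional positive increasing term $k^{-1/2}\sqrt{kt(kt+2)}$, it makes $s(t_2)-s(t_1)$ larger and hence the claimed right-hand side strictly smaller than what the Young/Cauchy--Schwarz chain actually yields; so the argument does not establish the theorem with the $s$ as written. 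To close the gap, take $s'(t)=1/\sqrt{t(2+kt)}=\sqrt{\tilde t}/t$ rather than $1/\sqrt{\tilde t}$, and replace $s$ in the conclusion by its primitive $k^{-1/2}\ln\bigl(1+kt+\sqrt{kt(kt+2)}\bigr)$; with that correction the bookkeeping is exact and the rest of your plan goes through.
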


In this section, we always assume $(M^n,g)$ a complete (possibly noncompact) manifold with Ricci curvature bounded from below, i.e. $Ric(M)\geq -k$ for some constant $k\ge0$.  We shall apply an equivalent form of our differential Harnack inequality (\ref{harnack inequ}):
\begin{equation}
t(2+kt)\Delta f -t(1+kt)|\nabla f|^2+f-n(1+\frac{1}{2}kt)^2\le 0 \nonumber
\end{equation}
for the heat kernel $F(t,x,y)=\frac{e^{-f}}{(4\pi t)^\frac{n}{2}}$ on $M$ to obtain a pointwise Harnack inequality. The estimate (\ref{harnack inequ}) is a Li-Yau-Hamilton type since combining with the heat equation
$$f_t=\Delta f-|\nabla f|^2 -\frac{n}{2t},$$
we have a Hamilton-Jacobi inequality
\begin{eqnarray}
|\nabla f|^2 + (2+kt)f_t+\frac{f}{t}\le k\frac{n}{4}(2+kt). \label{HJI}
\end{eqnarray}

\begin{proof}({\bf Proof of Theorem \ref{LYH harnack}}:)
We only prove the case of $k>0$ here. Case $k=0$ is due to Ni \cite{NL3}. Let $\tilde t(t)$ and $s(t)$ be defined as in the theorem.
\begin{eqnarray}
\sqrt{\tilde t_2}f(t_2,x_2,y)-\sqrt{\tilde t_1}f(t_1,x_1,y)&=&\int_{t_1}^{t_2}\frac{d}{dt}\Big(\sqrt{\tilde t}f(t,\gamma(t),y)\Big)dt\nonumber\\
&=&\int_{t_1}^{t_2} \sqrt{\tilde t}\Big(f_t+\frac{(\sqrt{\tilde t})'}{\sqrt{\tilde t}}f+\<\nabla f,\gamma'(t)\>\Big)dt\nonumber\\
&\leq& \int_{t_1}^{t_2} \sqrt{\tilde t}\Big(-\frac{1}{2+kt}|\nabla f|^2+|\nabla f|\cdot|\gamma'|+\frac{nk}{4}\Big)dt\nonumber\\
&\leq&\int_{t_1}^{t_2} \Big(\frac{1}{4}\sqrt{t(2+kt)}|\gamma'(t)|^2+\frac{1}{4}nk\sqrt{\tilde t}\Big)dt\nonumber
\end{eqnarray}
for any path $\gamma(t)$ joining from $x_1$ to $x_2$. One can check that $s'(t)=\frac{1}{\sqrt{\tilde t}}$. This gives a Harnack type estimate:
\begin{eqnarray}
\sqrt{\tilde t_2}f(t_2,x_2,y)-\sqrt{\tilde t_1}f(t_1,x_1,y)&\leq& \inf_{\gamma}\frac{1}{4}\int_{s(t_1)}^{s(t_2)} |\gamma'(s)|^2ds +\frac{n}{4}(\Phi(t_2)-\Phi(t_1)) \nonumber
\end{eqnarray}
Choose $\gamma(s)$ to be a shortest geodesic with constant speed completes the proof.
\end{proof}

\begin{rema}\label{rem4.1}
The $k=0$ case was obtained by L. Ni in \cite{NL3}. The heat kernel comparison theorem of Cheeger and Yau \cite{CY} will imply the differential Harnack inequality as we did in proof of Theorem \ref{thm1.2}. Reversely, when the manifold is Ricci nonnegative, Ni recovered Cheeger-Yau's heat kernel comparison theorem by the above Harnack inequality. We thank Lei Ni for helping us understand his paper and pointing out a mistake in the early manuscript.
\end{rema}

\vspace{3mm}

Along the line of Ni, we consider the case of $k>0$. We take $x_1=o$ in the above theorem, where $o$ is the singular point of the fundamental solution at $t=0$. Argue as in \cite{NL3}, one gets $\displaystyle\lim_{t\rightarrow0}\sqrt{\tilde t}f(t,o,o)-\frac{n}{4}\Phi(t)\le0$, since $\displaystyle\lim_{t\rightarrow0}u(o,t)=\lim_{t\rightarrow0}\frac{\frac{\sqrt{\tilde t}f(o,t)}{\sqrt{\tilde t}}}{(4\pi t)^\frac{n}{2}}=\delta_0(o)$. This yields the following heat kernel comparison theorem.

\begin{coro}\label{Coro4.1}
Let $M^n$, $F$, $f$, $\tilde t$, $s$, and $\Phi$ be the same as in Theorem \ref{LYH harnack}. Then for any $(x,y)\in M\times M$, we have
\begin{enumerate}
  \item When $Ricci(M)\ge0$, $f(x,t)\le\frac{d^2(x,o)}{4t}$.
  \item When $Ricci(M)\ge-k$, $f(x,t)\le\frac{d^2(x,o)}{4\sqrt{\tilde t}s(t)}+\frac{n}{4}\frac{\Phi(t)}{\sqrt{\tilde t}}$.
\end{enumerate}
\end{coro}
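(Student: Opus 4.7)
The plan is to apply Theorem~\ref{LYH harnack} with the pair $(x_1,t_1)=(o,t_1)$ and $(x_2,t_2)=(x,t)$, where $o$ is the basepoint of the fundamental solution (i.e.\ the singular point of $F$ at $t=0$), and then let $t_1\downarrow 0$. In this specialization the Harnack inequality reads
\[
\sqrt{\tilde t}\,f(x,t)-\sqrt{\tilde t_1}\,f(t_1,o,o)\le\frac{d^2(x,o)}{4\bigl(s(t)-s(t_1)\bigr)}+\frac{n}{4}\bigl(\Phi(t)-\Phi(t_1)\bigr).
\]
Since $s(t_1)\to 0$ and $\Phi(t_1)\to 0$ as $t_1\downarrow 0$, item~(2) would follow at once provided one can control the boundary contribution $\sqrt{\tilde t_1}\,f(t_1,o,o)$.

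For that boundary term I would invoke the short-time heat kernel asymptotics already used in Section~\ref{sec LYHP2} and collected in the Appendix: on the diagonal one has $(4\pi t_1)^{n/2}F(t_1,o,o)\to H_0(o,o)=1$ as $t_1\downarrow 0$. Since $f(t_1,o,o)=-\ln\bigl((4\pi t_1)^{n/2}F(t_1,o,o)\bigr)$ by definition, this yields $f(t_1,o,o)\to 0$ and hence $\sqrt{\tilde t_1}\,f(t_1,o,o)\to 0$. Passing to the limit $t_1\downarrow 0$ in the displayed inequality gives
\[
\sqrt{\tilde t}\,f(x,t)\le\frac{d^2(x,o)}{4\,s(t)}+\frac{n}{4}\Phi(t),
\]
which is exactly item~(2) after dividing through by $\sqrt{\tilde t}$.

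Item~(1) is then the specialization $k=0$: in this case $\Phi\equiv 0$ and, under the $k=0$ normalization $s(t)=\sqrt t$ inherited from Ni's paper \cite{NL3}, the right-hand side collapses to $d^2(x,o)/(4t)$, recovering Ni's classical sharp comparison. The only mildly delicate step in the whole argument is the diagonal limit $f(t_1,o,o)\to 0$, which rests on the Minakshisundaram--Pleijel leading-order expansion of the heat kernel; this is already quoted with the needed uniform remainder bounds in the Appendix, so the obstacle is more a matter of bookkeeping than of real analysis. Once that limit is in hand, both items of the corollary drop out by substitution into the Harnack estimate of Theorem~\ref{LYH harnack}.
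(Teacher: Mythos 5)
Your proof is correct and follows essentially the same route the paper sketches in the paragraph preceding the corollary: set $x_1=o$ in Theorem~\ref{LYH harnack}, let $t_1\downarrow 0$, and use the on-diagonal short-time heat kernel asymptotics to show the boundary contribution vanishes. Your treatment is somewhat more explicit than the paper's terse sentence (the paper justifies $\lim_{t\to 0}\bigl(\sqrt{\tilde t}\,f(t,o,o)-\tfrac{n}{4}\Phi(t)\bigr)\le 0$ by a rather informal appeal to $u(\cdot,t)\to\delta_o$, whereas you correctly invoke $(4\pi t_1)^{n/2}F(t_1,o,o)\to H_0(o,o)=1$ from Theorem~A.1), but the argument is the same in substance.
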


\section{\bf Entropy formulas with monotonicity}\label{sec entropy}
In this section, we will introduce various new entropy functionals and discuss their monotonicity along the heat equation on any compact Riemannian manifold. Point-wise differential Harnack inequalities and monotonicity formulas for entropy functionals are closely related. Usually, a point-wise differential Harnack quantity easily yields a monotonicity formula for the related functional. But reversely, it is more difficult. In general, the proofs of monotonicity formulas for functionals are also easier. The reason is upon integration over closed manifolds, all the information of a divergence form will be disappeared. This implies the point-wise differential Harnack quantity should be {\em the} representative in the space of the entropy integrands for the same entropy functional.\\

In this section, based on the Li-Yau type and Li-Yau-Hamilton-Perelman type of differential Harnack inequalities
we introduced in section \ref{sec gradient} and \ref{sec LYHP2}, we can easily establish monotonicity formulas for the  related entropy functionals. But in our actual searching for differential Harnack, we discovered the functionals first, then localized them and obtained the pointwise version.\\


As in section \ref{sec LYHP2}, we will follow the notations of Yau \cite{Yau2} and Perelman \cite{P1} to assume $u=e^{-f}$ for positive heat solutions and $u=\frac{e^{-f}}{(4\pi\tau)^\frac{n}{2}}$ for positive heat kernels. \\

We introduce the following Li-Yau entropy functional $\mathcal W_{LY}$, where the integrand is $t^2Fu$ or $\sinh^2(kt)Fu$ from Proposition \ref{entropy prop1},
\begin{equation}\label{Li-Yau entropy}
\begarr
{\mathcal W_{LY}}(u,t):=&\Dint_{M^n}t^2Fud\mu=-\Dint_{M^n}t^2\Big[\Delta \ln u + \frac{n}{2t}+\frac{nk}{2}(1+\frac{1}{3}kt) \Big]ud\mu,\\
{\rm or}&\\
{\mathcal W_{LY}}(u,t):=&\Dint_{M^n}\sinh^2(kt)Fud\mu=-\Dint_{M^n}\sinh^2(t)\Big[\Delta \ln u + \frac{nk}{2}\big[\coth (kt)+1\big] \Big]ud\mu,
\endarr
\end{equation}
where we have used integration by parts to get $\int_Mf_tud\mu=\int_M(|\nabla \ln u|^2-\Delta\ln u)ud\mu=0$.\\

As a direct consequence of Proposition \ref{entropy prop1} and the differential Harnack inequality in Theorem \ref{main thm}, we have the following theorem.

\begin{theo}\label{Li-Yau entropy thm1}
Let $M^n$ be a closed manifold. Assume that $u$ is a positive solution to the heat equation (\ref{heat equ}) with $\int_Mud\mu=1$ and let $f=-\ln u$. Consider the functional $$\mathcal W_{LY}=-\Dint_{M^n}t^2\Big[\Delta \ln u + \frac{n}{2t}+\frac{nk}{2}(1+\frac{1}{3}kt) \Big]ud\mu.$$ If $Ricci(M)\ge -k$, then $\mathcal W_{LY}\le 0$ for all $t\ge 0$  and \begin{equation}
\frac{d}{dt}{\mathcal W_{LY}(f,t)}=-2t^2\Dint_{M^n}|\textstyle f_{ij}-\frac{k}{2}g_{ij}-\frac{1}{2t} g_{ij}|^2ud\mu-2t^2\Dint_{M^n}(R_{ij}+kg_{ij})f_if_jud\mu\leq 0.
\end{equation}
The monotonicity is strict for all $t\ge 0$, unless the manifold is Einstein, i.e. $Ricci(M)=-k$, and $f$ satisfies the gradient Ricci soliton equation $\textstyle\frac{1}{2}R_{ij}+f_{ij}-\frac{1}{2t} g_{ij}\equiv0$.
\end{theo}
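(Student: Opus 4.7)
My plan is to reduce the entropy monotonicity to the pointwise identity already provided in Proposition \ref{entropy prop1}(1), combined with integration over the closed manifold $M$. With $f=-\ln u$, set $\alpha(t)=1+\tfrac{2}{3}kt$, $\varphi(t)=\tfrac{n}{2t}+\tfrac{n}{2}k+\tfrac{n}{6}k^2 t$, and $F=|\nabla f|^2+\alpha f_t-\varphi$. The first step is to verify that the functional in the theorem equals $\int_M t^2 F u\, d\mu$. Using the heat equation, $\Delta\ln u=-\Delta f=-f_t-|\nabla f|^2$, so the stated integrand becomes $t^2[f_t+|\nabla f|^2-\varphi]u$, which differs from $t^2 F u$ by $-t^2(\alpha-1)f_t u=-\tfrac{2}{3}kt^3 f_t u$. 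Since $f_t u=-u_t$ and $\int_M u\,d\mu=1$ is preserved, integration kills this difference, so $\mathcal W_{LY}(u,t)=\int_M t^2 F u\, d\mu$.

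The next step is differentiation. Writing $\tfrac{d}{dt}\int_M t^2 Fu\,d\mu=\int_M \partial_t(t^2 Fu)\,d\mu=\int_M \Delta(t^2 Fu)\,d\mu-\int_M (\Delta-\partial_t)(t^2 Fu)\,d\mu$, the first integral vanishes by the divergence theorem on the closed manifold $M$. Substituting the identity
\[
(\Delta-\partial_t)(t^2 Fu)=2t^2\bigl|f_{ij}-\tfrac{1}{2t}g_{ij}-\tfrac{k}{2}g_{ij}\bigr|^2 u+2t^2(R_{ij}+kg_{ij})f_if_j u
\]
from Proposition \ref{entropy prop1}(1) immediately yields the claimed derivative formula. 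Under $Ricci(M)\ge -k$, both integrands on the right are pointwise nonnegative (the second because $R_{ij}+kg_{ij}\ge 0$ as a quadratic form and $u>0$), giving $\tfrac{d}{dt}\mathcal W_{LY}\le 0$.

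For $\mathcal W_{LY}\le 0$ itself, I would invoke the pointwise Li-Yau estimate of Theorem \ref{linear thm} (or its closed-manifold counterpart, Theorem \ref{closed manifold main}) applied to $\tilde f=\ln u$: $|\nabla\tilde f|^2-\alpha\tilde f_t\le\varphi$. Translating into $f=-\tilde f$, this is exactly $F\le 0$ pointwise; multiplying by $t^2 u\ge 0$ and integrating gives $\mathcal W_{LY}\le 0$ for all $t\ge 0$.

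Finally, the rigidity case follows from inspecting when $\tfrac{d}{dt}\mathcal W_{LY}=0$. Since $u>0$ on a closed manifold, vanishing of the derivative forces both $f_{ij}=\tfrac{1+kt}{2t}g_{ij}$ and $(R_{ij}+kg_{ij})f_if_j\equiv 0$ on $M$. The former combined with the latter (on the support of $\nabla f$) gives $R_{ij}=-kg_{ij}$, i.e.\ $M$ is Einstein with Ricci constant $-k$; then $\tfrac{1}{2}R_{ij}+f_{ij}-\tfrac{1}{2t}g_{ij}=-\tfrac{k}{2}g_{ij}+\tfrac{1+kt}{2t}g_{ij}-\tfrac{1}{2t}g_{ij}=0$, which is precisely the stated gradient Ricci soliton equation. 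The main subtlety is handling the ``Einstein'' conclusion rigorously when $\nabla f$ may vanish on a set of positive measure, but since $u>0$ and the Hessian condition $f_{ij}=\tfrac{1+kt}{2t}g_{ij}$ is already extraordinarily restrictive on $f$, this step is routine and can be stated at the level of generic solutions.
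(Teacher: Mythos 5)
Your proposal is correct and proceeds along essentially the same route as the paper's own (one-line) proof: identify $\mathcal W_{LY}$ with $\int_M t^2 F u\, d\mu$, deduce the monotonicity formula by integrating the pointwise identity of Proposition~\ref{entropy prop1}(1) over the closed manifold, and deduce $\mathcal W_{LY}\le 0$ from the pointwise estimate in Theorem~\ref{closed manifold main} after the sign change $f=-\ln u$. The paper merely cites these two ingredients; your write-up usefully fills in the normalization bookkeeping, namely that the discrepancy $-t^2(\alpha-1)f_t u$ between the displayed integrand and $t^2 F u$ integrates to zero because $f_t u=-u_t$ and $\int_M u\, d\mu$ is preserved.

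One caveat concerns the rigidity paragraph. The step asserting that $(R_{ij}+kg_{ij})f_i f_j\equiv 0$ ``on the support of $\nabla f$ gives $R_{ij}=-kg_{ij}$'' is not valid as written: vanishing of the nonnegative quadratic form $R+kg$ on the single vector $\nabla f$ only places $\nabla f$ in its kernel, it does not force $R+kg\equiv 0$. In fact on a closed manifold the Hessian condition $f_{ij}=\tfrac{1+kt}{2t}g_{ij}$ would give $\Delta f=n\tfrac{1+kt}{2t}>0$ everywhere, contradicting $\int_M\Delta f\,d\mu=0$, so the equality case is actually vacuous in the closed setting and is only realized in noncompact examples such as the heat kernel on $\mathbb R^n$, as the paper's own Remark~\ref{Li-Yau entropy remark1} notes. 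The paper does not prove the rigidity assertion either, so your treatment is at the same level of rigor as the original, but the specific logical move you wrote down should not be presented as a deduction.
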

\begin{rema}\label{Li-Yau entropy remark1}
On a complete noncompact manifold, the above Ricci soliton equation with potential function as the logarithmic of a positive heat solutionin can be realized in  some cases, e.g., heat kernels on $\mathbb R^n$ obtains the equality with $k=0$. 
\end{rema}

\begin{proof}
Recall that $f_t=\Delta f-|\nabla f|^2$. Using (\ref{linear thm equ3}), we have 
\[
\mathcal W_{LY}=\displaystyle\int_Mt^2Fud\mu\le0.
\] 

The monotonicity follows from $W_{LY}=\Dint_Mt^2Fud\mu$ and Proposition \ref{entropy prop1}.
\end{proof}

Exactly the similar theorem is also true for 
\[
{\mathcal W_{LY}}(u,t)=-\Dint_{M^n}\sinh^2(kt)\Big[\Delta \ln u + \frac{nk}{2}\big[\coth (kt)+1\big] \Big]ud\mu.
\]
We leave the details to the readers.\\

On the other hand, one can prove these theorems directly. Since the entropy integrand becomes simpler than the differential Harnack quantity and integration by parts works for closed manifolds, one can get an easier and more direct derivation for the monotonicity formula. We will use this idea to derive the Perelman type of entropy monotonicity in the proof of Theorem \ref{thm1.1}.\\

In regard of the Perelman type LYH Harnack quantity, the following entropy formula is very natural. We define
\begin{equation}\label{new entropy}
{\mathcal W_{P}}(f,\tau)=\Dint_{M^n}\Big(\tau|\nabla f|^2+f-n(1+\textstyle\frac{1}{2}k\tau)^2\Big)\frac{e^{-f}}{(4\pi\tau)^{\frac{n}{2}}}d\mu,
\end{equation}
with $\Dint_M\frac{e^{-f}}{(4\pi\tau)^{\frac{n}{2}}}d\mu=1$. When $k=0$, ${\mathcal W_P}$ is exactly Ni's functional $\mathcal W$ in \cite{NL1}. The following theorem generalizes L. Ni's result in the sense that for closed manifolds there is no curvature condition needed.

\begin{theo}\label{thm1.1}
Let $M^n$ be a closed manifold. Assume that $u$ is a positive solution to the heat equation (\ref{heat equ}) with $\int_Mud\mu=1$. If we choose $k\in {\mathbb R}$ to be any constant satisfying $Ricci(M)\ge -k$ and let $f$ be defined as $u=\frac{e^{-f}}{(4\pi\tau)^\frac{n}{2}}$ and $\tau=\tau(t)$ with $\frac{d\tau}{dt}=1$, then $W_P\le 0$ for all $t\ge 0$, and
\begin{equation}
\frac{d}{dt}{\mathcal W_P}=-2\tau\Dint_{M^n}|\textstyle\frac{k}{2}g_{ij}+f_{ij}-\frac{1}{2\tau} g_{ij}|^2\frac{e^{-f}}{(4\pi\tau)^\frac{n}{2}}d\mu-2\tau\Dint_{M^n}(R_{ij}+kg_{ij})f_if_j\frac{e^{-f}}{(4\pi\tau)^\frac{n}{2}}d\mu\leq 0.
\end{equation}
Moreover, the monotonicity is strict for all $t\ge 0$, unless the manifold is Einstein, $Ricci(M)=-k$, and $f$ satisfies the Ricci soliton equation $\textstyle\frac{1}{2}R_{ij}+f_{ij}-\frac{1}{2t} g_{ij}\equiv0$.
\end{theo}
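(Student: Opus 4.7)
The plan is to bypass any new Bochner-type computation by recognizing $\mathcal W_P(f,t)$ as the $M$-integral of the pointwise differential Harnack quantity $v$ already proved nonpositive in Theorem \ref{thm1.2}. Since $d\tau/dt=1$, we may take $\tau=t$; then
\begin{equation*}
v=\bigl[t\Delta f+t(1+kt)(\Delta f-|\nabla f|^2)+f-n(1+\tfrac{1}{2}kt)^2\bigr]u.
\end{equation*}
First I would verify $\mathcal W_P=\int_M v\,d\mu$ via two integrations by parts on the closed manifold: because $u=e^{-f}/(4\pi t)^{n/2}$ satisfies $\Delta u=(-\Delta f+|\nabla f|^2)u$, the middle term of $v$ equals $-t(1+kt)\Delta u$ and hence integrates to zero, while $\int_M\Delta f\cdot u\,d\mu=\int_M|\nabla f|^2 u\,d\mu$ converts the first term into the $\tau|\nabla f|^2$ summand of $\mathcal W_P$. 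Theorem \ref{thm1.2} then immediately yields $\mathcal W_P\le 0$ for every $t>0$.

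For the monotonicity, differentiate under the integral sign and apply Proposition \ref{prop2.1}:
\begin{equation*}
\frac{d}{dt}\mathcal W_P=\int_M(\partial_t v)\,d\mu=\int_M\Delta v\,d\mu-2t\int_M\bigl|f_{ij}-(\tfrac{1}{2t}+\tfrac{k}{2})g_{ij}\bigr|^2u\,d\mu-2t\int_M(R_{ij}+kg_{ij})f_if_j\,u\,d\mu.
\end{equation*}
The Laplacian term integrates to zero since $M$ is closed, and both remaining integrands are nonnegative -- the Hessian term obviously, the curvature term because $R_{ij}+kg_{ij}\ge 0$ by hypothesis. This simultaneously yields the stated identity and $\frac{d}{dt}\mathcal W_P\le 0$.

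The rigidity clause is extracted by demanding both pointwise integrands to vanish. The Hessian equation $f_{ij}=(\frac{1}{2t}+\frac{k}{2})g_{ij}$ can be rewritten as $\tfrac{1}{2}R_{ij}+f_{ij}-\tfrac{1}{2t}g_{ij}=\tfrac{1}{2}(R_{ij}+kg_{ij})$; combining this with the direction condition $(R_{ij}+kg_{ij})\nabla f\equiv 0$ and the tensor nonnegativity of $R_{ij}+kg_{ij}$ forces $R_{ij}=-kg_{ij}$ (Einstein) together with the gradient Ricci soliton equation $\tfrac{1}{2}R_{ij}+f_{ij}-\tfrac{1}{2t}g_{ij}=0$. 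The only genuinely technical step in the whole argument is the identification $\mathcal W_P=\int_M v\,d\mu$; once that is verified, the proof reduces to bookkeeping around Proposition \ref{prop2.1}, and no cut-off functions or maximum-principle machinery are required precisely because $M$ is closed. In this sense the pointwise Harnack quantity $v$ of Theorem \ref{thm1.2} serves as the canonical representative in the equivalence class of integrands producing the same entropy functional.
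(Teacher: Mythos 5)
Your route is correct and is genuinely different from the one the paper actually carries out. The paper \emph{mentions} exactly your idea in passing --- ``an immediate proof is by using Proposition~\ref{prop2.1}'' --- but then opts instead for a self-contained computation: it substitutes $\tilde f=-\ln u=f+\frac{n}{2}\ln(4\pi\tau)$, differentiates the Nash entropy $\int_M\tilde f u\,d\mu$ and the Fisher-type term $\int_M\tau|\nabla\tilde f|^2u\,d\mu$ separately, applies the Bochner formula to the latter, and recombines. You instead observe that $\mathcal W_P=\int_M v\,d\mu$ (where $v$ is the pointwise quantity of Theorem~\ref{thm1.2}) and simply integrate Proposition~\ref{prop2.1} over the closed manifold, letting $\int_M\Delta v\,d\mu=0$ do the work. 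The identification $\mathcal W_P=\int_M v\,d\mu$ is correctly justified: the middle term of $v$ is $-\tau(1+k\tau)\Delta u$ which integrates to zero, and $\int_M\Delta f\cdot u\,d\mu=\int_M|\nabla f|^2u\,d\mu$ converts the $t\Delta f$ term into $\tau|\nabla f|^2$. Your approach buys economy (no fresh Bochner computation) at the cost of depending on the machinery of \S\ref{sec LYHP2}; the paper's direct proof keeps \S\ref{sec entropy} self-contained. One point to tighten: Proposition~\ref{prop2.1} is stated for $v$ built from a heat \emph{kernel}, so you should note explicitly (as you implicitly do) that its proof uses only Lemma~\ref{lemm2.1}, which applies to any positive heat solution, so the evolution identity for $v$ is valid in the generality of Theorem~\ref{thm1.1}.

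There is one genuine gap, though it is partly inherited from the paper. You invoke Theorem~\ref{thm1.2} to obtain $\mathcal W_P\le 0$, but Theorem~\ref{thm1.2} is proved only for the \emph{heat kernel} (its proof hinges on the Claim about $t\to 0$ asymptotics of the fundamental solution) and assumes $k\ge 0$, whereas Theorem~\ref{thm1.1} is stated for arbitrary positive solutions with $\int_M u\,d\mu=1$ and $k\in\mathbb R$. For a general positive solution the nonpositivity can fail at small $t$: take $M$ a flat torus, $k=0$, and $u\equiv 1/\mathrm{vol}(M)$; then $f=\ln\mathrm{vol}(M)-\frac{n}{2}\ln(4\pi t)$ and $\mathcal W_P=f-n\to+\infty$ as $t\to 0$ (consistent with monotone decrease, $\frac{d}{dt}\mathcal W_P=-\frac{n}{2t}$, but not with nonpositivity). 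So the $\mathcal W_P\le 0$ clause requires the heat-kernel normalization, which your deduction from Theorem~\ref{thm1.2} respects --- just say so. The paper's own proof, note, silently omits the nonpositivity and proves only the derivative formula. Finally, your rigidity argument needs a touch more care: vanishing of $\int_M(R_{ij}+kg_{ij})f_if_j\,u\,d\mu$ with $R_{ij}+kg_{ij}\ge 0$ forces only $(R_{ij}+kg_{ij})\nabla^jf\equiv 0$, not $R_{ij}=-kg_{ij}$ outright; the Einstein conclusion requires a further argument (e.g.\ combining the Hessian equation with the contracted Bianchi identity). The paper states but does not prove the rigidity either, so again this is a shared loose end rather than a defect unique to your write-up.
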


There are various proofs for entropy monotonicity formula. An immediate proof is by using Proposition \ref{prop2.1}. Here, we will present a direct proof which is based on a change of variable argument, see similar argument in \cite{L1}.

\begin{proof}{\bf (Proof of Theorem \ref{thm1.1})}
 Let $\tilde f:=-\ln u = f+\frac{n}{2}\ln (4\pi \tau)$. Easy to see $\tilde f_t=\Delta \tilde f-|\nabla \tilde f|^2$. We first observe that the derivative of the Nash entropy $\mathcal N(f,\tau):=\Dint_M fe^{-f}d\mu $ is
\[
\begarr
\Dfrac{d}{dt}\mathcal N(\tilde f,\tau)= & \Dint_M\tilde f_tud\mu+\Dint_M\tilde fu_td\mu=\Dint_M(\Delta \tilde f-|\nabla \tilde f|^2)ud\mu+\Dint_M\tilde f\Delta ud\mu\\
\\
=& \Dint_M\Delta\tilde f ud\mu,
\endarr
\]
where the last step follows from integration by parts. As far as integration by parts is allowed, we can use the fact $\int_M|\nabla f|^2e^{-f}d\mu =\int_M\Delta fe^{-f}d\mu $.

Applying Bochner formula, one get,
\[
\begarr
\Dfrac{d}{dt}\Dint_M \tau|\nabla \tilde f|^2ud\mu=&\Dfrac{d}{dt}\Dint_M \tau\Delta \tilde fud\mu\\
\\
= &\Dint_M \Delta \tilde fud\mu +\tau\Dint_M (\Delta \tilde f_tu+\Delta \tilde fu_t)d\mu \\
\\
= &\Dint_M \Delta \tilde fud\mu +\tau\Dint_M (2\Delta \tilde f-|\nabla \tilde f|^2)\Delta ud\mu\\
\\
= &\Dint_M \Delta \tilde fud\mu -2\tau\Dint_M (|\tilde f_{ij}|^2+R_{ij}\tilde f_i\tilde f_j)ud\mu\\
\\
= &\Dint_M \Delta \tilde fud\mu -2\tau\Dint_M |\tilde f_{ij}|^2ud\mu+2k\tau\Dint_M|\nabla \tilde f|^2ud\mu\\&
-2\tau\Dint_M(R_{ij}+kg_{ij})f_if_jud\mu
\endarr
\]
Now we are ready to obtain the monotonicity by using integration by parts and completing the square,
\[
\begarr
&\Dfrac{d}{dt}\Dint_M \Big(\tau|\nabla \tilde f|^2+\tilde f-\frac{n}{2}\ln (4\pi \tau)-n(1+\textstyle\frac{1}{2}k\tau)^2\Big)ud\mu\\
\\
= &2\Dint_M \Delta \tilde fud\mu -2\tau\Dint_M |\tilde f_{ij}|^2ud\mu+2k\tau \Dint_M|\nabla \tilde f|^2ud\mu -\frac{n}{2\tau}+nk-\frac{nk^2}{2}\tau\\
&-2\tau\Dint_M(R_{ij}+kg_{ij})f_if_jud\mu\\
\\
=&-2\tau\Dint_M |\tilde f_{ij}-(\frac{1}{2\tau}+\frac{k}{2})g_{ij}|^2ud\mu-2\tau\Dint_M(R_{ij}+kg_{ij})f_if_jud\mu
\endarr
\]
Change $\tilde f$ back to $f$, we complete the proof.\\
\end{proof}

The third interesting entropy functional is the `Nash entropy', $-\int_MH\log Hd\mu$, where $H$ is the positive heat kernel. We will use the linearized version of our generalized Li-Yau estimate, namely, the estimate in Theorem \ref{linear thm}, to illustrate the idea. The nonlinear version works exactly in the same way. 

 Following the ideas in Section 5 of \cite{P1} and motivated by and along the line of Addenda to \cite{NL1}, we discuss the relations among these different entropies. Let $u(x,t)$ be a positive solution to the heat equation with $\int_Mud\mu = 1$. We define
\[
N(u,t)= \Dint_M -(\log u)ud\mu
\]
and

\[
\tilde N(u,t)=N(u,t) -\frac{n}{2}\log (4\pi t)-\frac{n}{2}kt(1+\frac{1}{6}kt)-\frac{n}{2}.
\]
Direct computations shows that
\earr
\frac{d\tilde N}{dt}=&-\Dint_M \Big(\Delta (\log u)+\frac{n}{2t}+\frac{nk}{2}(1+\frac{1}{3}kt)\Big)ud\mu\\
=&\Dint_M \Big(|\nabla \log u|^2-(1+\frac{2}{3}kt)\frac{u_t}{u}-\frac{n}{2t}-\frac{nk}{2}(1+\frac{1}{3}kt)\Big)ud\mu,
\eearr
where in the last step we have used integration by parts and the heat equation.\\

Notice that the integrand in the last step is just the generalized Li-Yau gradient estimate (\ref{linear thm equ3}), which is
\begeq\label{generalized Li-Yau gradient}
|\nabla \log u|^2-(1+\frac{2}{3}kt)\frac{u_t}{u}-\frac{n}{2t}-\frac{nk}{2}(1+\frac{1}{3}kt)\le 0,
\endeq
for any closed manifold when choosing proper $k$. Using (\ref{generalized Li-Yau gradient}), one arrives at the following estimate on the Nash entropy $-\int_MH\log Hd\mu$, which extends L. Ni's result to general manifolds.
\begin{prop}
Let $M^n$ be a complete Riemannian manifold with $Ricci(M)\ge -k$ and $H$ be the positive heat kernel. Then $\tilde N(H,t)$ satisfies the following properties:
\begin{enumerate}
  \item $\frac{d}{dt}\tilde N<0$, unless $M$ is an Einstein manifold and $H$ satisfies the gradient Ricci soliton equation $\frac{1}{2}R_{ij}-(\ln u)_{ij}-\frac{1}{2\tau} g_{ij}\equiv 0$.
  \item $\lim_{t\rightarrow 0}\tilde N(H,t)=0$.
\end{enumerate}
\end{prop}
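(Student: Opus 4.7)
The plan is to combine the preceding derivative computation with Theorem~\ref{linear thm} for part (1), and to use the short-time asymptotic expansion of the heat kernel for part (2).

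For part (1), observe that the derivation immediately preceding the proposition already displays
\[
\frac{d\tilde N}{dt}(H,t)=\int_M \Big(|\nabla \log H|^2-(1+\tfrac{2}{3}kt)\tfrac{H_t}{H}-\tfrac{n}{2t}-\tfrac{nk}{2}(1+\tfrac{1}{3}kt)\Big)H\,d\mu.
\]
The integrand is pointwise nonpositive by the global estimate (\ref{linear thm equ3}), available because $Ricci(M)\ge -k$ holds on all of $M$, so $\frac{d\tilde N}{dt}\le 0$. Since $H>0$, equality forces the integrand $F:=|\nabla \log H|^2-(1+\tfrac{2}{3}kt)(\log H)_t-\varphi(t)$ to vanish identically, whence also $\nabla F\equiv 0$ and $(\Delta -\partial_t)F\equiv 0$. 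Feeding this into identity (\ref{sLY lem2 identity 1}) of Lemma~\ref{sLY lem2} (with $f=\log H$) collapses the right-hand side to
\[
0 = 2\bigl|f_{ij}+\bigl(\tfrac{1}{2t}+\tfrac{k}{2}\bigr)g_{ij}\bigr|^2+2(R_{ij}+kg_{ij})f_if_j,
\]
both terms of which are nonnegative and must individually vanish. The vanishing of the first term yields $(\log H)_{ij}=-(\tfrac{1}{2t}+\tfrac{k}{2})g_{ij}$, which becomes the soliton equation $\tfrac12 R_{ij}-(\log H)_{ij}-\tfrac{1}{2t}g_{ij}\equiv 0$ once $R_{ij}\equiv -kg_{ij}$ is imposed; the vanishing of the second term, combined with generic nondegeneracy of $\nabla \log H$, forces Einstein with $R_{ij}\equiv -kg_{ij}$.

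For part (2), use the Minakshisundaram--Pleijel short-time expansion
\[
H(t,x,y) = (4\pi t)^{-n/2}e^{-d^2(x,y)/(4t)}\bigl(H_0(x,y)+O(t)\bigr), \qquad H_0(x,x)=1,
\]
valid uniformly on compact subsets of $M\setminus \mathrm{Cut}(x)$. Taking the logarithm gives $-\log H=\tfrac{n}{2}\log(4\pi t)+\tfrac{d^2(x,y)}{4t}-\log H_0(x,y)+O(t)$. Under $Ricci\ge -k$ the manifold is stochastically complete, so $\int_M H\,d\mu=1$, and
\[
N(H,t)=\tfrac{n}{2}\log(4\pi t)+\int_M \tfrac{d^2(x,y)}{4t}H\,d\mu -\int_M H\log H_0\,d\mu+o(1).
\]
As $t\to 0^+$, $H(t,x,\cdot)$ concentrates at $x$; the second-moment integral tends to $\tfrac{n}{2}$ by the computation (\ref{distancesquare}) already established in the proof of Theorem~\ref{thm1.2}, while the $H_0$-integral tends to $\log H_0(x,x)=0$. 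Therefore $N(H,t)=\tfrac{n}{2}\log(4\pi t)+\tfrac{n}{2}+o(1)$, and subtracting off $\tfrac{n}{2}\log(4\pi t)+\tfrac{n}{2}kt(1+\tfrac16 kt)+\tfrac{n}{2}$ (the $kt$-terms tending to $0$) yields $\lim_{t\to 0}\tilde N(H,t)=0$.

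The main obstacle is part (2): the pointwise asymptotic expansion holds only in a neighborhood of the diagonal off the cut locus, while the entropy integrals run over all of $M$. The far-field contributions of $\int \tfrac{d^2}{t}H\,d\mu$ and $\int H\log H_0\,d\mu$ must be controlled, which is accomplished by the Gaussian upper bounds for $H$ available under $Ricci\ge -k$ (Li--Yau, Cheeger--Yau, Davies--Mandouvalos) combined with Varadhan-type decay; this is precisely the strategy already deployed for the {\em Claim} inside the proof of Theorem~\ref{thm1.2}, so the required machinery is in hand.
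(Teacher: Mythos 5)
Your proposal follows the paper's route for part (1) and fills in part (2), which the paper's proof in fact omits entirely.

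For part (1), you and the paper both reduce monotonicity to the pointwise Li--Yau bound $F\le 0$ and handle the equality case through the evolution identity of Lemma~\ref{sLY lem2}(1). The paper is terse: it just says that one ``can not use Theorem~\ref{linear thm} directly'' on a noncompact $M$ and must ``extend'' it via the techniques of the \emph{Claim} in Section~\ref{sec LYHP2}, and that equality follows from ``the vanishing of the first variation formula.'' What the paper is flagging is that the displayed formula for $d\tilde N/dt$ is itself obtained by integration by parts and mass conservation $\int_M H\,d\mu=1$; on a noncompact manifold these require growth control, supplied by the Gaussian bounds collected in the Appendix. Your proof treats the displayed formula for $d\tilde N/dt$ as given, which is fine insofar as you are invoking the derivation preceding the proposition, but you should at least note that the manipulation (integration by parts, $\int u_t\,d\mu=0$) is not automatic off a closed manifold --- this is exactly the gap the paper's remark is warning about. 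Your equality-case argument is correct, but be explicit that the passage from $F(\cdot,t_0)\equiv 0$ to $(\Delta-\partial_t)F(\cdot,t_0)\equiv 0$ uses that $F\le 0$ for all $t$, so $t_0$ is a maximum in time and $\partial_t F(\cdot,t_0)=0$; pointwise vanishing at a single instant alone does not give $\partial_t F=0$.

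For part (2), the paper simply asserts the limit without proof. Your argument via the Minakshisundaram--Pleijel expansion, the second-moment computation (\ref{distancesquare}), and $H_0(x,x)=1$ is the natural one and is sound, and you correctly identify the real technical burden: the on-diagonal expansion is only local, so the tails of $\int \frac{d^2}{t}H\,d\mu$ and $\int H\log H_0\,d\mu$ must be controlled by the Gaussian upper bounds available under $Ricci\ge -k$, which is what the Claim machinery in Section~\ref{sec LYHP2} provides. In that respect your part (2) is more complete than the paper's. Overall this is the same approach the paper intends, written out in more detail, with the above caveat about integration by parts deserving an explicit sentence.
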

\begin{proof}
The monotonicity is a simple consequence of the generalized Li-Yau gradient estimates for heat kernels on complete manifolds with $Ricci(M)\ge -k$. Since the manifold may be noncompact, one can not use Theorem \ref{linear thm} directly. But for heat kernels, one can easily extend Theorem \ref{linear thm} to noncompact manifold by using the techniques we developed to prove the {\em claim} in Section \ref{sec LYHP2}. The equality case is from the vanishing of the first variation formula.\\
\end{proof}

The study of relations between pointwise differential Harnack inequality and monotonicity of entropy functionals for Ricci flow equations and heat equations is an important and very active field. As we have revealed in this paper, for both equations, Ricci soliton plays important role. See Entropy formulas for Ricci flow in Perelman's original work \cite{P1}, and others, e.g., \cite{FIN}, \cite{L1}.

\section{\bf Appendix}
We summarize some known results about heat kernels on manifolds in this Appendix.
\\

\noindent{\em {\bf Theorem A.1.} \label{Prop3.1}
Let $M^n$ be a smooth, complete Riemannian manifold. Let $C(M)\subset M\times M$ be the set of pairs of points $(x, y)$ such that $y \in Cut(x)$. Let $F(t,x,y)$ be the positive fundamental solution of the heat equation $\partial_t u(t,x)=\Delta u(t,x)$, define
$$E_t(x,y)=-2t\ln F(t,x,y),\hspace{1cm} E(x,y)=\frac{1}{2}{\rm dist}^2(x,y).$$
Then there are smooth functions $H_i(x, y)$ defined on $(M \times M)\setminus C(M)$ such that the asymptotic expansion
\begin{equation}\label{Asymptotic}
F(t,x,y)\sim \Big(\frac{1}{4\pi t}\Big)^{n/2}e^{-\frac{dist^2(x,y)}{4t}}\sum_{i=0}^{\infty}H_i(x,y) t^i
\end{equation}
holds uniformly as $t \rightarrow 0$ on compact subsets of $(M \times M)\setminus C(M)$. Further, if $y = \exp_x(Y )$, then $H_0(x, y)$ is given by the reciprocal of the square root of the Jacobian of $\exp_x$ at $Y$.

}

Furthermore, the following estimates on logarithmic derivatives of the heat kernel on $M$ are known:\\

\vspace{2mm}
\noindent{\em {\bf Theorem A.2.} \label{ABCD}
\begin{enumerate}
  \item {\bf(Varadhan \cite{V}, Cheng-Li-Yau \cite{CLY})} On any compact subsets of $M \times M$,
$$\lim_{t\rightarrow 0}E_t(x,y)=E(x,y)\; uniformly ;$$
  \item {\bf(Malliavin and Stroock \cite{MS}, Stroock and Turetsky \cite{ST1})}  On any compact subsets of $(M \times M)\setminus C(M)$,
$$\lim_{t\rightarrow 0}\nabla^m E_t(x,y)=\nabla^m E(x,y) \; uniformly;$$
  \item {\bf(Stroock and Turetsky \cite{ST2}, Hsu, Elton \cite{Hsu})} There are upper bounds for derivatives of the heat kernel on any closed manifold $M$ as
$$\big|\nabla^m F(t,x,y)\big|\leq D_m\Big(\frac{{\bf dist}(x,y)}{t}+\frac{1}{\sqrt{t}}\Big)^m F(t,x,y) $$
where the $D_m$ are some constants depending only on $M$.
  \item {\bf(Neel \cite{Ne})} For any $A \in T_y M$, we have

\begin{equation}\label{}
\left\{
\begin{array}{rll}
-|A| dist(x, y)&\leq \liminf_{t\rightarrow 0}\nabla_A E_t(x, y)\\
&\leq \limsup_{t\rightarrow 0}\nabla_A E_t(x, y)\leq |A| dist(x, y)\\
& and\\
-|A|^2 dist^2(x, y)&\leq \liminf_{t\rightarrow 0}t\nabla^2_{A,A} E_t(x, y)\\
&\leq \limsup_{t\rightarrow 0}t\nabla^2_{A,A} E_t(x, y)\leq 0
\end{array}
\right.\nonumber
\end{equation}
hold for any  $(x,y)\in M \times M$.
\end{enumerate}

}

\vspace{3mm}

For heat kernels on hyperbolic space, Davies and Mandouvalos have the following estimates.\\

\noindent{\em {\bf Theorem A.3. (Davies-Mandouvalos \cite{DM})}\label{DM}
Let $F^K(t,x,y)=F^K(t,{\bf dist}(x,y))$ be the heat kernel of $\Delta$ on $M^K$, the space form with constant sectional curvature $-K \leq 0$. Then
$$c(n)^{-1}h(t,{\bf dist}(x,y))\leq F^K(t,x,y) \leq c(n)h(t,{\bf dist}(x,y)) $$
where $c(n)$ depends only on dimension $n$ and
\begin{eqnarray}
h(t,r)&=&(4\pi t)^{-n/2}\exp\Big(-\frac{r^2}{4t}-\frac{(n-1)^2Kt}{4}-\frac{(n-1)\sqrt{K}r}{2}\Big)\label{bound-SF}\\
&&\times\Big(1+\sqrt{K}r+Kt\Big)^{\frac{n-1}{2}-1}(1+\sqrt{K}r). \nonumber
\end{eqnarray}
}

Another very useful estimate is the heat kernel comparison theorem of Cheeger and Yau.\\

\noindent{\em {\bf Theorem A.4. (Cheeger-Yau \cite{CY})}\label{cheeger-yau}
$$ F(t,x,y)\geq F^K(t,x,y),\; \forall\; Ric(M)\geq -(n-1)K.$$
We have the following lower bound estimates for heat kernel on  $M$:
\begin{eqnarray}
F(t,x,y)\geq c(n)^{-1}h(t,{\bf dist}(x,y)) \label{Lowerbound-HK}
\end{eqnarray}
where $h(t,r)$ as in (\ref{bound-SF}).
}

The following theorem is from \cite{SY} page $167$, Corollary 2.\\

\noindent{\em {\bf Theorem A.5. (Li-Yau \cite{SY})}\label{Li-yau cited corollary}
Let $H(x,y,t)$ be the heat kernel of a complete Riemannian manifold $M$. For any $\rho>0$, $T>0$, set
\begin{equation}
F(y,t)=\Dint_{M\setminus B_x(\rho)}H(x,\xi,T)H(\xi,y,t)d\xi.
\end{equation}
Then for any $\delta>0$, and $R>0$,
\earr
&\Dint_{B_x(R)}F^2(y,(1+\delta)T)dy\\
\le&\exp\Big( \frac{R^2}{2\delta T}\Big)\cdot \exp\Big(\frac{-\rho^2}{2(1+2\delta)T}\Big)\cdot\Dint_{M\setminus B_x(\rho)}H^2(x,\xi,T)d\xi.\\
\eearr
Moreover, if $\rho=0$, i.e.
\begin{equation}
F(y,t)=\Dint_{M}H(x,\xi,T)H(\xi,y,t)d\xi,
\end{equation}
then for any $\delta>0$, $T>0$, and $R>0$, we have
\earr
&\Dint_{B_x(R)}F^2(y,(1+\delta)T)dy\le\exp\Big( \frac{R^2}{2\delta T}\Big)F(x,T).\\
\eearr

}

\section{\bf Acknowledgement}
The authors would like to thank professor Pengfei Guan for encouragement and many informative discussions. Especially the present version of the local estimates was suggested and stimulated in his working seminar. Both authors thank CRM and McGill University for finacial support during the visit.

\end{document}